\newtheorem{lemma}{Lemma}
\newtheorem{theorem}{Theorem}
\newtheorem{proposition}{Proposition}
\newtheorem{corollary}{Corollary}
\newtheorem{conjecture}{Conjecture}
\tikzset{
dot/.style = {circle, fill, minimum size=#1,
              inner sep=0pt, outer sep=0pt},
dot/.default = 4pt 
}
\newcommand{\bound}{\useasboundingbox (-2,-2) -- (2,2);}
\newcommand{\Ac}{\mathcal{A}}
\newcommand{\Sc}{\mathcal{S}}
\renewcommand{\vert}[2]{\coordinate (#1) at (#2); \node[dot] at (#2) {};}
\newcommand{\labvert}[3]{\coordinate (#1) at (#2); \node[dot] at (#2) {}; \node[#3] at (#2) {#1};}
\newcommand{\edge}[2]{\draw (#1) -- (#2);}
\DeclarePairedDelimiter{\ceil}{\lceil}{\rceil}
\DeclarePairedDelimiter{\abs}{\lvert}{\rvert}
\DeclareMathOperator{\dom}{dom}
\title{Hadwiger's Conjecture with Certain Forbidden Induced Subgraphs}
\author{Daniel Carter}
\date{}
\begin{document}

\maketitle

\begin{abstract}
    We prove that $\{\overline{K_3}, H\}$-free graphs are not counterexamples to Hadwiger's Conjecture, where $H$ is any one of 33 graphs on seven, eight, or nine vertices, or $H=K_8$. This improves on past results of Plummer-Stiebitz-Toft, Kriesell, and Bosse. The proofs are mostly computer-assisted.
\end{abstract}

\section{Introduction}

All graphs in this paper are finite and simple. Let $h(G)$ be the maximum $t$ such that $K_t$ is a minor of $G$, $\chi(G)$ the chromatic number of $G$, and $\omega(G)$ and $\alpha(G)$ respectively the clique number and independence number of $G$. \textit{Hadwiger's Conjecture} is:

\begin{conjecture}
For all graphs $G$, $h(G)\ge \chi(G)$.
\end{conjecture}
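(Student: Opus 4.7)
This is Hadwiger's Conjecture, one of the most famous open problems in graph theory, and I do not attempt a complete proof here, since none is known; the paper itself, as the abstract makes explicit, establishes only restricted cases. The standard framework for attacking the conjecture is by minimum counterexample, and since the restricted proofs later in the paper will plausibly specialize it, let me sketch that framework.

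Suppose $G$ is a minimum counterexample with $\chi(G) = t+1$ and $h(G) \le t$. Minimality forces $G$ to be connected and $(t+1)$-chromatic-critical, so in particular $\delta(G) \ge t$. The strategy then splits: either one locates a small vertex cut whose pieces contract to a strictly smaller counterexample, contradicting minimality, or one exhibits enough density or local structure to build a $K_{t+1}$-minor by hand. The cases $\chi(G) \le 4$ are elementary, and $\chi(G) = 5$ (via Wagner's equivalence) and $\chi(G) = 6$ (Robertson-Seymour-Thomas) follow from the Four Color Theorem; no case $\chi(G) \ge 7$ has ever been settled in full generality.

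The main obstacle, and the reason the conjecture has resisted for over 80 years, is the sparse high-chromatic regime. Critical graphs with $\chi(G) = t+1$ can have as few as roughly $\tfrac{t}{2}\, n$ edges, well below the $\Theta(t\sqrt{\log t})\, n$ density that Kostochka-Thomason shows forces a $K_{t+1}$-minor, so pure density arguments fail; meanwhile, chromatic-critical graphs are not structured enough to admit a decomposition theorem closing the gap. The best general bounds, due to Norin-Postle-Song and subsequent refinements, still leave $\chi(G)$ exceeding $h(G)$ by more than a constant factor. The present paper sidesteps this barrier by imposing $\alpha(G) \le 2$ (the $\overline{K_3}$-free hypothesis), which forces $\chi(G) \ge \lceil n/2 \rceil$ and, combined with a further $H$-free assumption for small $H$, makes the structural problem finite and amenable to the computer-assisted case analysis the abstract describes.
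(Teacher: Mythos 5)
The statement is Hadwiger's Conjecture itself, which the paper states as an open conjecture and does not prove; your response correctly identifies this and, like the paper, offers no proof, so there is nothing to compare beyond noting that your surrounding remarks (the $\chi\le 6$ cases, the density barrier, and the role of the $\alpha(G)\le 2$ hypothesis) are accurate context rather than an argument. No gap, because no proof is expected or possible here.
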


For a graph $H$, we say $G$ is \textit{$H$-free} if no induced subgraph of $G$ is isomorphic to $H$. Likewise for a set of graphs $S$, we say $G$ is \textit{$S$-free} if $G$ is $H$-free for all $H$ in $S$. We denote by HC-$S$ the statement that Hadwiger's Conjecture holds for all $S$-free graphs.

There is much interest in the specific case of Hadwiger's Conjecture for graphs with no stable set of size 3, i.e. $\overline{K_3}$-free graphs. Paul Seymour writes about this case:
\begin{quote}
    This seems to me to be an excellent place to look for a counterexample. My own belief is, if it is true for graphs with stability number two then it is probably true in general. \cite{seymoursurvey}
\end{quote}

Previously, it was proved:

\begin{theorem}\label{thm:past}
HC-$\{\overline{K_3},H\}$ holds where $H$ is any graph on five vertices or the graphs $H_6$ or $H_7$ in Figure~\ref{fig:h6h7} \cite{pst, kriesell}, or the wheel graph on six vertices $W_5$, $\overline{K_{1,5}}$, or $K_7$ \cite{bosse}.
\end{theorem}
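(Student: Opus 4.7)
The plan is to consider a vertex-minimum counterexample $G$ to HC within the class of $\{\overline{K_3},H\}$-free graphs and derive a contradiction. Since $G$ is $\overline{K_3}$-free, $\alpha(G)\le 2$, so $\chi(G)\ge\lceil |V(G)|/2\rceil$, and the task reduces to exhibiting a $K_{\chi(G)}$-minor in $G$. The natural template comes from a maximum matching $M$: each edge of $M$ is a candidate branch set of size two, and at most one or two unmatched vertices (since $\alpha(G)\le 2$) can serve as singleton branch sets. If every two edges of $M$ are joined by an edge of $G$, this immediately yields a $K_{\lceil |V(G)|/2\rceil}$-minor.

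Next I would extract structural information from minimality: $G$ is vertex-critical for $\chi$ (so $\delta(G)\ge\chi(G)-1$), is $2$-connected, and for every edge $uv$ one has $h(G/uv)\le h(G)$, since otherwise $G/uv$ would be a smaller counterexample. These properties sharply limit how two matching edges can fail to be mutually connected. The second forbidden subgraph $H$ is then used to rule out the remaining obstructions. For sparse $H$, one shows that $\chi(G)$ is bounded by a small constant, reducing to finitely many cases; for dense $H$ (e.g.\ $H=K_7$) one combines $\omega(G)\le |V(H)|-1$ with $\alpha(G)\le 2$ via Ramsey-type estimates to bound $\chi(G)$, and then constructs the clique minor directly from the matching.

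For the specific cases in Theorem~\ref{thm:past}, the strategy is executed by case analysis on an induced or potential induced copy of $H$ in $G$. For the five-vertex cases of Plummer--Stiebitz--Toft there are only a handful of isomorphism classes, each disposed of in a few lines. For $H_6,H_7$ (Kriesell) and for $W_5,\overline{K_{1,5}},K_7$ (Bosse) the analysis is longer: one repeatedly picks a small induced subgraph, examines the neighborhoods of its vertices under the constraints $\alpha(G)\le 2$ and $H$-freeness, and either finds an improving structure (a large clique minor, a dominated vertex, or a safe contraction) or branches further.

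The main obstacle I would expect is the combinatorial explosion of local configurations: the number of $\{\overline{K_3},H\}$-free graphs that can realize a neighborhood or near-neighborhood grows quickly with $|V(H)|$, and beyond roughly seven vertices the bookkeeping becomes infeasible to carry out by hand. This is precisely the motivation for the computer-assisted arguments in the present paper, which mechanize the same style of case analysis to reach the larger list of forbidden $H$ announced in the abstract.
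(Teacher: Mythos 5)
This statement is not proved in the paper at all: it is an aggregation of prior results, cited to Plummer--Stiebitz--Toft, Kriesell, and Bosse, and the paper's ``proof'' is simply the citation. So the only question is whether your proposal stands on its own as a proof, and it does not. What you have written is a strategy outline: you describe a template (matching edges as branch sets, unmatched vertices as singletons), list structural consequences of minimality, and then say that the remaining obstructions are ``ruled out'' by case analysis on $H$ --- but no single case of the theorem is actually carried out. The easy case (every two matching edges joined) is exactly the case that never needs the hypothesis on $H$; the entire content of each of the cited proofs lives in the situation where matching edges are \emph{not} pairwise joined, and for that you only say one ``examines the neighborhoods\dots or branches further.'' Your own closing paragraph concedes that the bookkeeping is not done. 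A proof that defers all of its content to ``the analysis is longer'' is not a proof.

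There are also two concrete logical problems in the parts you do state. First, the minimality consequence ``$h(G/uv)\le h(G)$, since otherwise $G/uv$ would be a smaller counterexample'' is vacuous: $h(G/uv)\le h(G)$ holds for every graph because $G/uv$ is a minor of $G$; it carries no information and is not what minimality gives you. Second, the implicit ``safe contraction'' step is unsound in this setting: the class of $\{\overline{K_3},H\}$-free graphs is closed under taking induced subgraphs but \emph{not} under edge contraction (contracting an edge can create an induced copy of $H$), so $G/uv$ need not lie in the class and minimality tells you nothing about it. This is precisely why the literature (and this paper) works with vertex deletion, dominating edges, and clique covers (Lemmas~\ref{lem:dominating} and~\ref{lem:fourclique}) rather than contractions. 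If you want to reconstruct Theorem~\ref{thm:past}, the honest route is to follow those tools case by case for each listed $H$, or simply to cite the three source papers as this one does.
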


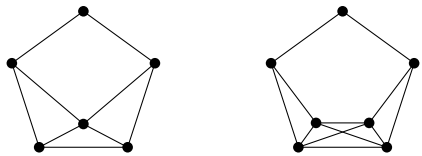
\begin{figure}[htb!]
    \centering
    \begin{tikzpicture}
    \vert{c1}{0,1}
    \vert{c2}{-0.951,0.309}
    \vert{c3}{-0.588,-0.809}
    \vert{c4}{0.588,-0.809}
    \vert{c5}{0.951,0.309}
    \edge{c1}{c2}
    \edge{c2}{c3}
    \edge{c3}{c4}
    \edge{c4}{c5}
    \edge{c5}{c1}
    
    \vert{s1}{0,-0.5}
    \edge{s1}{c2}
    \edge{s1}{c3}
    \edge{s1}{c4}
    \edge{s1}{c5}
    \end{tikzpicture}\qquad\qquad
    \begin{tikzpicture}
    \vert{c1}{0,1}
    \vert{c2}{-0.951,0.309}
    \vert{c3}{-0.588,-0.809}
    \vert{c4}{0.588,-0.809}
    \vert{c5}{0.951,0.309}
    \edge{c1}{c2}
    \edge{c2}{c3}
    \edge{c3}{c4}
    \edge{c4}{c5}
    \edge{c5}{c1}
    
    \vert{x3}{-0.353,-0.485}
    \edge{x3}{c2}
    \edge{x3}{c3}
    \edge{x3}{c4}
    
    \vert{x4}{0.353,-0.485}
    \edge{x4}{c3}
    \edge{x4}{c4}
    \edge{x4}{c5}
    \edge{x3}{x4}
    \end{tikzpicture}
    \caption{The graphs $H_6$ (left) and $H_7$ (right).}
    \label{fig:h6h7}
\end{figure}

We improve these results using three main facts:

\begin{lemma}\label{lem:dominating}\cite{pst}
Any minimal counterexample to HC-$\{\overline{K_3}\}$ has no dominating edges.
\end{lemma}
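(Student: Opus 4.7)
The plan is to derive a contradiction by contracting the dominating edge. Suppose $G$ is a minimal counterexample to HC-$\{\overline{K_3}\}$ with dominating edge $uv$, and let $G' = G/uv$ with contracted vertex $w$. Any independent triple of $G'$ either avoids $w$---and is then an independent triple of $G$, contradicting $\alpha(G) \le 2$---or contains $w$, in which case swapping $w$ for $u$ or $v$ yields an independent triple in $G$; so $G'$ is $\overline{K_3}$-free. Since $|V(G')| < |V(G)|$, minimality gives $\chi(G') \le h(G')$. One also has $\chi(G) \le \chi(G') + 1$: extend any proper $\chi(G')$-coloring to $G$ by assigning $u$ the color of $w$ and $v$ a fresh color, which is proper because every $G$-neighbor of $u$ other than $v$ is a $G'$-neighbor of $w$.

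It therefore suffices to show that the dominating property forces $h(G) \ge h(G') + 1$, for then $\chi(G) \le \chi(G') + 1 \le h(G') + 1 \le h(G)$, contradicting that $G$ is a counterexample. To produce a $K_{t+1}$ minor of $G$ with $t = h(G')$, take a $K_t$ minor of $G'$ with branch sets $T_1, \ldots, T_t$ and $w \in T_1$, and lift it to a $K_t$ minor of $G$ with branches $\tilde T_1 = (T_1 \setminus \{w\}) \cup \{u,v\}$, $T_2, \ldots, T_t$ (the lifted $\tilde T_1$ is connected because $uv \in E(G)$). The aim is to split $\tilde T_1$ into two connected pieces $U \ni u$ and $W \ni v$, joined by the edge $uv$ and each adjacent in $G$ to every remaining $T_i$; then $U, W, T_2, \ldots, T_t$ is the required $K_{t+1}$ minor. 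The dominating hypothesis guarantees that each vertex of $T_1 \setminus \{w\}$ is adjacent to $u$ or to $v$, which makes it natural to route such vertices into two stars centered at $u$ and $v$.

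The main obstacle, and the technical heart of the argument, is ensuring that every $T_i$ with $i \ge 2$ is adjacent in $G$ to both $u$ and $v$. A \emph{bad} $T_i$ would be one lying entirely in $N(u) \setminus N(v)$ or in $N(v) \setminus N(u)$. Using the dominating hypothesis, these two sets are exactly $V(G) \setminus N[v]$ and $V(G) \setminus N[u]$, and $\alpha(G) \le 2$ forces both to be cliques, since any two non-neighbors of a fixed vertex would otherwise combine with that vertex into an independent triple. Hence any bad $T_i$ can be taken to be a single vertex inside such a clique, and the plan is to eliminate it by a rerouting argument based on the clique structure: the bad singleton can be absorbed into $\tilde T_1$ via its unique neighbor among $\{u,v\}$, and a compensating branch set can be manufactured from the common neighborhood $D := N(u) \cap N(v) \setminus \{u,v\}$ or from the boundary of the clique. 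Choosing the initial $K_t$ minor of $G'$ to minimize the number of bad branches, and verifying that these rerouting operations terminate with a $K_{t+1}$ minor of $G$, is where the real work lies.
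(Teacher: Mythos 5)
The paper does not prove this lemma; it imports it from \cite{pst}, so there is no in-paper argument to compare against and your attempt has to stand on its own. The framing of your reduction is reasonable and the easy parts are correct: $G/uv$ is $\overline{K_3}$-free (indeed the contracted vertex $w$ is adjacent to everything, precisely because $uv$ dominates), and $\chi(G)\le\chi(G/uv)+1$ by the coloring extension you describe. But there are two genuine gaps. The first is definitional but matters for how the lemma is used: the paper defines a minimal counterexample as one with no proper \emph{induced subgraph} that is a counterexample, and $G/uv$ is a contraction, not an induced subgraph of $G$ (it is $G-u-v$ plus a dominating vertex, which need not occur inside $G$). So ``minimality gives $\chi(G')\le h(G')$'' is not licensed. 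Your argument, if completed, would establish the statement for vertex-minimum counterexamples, which is a different statement; the paper's derivation of Algorithm~1's correctness relies on the induced-subgraph version, since an arbitrary counterexample contains an induced-subgraph-minimal one but need not contain a vertex-minimum one.

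The second and more serious gap is that the core combinatorial content of the lemma --- that $h(G)\ge h(G/uv)+1$ when $uv$ is dominating and $\alpha(G)\le 2$ --- is never actually proved. You correctly identify the obstruction (branch sets contained in $N(u)\setminus N[v]$ or $N(v)\setminus N[u]$, each of which is a clique), but the proposed repair is only gestured at, and each of its ingredients needs justification that is not supplied: replacing a bad branch set by a single vertex of the clique can destroy its adjacency to other branch sets, so ``can be taken to be a single vertex'' is not automatic; the ``compensating branch set'' manufactured from $N(u)\cap N(v)\setminus\{u,v\}$ may not exist (that set can be empty), may consist of vertices already used by other branch sets, and even if available need not be adjacent to all remaining branch sets; and no argument is given that minimizing the number of bad branches, or iterating the rerouting, terminates in a configuration where the split of $\tilde T_1$ into $U$ and $W$ succeeds. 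Your own closing sentence concedes that this ``is where the real work lies,'' which is an accurate self-assessment: what you have is a plan for the hard step, not a proof of it.
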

Here, a \textit{dominating edge} is an edge $uv$ such that all vertices are adjacent to $u$ or $v$ (or both), and by \textit{minimal counterexample} we mean that no proper induced subgraph is also a counterexample.

\begin{lemma}\label{lem:fourclique}\cite{packingseagulls}
Any $\overline{K_3}$-free graph with
\[ \omega(G) \ge \begin{cases}
\ceil{\frac{\abs{G}}{4}} & \text{if $\abs{G}$ is even,} \\
\ceil{\frac{\abs{G}+3}{4}} & \text{if $\abs{G}$ is odd}
\end{cases} \]
has $h(G)\ge \chi(G)$.
\end{lemma}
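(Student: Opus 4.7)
Set $n = \abs{G}$. Since $\alpha(G) \le 2$, every color class of $G$ has size at most two, so
\[ \chi(G) = n - \nu(\overline G), \]
where $\nu(\overline G)$ is the matching number of the complement: an optimal coloring is given by a maximum matching $M$ of $\overline G$ (each edge of $M$ being a color class of size two) together with singleton classes on the unmatched vertices, which form a clique of $G$ by maximality of $M$.

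The core combinatorial tool is the dominance of seagulls. A \emph{seagull} is an induced $P_3$, say $\{u, x, v\}$ with $uv \notin E(G)$. In an $\overline{K_3}$-free graph, $N(u) \cup N(v) \supseteq V(G) \setminus \{u, x, v\}$, since any $w$ outside the seagull must be adjacent to $u$ or $v$, lest $\{u, v, w\}$ be an independent triple. Hence contracting a seagull yields a vertex adjacent to every vertex outside it, and for any clique $C$ disjoint from pairwise vertex-disjoint seagulls $S_1, \ldots, S_s$, the branch sets $\{\{c\} : c \in C\} \cup \{S_1, \ldots, S_s\}$ witness a $K_{\abs{C} + s}$-minor. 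The plan is then to take a maximum clique $C$ of size $\omega$ and produce $\chi(G) - \omega$ disjoint seagulls in $V(G) \setminus C$.

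Fitting $\chi(G) - \omega$ seagulls in $V(G) \setminus C$ requires $3(\chi(G) - \omega) \le n - \omega$, i.e., $\chi(G) \le (n + 2\omega)/3$. The hypothesis $\omega \ge \ceil{n/4}$ (respectively $\ceil{(n+3)/4}$ for odd $n$) is calibrated exactly so that $(n + 2\omega)/3 \ge \ceil{n/2}$, matching the minimum possible value of $\chi(G)$ for an $\overline{K_3}$-free graph. When $\chi(G) = \ceil{n/2}$---equivalently, when $\overline G$ has a (near-)perfect matching---a near-greedy extraction of seagull middles from common neighbors succeeds directly.

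The main obstacle is the residual case $\chi(G) > \ceil{n/2}$, where $\chi(G)$ may climb as high as $(n+\omega)/2$ (using the fact that the unmatched vertices of $\overline G$ form a clique in $G$), breaking the simple packing arithmetic. This is the substantive content of the packing seagulls theorem of \cite{packingseagulls}: a Berge-type min-max result whose proof uses augmenting arguments that exchange seagull middles with other structural pieces---sometimes reaching into $C$---to convert structural slack into the target number $\chi(G) - \omega$ of disjoint seagulls. The delicate case analysis, relying repeatedly on the $\alpha \le 2$ hypothesis, is the substantive part of the proof.
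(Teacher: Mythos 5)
The paper offers no proof of this lemma: it is imported verbatim from \cite{packingseagulls} and used as a black box. Your proposal is an accurate reconstruction of the strategy behind the cited result --- the identity $\chi(G)=\abs{G}-\nu(\overline G)$ for $\alpha\le 2$, the observation that an induced $P_3$ contracts to a vertex dominating everything outside it, and the reduction to packing $\chi(G)-\omega(G)$ disjoint seagulls avoiding a maximum clique --- and your arithmetic calibrating the hypothesis against the packing space is correct. However, as you concede yourself, the entire substantive content (the min-max theorem guaranteeing that many disjoint seagulls, including the case $\chi(G)>\ceil{\abs{G}/2}$ and even the ``near-greedy extraction'' in the balanced case) is deferred to \cite{packingseagulls}, so this is an outline of why the lemma follows from that machinery rather than a proof of it. Since the paper itself treats the lemma as an external citation, there is no discrepancy with the paper's treatment, but the proposal should not be mistaken for a self-contained argument.
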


\begin{lemma}\label{lem:conings}
$\{K_3, H\}$-free graphs have chromatic number at most $k$, where $(H, k)$ is any of the pairs $(K_1, 0)$, $(K_2, 1)$, $(P_4, 2)$, or $(T_1, 3)$, $(T_2, 3)$, or $(T_3, 3)$ \cite{colorablepairs},\footnote{In fact, $(T_1', 3)$ is also such a pair where $T_1'$ is $T_1$ with a second leaf added to the third vertex in the 5-path \cite{trianglefork}. The author was not aware of this stronger result until after finding the main theorem of this paper. Replacing $T_1$ by $T_1'$ in the relevant algorithm could potentially lead to slightly better results.} where $T_i$ are shown in Figure~\ref{fig:trees}.
\end{lemma}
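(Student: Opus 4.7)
The plan is to dispose of the first three pairs immediately and then attack the three tree cases with a minimum counterexample argument. For $(K_1, 0)$, any $K_1$-free graph is empty; for $(K_2, 1)$, any $K_2$-free graph is edgeless. For $(P_4, 2)$, every $P_4$-free graph is a cograph and hence perfect by Seinsche's theorem, so $\chi(G) = \omega(G) \le 2$ whenever $G$ is triangle-free.

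For each pair $(T_i, 3)$ I would argue by minimum counterexample. Let $G$ be a $\{K_3, T_i\}$-free graph with $\chi(G) \ge 4$ of smallest order. Then $G$ is $4$-vertex-critical, so $G$ is connected, $\delta(G) \ge 3$, and no proper induced subgraph of $G$ is $4$-chromatic. The core tool is a BFS rooted at a carefully chosen vertex $v$: set $L_k = \{u : d_G(u,v) = k\}$. Triangle-freeness forces each $L_k$ to be independent, since an edge inside $L_k$ together with any common neighbor in $L_{k-1}$ would create a $K_3$; likewise each vertex in $L_k$ has no two neighbors in $L_{k-1}$ that are themselves adjacent.

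With this scaffolding, the proof for each $T_i$ becomes a case analysis that attempts to build an induced copy of $T_i$ inside the BFS tree. Starting from a chosen edge between consecutive layers, one walks outward along neighbors dictated by the shape of $T_i$, using $\delta(G) \ge 3$ to supply enough branches and triangle-freeness to eliminate short chords. Whenever a branch fails, one obtains a forbidden local configuration, typically a short induced cycle or a prescribed bipartite piece, from which $4$-criticality should be contradicted by extending a $3$-coloring of a proper induced subgraph back to $G$.

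The main obstacle is the case analysis itself: the argument cannot be uniform across $T_1$, $T_2$, $T_3$, since each tree has a different diameter and branching pattern, so each demands its own chord-control lemma governing which edges between consecutive BFS layers may occur. This is also why the footnote's sharpening from $T_1$ to $T_1'$ is delicate: the added leaf strengthens the obstructions one may grow inside the BFS tree, but only after the corresponding chord analysis is redone. I expect the hardest subcase to be whichever $T_i$ has the smallest diameter, since then fewer layers are available in which to locate an induced copy of $T_i$ that avoids chords, and one must lean more heavily on $\delta(G) \ge 3$ to force the required branching.
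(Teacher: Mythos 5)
Your handling of the first three pairs is fine: $K_1$-free graphs are vertexless, $K_2$-free graphs are edgeless, and the cograph/perfection argument correctly gives $\chi\le 2$ for triangle-free $P_4$-free graphs. Note, though, that the paper does not prove this lemma at all -- it is imported wholesale from the literature (\cite{colorablepairs} for the $T_i$ cases, \cite{trianglefork} for the strengthening to $T_1'$), so the only question is whether your sketch would actually establish the three tree cases from scratch. It would not, for two reasons.

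First, your scaffolding contains a false claim: BFS layers of a triangle-free graph are \emph{not} independent. Your justification -- that an edge inside $L_k$ together with a common neighbor in $L_{k-1}$ gives a $K_3$ -- only shows that two adjacent vertices of $L_k$ have no \emph{common} neighbor in the previous layer; it does not produce such a common neighbor. The $5$-cycle rooted at any vertex already has an edge inside $L_2$. Only $L_1=N(v)$ is forced to be independent by triangle-freeness. Since the entire case analysis is to be carried out "inside the BFS tree" using this independence, the plan as stated rests on a broken foundation. Second, and more fundamentally, the substantive content of the lemma -- the chord-control lemmas, the construction of an induced $T_i$, and the recoloring argument from $4$-criticality -- is never executed; phrases like "should be contradicted" and "I expect the hardest subcase to be" signal that the $(T_i,3)$ cases remain a research plan rather than a proof. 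These results (due to Randerath and coauthors) require genuinely intricate arguments about $4$-critical triangle-free graphs, and nothing in the proposal substitutes for them. As written, the proposal proves only the trivial pairs.
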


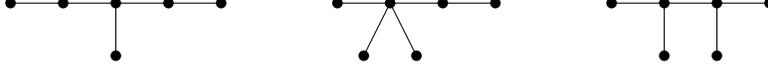
\begin{figure}
    \centering
    \begin{tikzpicture}[scale=0.7]
    \vert{1}{-2,0.5};
    \vert{2}{-1,0.5};
    \vert{3}{0,0.5};
    \vert{4}{1,0.5};
    \vert{5}{2,0.5};
    \vert{6}{0,-0.5};
    \edge{1}{2};
    \edge{2}{3};
    \edge{3}{4};
    \edge{4}{5};
    \edge{3}{6};
    \end{tikzpicture}\qquad\qquad
    \begin{tikzpicture}[scale=0.7]
    \vert{1}{-1.5,0.5};
    \vert{2}{-0.5,0.5};
    \vert{3}{0.5,0.5};
    \vert{4}{1.5,0.5};
    \vert{5}{-1,-0.5};
    \vert{6}{-0,-0.5};
    \edge{1}{2};
    \edge{2}{3};
    \edge{3}{4};
    \edge{2}{5};
    \edge{2}{6};
    \end{tikzpicture}\qquad\qquad
    \begin{tikzpicture}[scale=0.7]
    \vert{1}{-1.5,0.5};
    \vert{2}{-0.5,0.5};
    \vert{3}{0.5,0.5};
    \vert{4}{1.5,0.5};
    \vert{5}{-0.5,-0.5};
    \vert{6}{0.5,-0.5};
    \edge{1}{2};
    \edge{2}{3};
    \edge{3}{4};
    \edge{2}{5};
    \edge{3}{6};
    \end{tikzpicture}
    \caption{The trees $T_1$, $T_2$, and $T_3$ (left-to-right) associated with Lemma~\ref{lem:conings}.}
    \label{fig:trees}
\end{figure}

By starting from a graph that is known to be an induced subgraph of any counterexample to HC-$\{\overline{K_3}\}$ and iteratively applying these lemmas, it is sometimes possible to find a new graph that must be an induced subgraph of any counterexample to HC-$\{\overline{K_3}\}$. Specifically, Lemma~\ref{lem:dominating} implies, given a dominating edge $uv$, the existence of a vertex adjacent to neither $u$ or $v$, which can attach onto the graph in some number of ways. If there are no dominating edges, we can use the other two lemmas to get the existence of one or more vertices that attach onto the graph in such a way as to introduce new dominating edges.

In particular, we prove:

\begin{theorem}\label{thm:main}
HC-$\{\overline{K_3}, H\}$ holds where $H\in\{H_1',H_2',\dots,H_{33}'\}$ is any of the graphs in Figure~\ref{fig:main}.
\end{theorem}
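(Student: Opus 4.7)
The plan is a proof by contradiction via a computer-assisted search. For each $H \in \{H_1',\dots,H_{33}'\}$, suppose $G$ is a minimal counterexample to HC-$\{\overline{K_3}, H\}$. I aim to show that $G$ must contain $H$ as an induced subgraph, contradicting $H$-freeness. The search state is a collection of \emph{partial configurations}: labelled induced subgraphs of $G$ in which adjacency among labelled vertices is fully specified. Each configuration represents a case that must be ruled out, and the proof is complete for $H$ when every configuration in the tree either visibly contains $H$, contains a $\overline{K_3}$, or violates the clique bound of Lemma~\ref{lem:fourclique}.

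To initialize, I take a seed configuration $G_0$ known by Theorem~\ref{thm:past} to be an induced subgraph of every minimal counterexample to HC-$\{\overline{K_3}\}$; in particular, $G$ must contain $K_7$ and cannot induce any of the small graphs excluded in Theorem~\ref{thm:past}, which sharply constrains how new vertices attach. I then apply two extension rules. First, if a configuration $C$ admits a dominating edge $uv$, Lemma~\ref{lem:dominating} provides a vertex $w \in V(G) \setminus V(C)$ non-adjacent to both $u$ and $v$; since $\alpha(G) \le 2$, $w$ must be adjacent to every $x \in V(C) \setminus (N(u) \cup N(v))$, so I branch only over the attachments of $w$ to $N(u) \cup N(v)$. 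Second, if $C$ has no dominating edge, I exploit the fact that non-neighborhoods of vertices in $G$ are cliques (because $\alpha(G) \le 2$), so that suitable complement substructures are triangle-free; applying Lemma~\ref{lem:conings} to such a substructure yields a chromatic bound that, in combination with Lemma~\ref{lem:fourclique}, forces the existence of new vertices whose addition introduces a fresh dominating edge, returning control to the first rule.

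After each extension I canonicalize modulo graph isomorphism, prune any branch that closes, and iterate. The proof for $H_i'$ reduces to verifying that the resulting finite case tree terminates with every leaf closed, which is checked by computer. The main obstacle is combinatorial explosion: a new vertex may attach to a configuration on $n$ labelled vertices in up to $2^n$ ways, and configurations grow as the search deepens. Controlling this requires aggressive isomorphism pruning of intermediate configurations, a carefully chosen branching heuristic (which dominating edge to process, and in what order), and an efficient implementation. The thirty-three graphs $H_i'$ of Figure~\ref{fig:main} are essentially the ones for which the resulting search tree both terminates and is small enough to enumerate; correctness of the theorem then reduces to correctness of this finite enumeration.
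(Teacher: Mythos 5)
Your proposal follows essentially the same strategy as the paper: a computer-assisted case analysis seeded by the graphs of Theorem~\ref{thm:past}, alternating between the dominating-edge extension of Lemma~\ref{lem:dominating} and the four-clique-cover/coloring argument built on Lemmas~\ref{lem:fourclique} and~\ref{lem:conings} (this is precisely the paper's Algorithms~\ref{alg:dominating}, \ref{alg:fourclique}, \ref{alg:fourcliqueadvanced}, and~\ref{alg:full}, with the paper additionally factoring the search through a forest of intermediate graphs $I_j$ to keep each run small). One minor slip worth fixing: since $uv$ dominates $C$, the set $V(C)\setminus(N(u)\cup N(v))$ is essentially empty, so the adjacencies forced by $\alpha(G)\le 2$ are to $V(C)\setminus(N(u)\cap N(v))$ and the free branching is over $N(u)\cap N(v)$; as written your pruning is vacuous, although the enumeration it describes is still a correct (just unoptimized) version of what the paper does.
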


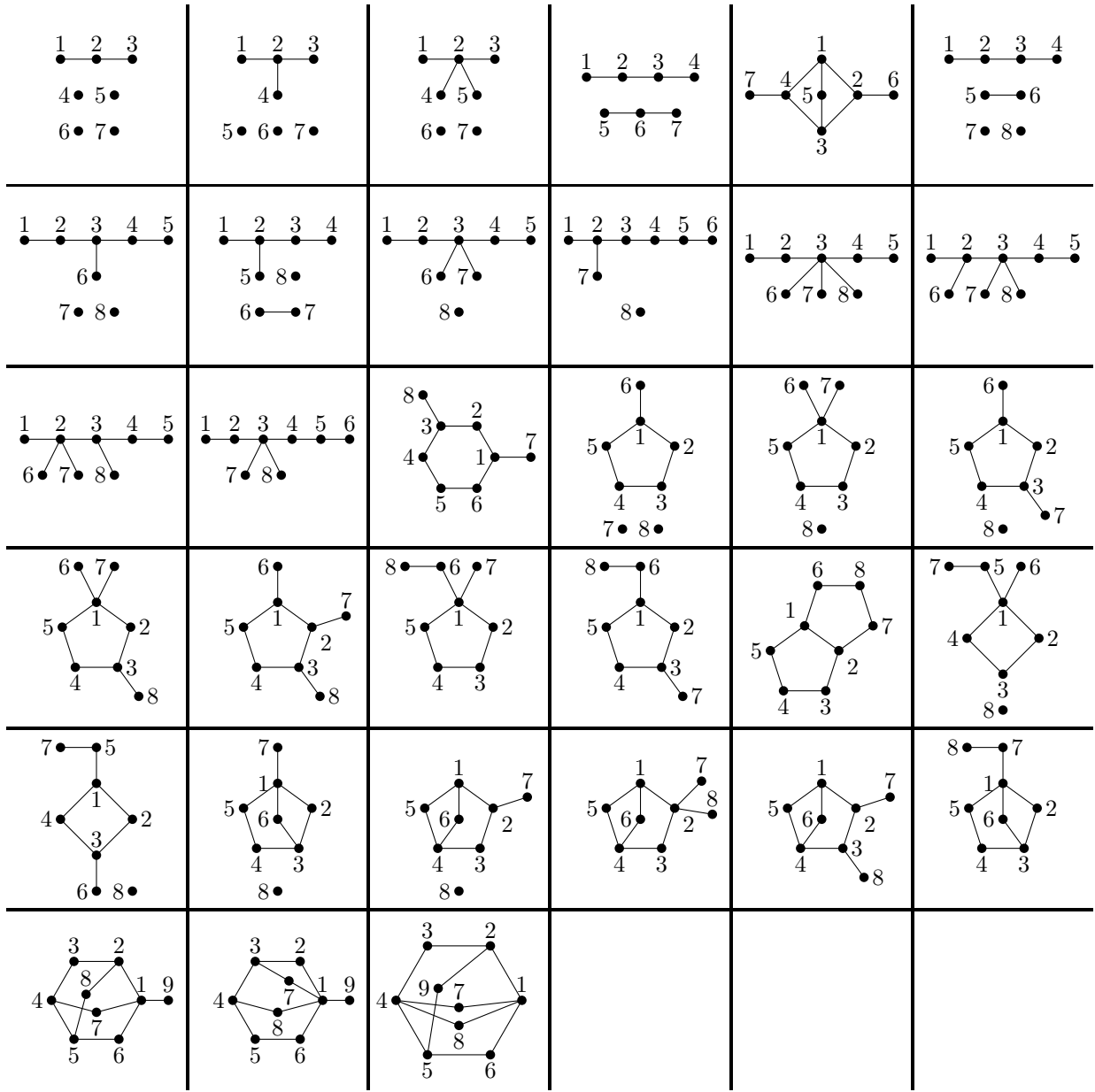
\begin{figure}[htbp!]
    \centering
    \begin{TAB}(e)[0.75em]{c|c|c|c|c|c}{c|c|c|c|c|c}
    \begin{tikzpicture}[scale=0.54]
    \bound;
    \labvert{1}{-1,1}{above};
    \labvert{2}{0,1}{above};
    \labvert{3}{1,1}{above};
    \labvert{4}{-0.5,0}{left};
    \labvert{5}{0.5,0}{left};
    \labvert{6}{-0.5,-1}{left};
    \labvert{7}{0.5,-1}{left};
    \edge{1}{2};
    \edge{2}{3};
    \end{tikzpicture} &
    \begin{tikzpicture}[scale=0.54]
    \bound;
    \labvert{1}{-1,1}{above};
    \labvert{2}{0,1}{above};
    \labvert{3}{1,1}{above};
    \labvert{4}{0,0}{left};
    \labvert{5}{-1,-1}{left};
    \labvert{6}{0,-1}{left};
    \labvert{7}{1,-1}{left};
    \edge{1}{2};
    \edge{2}{3};
    \edge{2}{4};
    \end{tikzpicture} &
    \begin{tikzpicture}[scale=0.54]
    \bound;
    \labvert{1}{-1,1}{above};
    \labvert{2}{0,1}{above};
    \labvert{3}{1,1}{above};
    \labvert{4}{-0.5,0}{left};
    \labvert{5}{0.5,0}{left};
    \labvert{6}{-0.5,-1}{left};
    \labvert{7}{0.5,-1}{left};
    \edge{1}{2};
    \edge{2}{3};
    \edge{2}{4};
    \edge{2}{5};
    \end{tikzpicture} &
    \begin{tikzpicture}[scale=0.54]
    \bound;
    \labvert{1}{-1.5,0.5}{above};
    \labvert{2}{-0.5,0.5}{above};
    \labvert{3}{0.5,0.5}{above};
    \labvert{4}{1.5,0.5}{above};
    \labvert{5}{-1,-0.5}{below};
    \labvert{6}{0,-0.5}{below};
    \labvert{7}{1,-0.5}{below};
    \edge{1}{2};
    \edge{2}{3};
    \edge{3}{4};
    \edge{5}{6};
    \edge{6}{7};
    \end{tikzpicture} &
    \begin{tikzpicture}[scale=0.54]
    \bound;
    \labvert{1}{0,1}{above};
    \labvert{2}{1,0}{above};
    \labvert{3}{0,-1}{below};
    \labvert{4}{-1,0}{above};
    \labvert{5}{0,0}{left};
    \labvert{6}{2,0}{above};
    \labvert{7}{-2,0}{above};
    \edge{1}{2};
    \edge{2}{3};
    \edge{3}{4};
    \edge{4}{1};
    \edge{1}{5};
    \edge{3}{5};
    \edge{2}{6};
    \edge{4}{7};
    \end{tikzpicture} &
    \begin{tikzpicture}[scale=0.54]
    \bound;
    \labvert{1}{-1.5,1}{above};
    \labvert{2}{-0.5,1}{above};
    \labvert{3}{0.5,1}{above};
    \labvert{4}{1.5,1}{above};
    \labvert{5}{-0.5,0}{left};
    \labvert{6}{0.5,0}{right};
    \labvert{7}{-0.5,-1}{left};
    \labvert{8}{0.5,-1}{left};
    \edge{1}{2};
    \edge{2}{3};
    \edge{3}{4};
    \edge{5}{6};
    \end{tikzpicture} \\
    \begin{tikzpicture}[scale=0.54]
    \bound;
    \labvert{1}{-2,1}{above};
    \labvert{2}{-1,1}{above};
    \labvert{3}{0,1}{above};
    \labvert{4}{1,1}{above};
    \labvert{5}{2,1}{above};
    \labvert{6}{0,0}{left};
    \labvert{7}{-0.5,-1}{left};
    \labvert{8}{0.5,-1}{left};
    \edge{1}{2};
    \edge{2}{3};
    \edge{3}{4};
    \edge{4}{5};
    \edge{3}{6};
    \end{tikzpicture} &
    \begin{tikzpicture}[scale=0.54]
    \bound;
    \labvert{1}{-1.5,1}{above};
    \labvert{2}{-0.5,1}{above};
    \labvert{3}{0.5,1}{above};
    \labvert{4}{1.5,1}{above};
    \labvert{5}{-0.5,0}{left};
    \labvert{8}{0.5,0}{left};
    \labvert{6}{-0.5,-1}{left};
    \labvert{7}{0.5,-1}{right};
    \edge{1}{2};
    \edge{2}{3};
    \edge{3}{4};
    \edge{2}{5};
    \edge{6}{7};
    \end{tikzpicture} &
    \begin{tikzpicture}[scale=0.54]
    \bound;
    \labvert{1}{-2,1}{above};
    \labvert{2}{-1,1}{above};
    \labvert{3}{0,1}{above};
    \labvert{4}{1,1}{above};
    \labvert{5}{2,1}{above};
    \labvert{6}{-0.5,0}{left};
    \labvert{7}{0.5,0}{left};
    \labvert{8}{0,-1}{left};
    \edge{1}{2};
    \edge{2}{3};
    \edge{3}{4};
    \edge{4}{5};
    \edge{3}{6};
    \edge{3}{7};
    \end{tikzpicture} &
    \begin{tikzpicture}[scale=0.54]
    \bound;
    \labvert{1}{-2,1}{above};
    \labvert{2}{-1.2,1}{above};
    \labvert{3}{-0.4,1}{above};
    \labvert{4}{0.4,1}{above};
    \labvert{5}{1.2,1}{above};
    \labvert{6}{2,1}{above};
    \labvert{7}{-1.2,0}{left};
    \labvert{8}{0,-1}{left};
    \edge{1}{2};
    \edge{2}{3};
    \edge{3}{4};
    \edge{4}{5};
    \edge{5}{6};
    \edge{2}{7};
    \end{tikzpicture} &
    \begin{tikzpicture}[scale=0.54]
    \bound;
    \labvert{1}{-2,0.5}{above};
    \labvert{2}{-1,0.5}{above};
    \labvert{3}{0,0.5}{above};
    \labvert{4}{1,0.5}{above};
    \labvert{5}{2,0.5}{above};
    \labvert{6}{-1,-0.5}{left};
    \labvert{7}{0,-0.5}{left};
    \labvert{8}{1,-0.5}{left};
    \edge{1}{2};
    \edge{2}{3};
    \edge{3}{4};
    \edge{4}{5};
    \edge{3}{6};
    \edge{3}{7};
    \edge{3}{8};
    \end{tikzpicture} &
    \begin{tikzpicture}[scale=0.54]
    \bound;
    \labvert{1}{-2,0.5}{above};
    \labvert{2}{-1,0.5}{above};
    \labvert{3}{0,0.5}{above};
    \labvert{4}{1,0.5}{above};
    \labvert{5}{2,0.5}{above};
    \labvert{6}{-1.5,-0.5}{left};
    \labvert{7}{-0.5,-0.5}{left};
    \labvert{8}{0.5,-0.5}{left};
    \edge{1}{2};
    \edge{2}{3};
    \edge{3}{4};
    \edge{4}{5};
    \edge{2}{6};
    \edge{3}{7};
    \edge{3}{8};
    \end{tikzpicture} \\
    \begin{tikzpicture}[scale=0.54]
    \bound;
    \labvert{1}{-2,0.5}{above};
    \labvert{2}{-1,0.5}{above};
    \labvert{3}{0,0.5}{above};
    \labvert{4}{1,0.5}{above};
    \labvert{5}{2,0.5}{above};
    \labvert{6}{-1.5,-0.5}{left};
    \labvert{7}{-0.5,-0.5}{left};
    \labvert{8}{0.5,-0.5}{left};
    \edge{1}{2};
    \edge{2}{3};
    \edge{3}{4};
    \edge{4}{5};
    \edge{2}{6};
    \edge{2}{7};
    \edge{3}{8};
    \end{tikzpicture} &
    \begin{tikzpicture}[scale=0.54]
    \bound;
    \labvert{1}{-2,0.5}{above};
    \labvert{2}{-1.2,0.5}{above};
    \labvert{3}{-0.4,0.5}{above};
    \labvert{4}{0.4,0.5}{above};
    \labvert{5}{1.2,0.5}{above};
    \labvert{6}{2,0.5}{above};
    \labvert{7}{-0.9,-0.5}{left};
    \labvert{8}{0.1,-0.5}{left};
    \edge{1}{2};
    \edge{2}{3};
    \edge{3}{4};
    \edge{4}{5};
    \edge{5}{6};
    \edge{3}{7};
    \edge{3}{8};
    \end{tikzpicture} &
    \begin{tikzpicture}[scale=0.54]
    \bound;
    \labvert{1}{1,0}{left};
    \labvert{2}{0.5,0.866}{above};
    \labvert{3}{-0.5,0.866}{left};
    \labvert{4}{-1,0}{left};
    \labvert{5}{-0.5,-0.866}{below};
    \labvert{6}{0.5,-0.866}{below};
    \labvert{7}{2,0}{above};
    \labvert{8}{-1,1.732}{left};
    \edge{1}{2};
    \edge{2}{3};
    \edge{3}{4};
    \edge{4}{5};
    \edge{5}{6};
    \edge{6}{1};
    \edge{1}{7};
    \edge{3}{8};
    \end{tikzpicture} &
    \begin{tikzpicture}[scale=0.54]
    \bound;
    \labvert{1}{0,1}{below};
    \labvert{2}{0.951,0.309}{right};
    \labvert{3}{0.588,-0.809}{below};
    \labvert{4}{-0.588,-0.809}{below};
    \labvert{5}{-0.951,0.309}{left};
    \labvert{6}{0,2}{left};
    \labvert{7}{-0.5,-2}{left};
    \labvert{8}{0.5,-2}{left};
    \edge{1}{2};
    \edge{2}{3};
    \edge{3}{4};
    \edge{4}{5};
    \edge{5}{1};
    \edge{1}{6};
    \end{tikzpicture} &
    \begin{tikzpicture}[scale=0.54]
    \bound;
    \labvert{1}{0,1}{below};
    \labvert{2}{0.951,0.309}{right};
    \labvert{3}{0.588,-0.809}{below};
    \labvert{4}{-0.588,-0.809}{below};
    \labvert{5}{-0.951,0.309}{left};
    \labvert{6}{-0.5,2}{left};
    \labvert{7}{0.5,2}{left};
    \labvert{8}{0,-2}{left};
    \edge{1}{2};
    \edge{2}{3};
    \edge{3}{4};
    \edge{4}{5};
    \edge{5}{1};
    \edge{1}{6};
    \edge{1}{7};
    \end{tikzpicture} &
    \begin{tikzpicture}[scale=0.54]
    \bound;
    \labvert{1}{0,1}{below};
    \labvert{2}{0.951,0.309}{right};
    \labvert{3}{0.588,-0.809}{right};
    \labvert{4}{-0.588,-0.809}{below};
    \labvert{5}{-0.951,0.309}{left};
    \labvert{6}{0,2}{left};
    \labvert{7}{1.176,-1.618}{right};
    \labvert{8}{0,-2}{left};
    \edge{1}{2};
    \edge{2}{3};
    \edge{3}{4};
    \edge{4}{5};
    \edge{5}{1};
    \edge{1}{6};
    \edge{3}{7};
    \end{tikzpicture} \\
    \begin{tikzpicture}[scale=0.54]
    \bound;
    \labvert{1}{0,1}{below};
    \labvert{2}{0.951,0.309}{right};
    \labvert{3}{0.588,-0.809}{right};
    \labvert{4}{-0.588,-0.809}{below};
    \labvert{5}{-0.951,0.309}{left};
    \labvert{6}{-0.5,2}{left};
    \labvert{7}{0.5,2}{left};
    \labvert{8}{1.176,-1.618}{right};
    \edge{1}{2};
    \edge{2}{3};
    \edge{3}{4};
    \edge{4}{5};
    \edge{5}{1};
    \edge{1}{6};
    \edge{1}{7};
    \edge{3}{8};
    \end{tikzpicture} &
    \begin{tikzpicture}[scale=0.54]
    \bound;
    \labvert{1}{0,1}{below};
    \labvert{2}{0.951,0.309}{below right};
    \labvert{3}{0.588,-0.809}{right};
    \labvert{4}{-0.588,-0.809}{below};
    \labvert{5}{-0.951,0.309}{left};
    \labvert{6}{0,2}{left};
    \labvert{7}{1.902,0.618}{above};
    \labvert{8}{1.176,-1.618}{right};
    \edge{1}{2};
    \edge{2}{3};
    \edge{3}{4};
    \edge{4}{5};
    \edge{5}{1};
    \edge{1}{6};
    \edge{2}{7};
    \edge{3}{8};
    \end{tikzpicture} &
    \begin{tikzpicture}[scale=0.54]
    \bound;
    \labvert{1}{0,1}{below};
    \labvert{2}{0.951,0.309}{right};
    \labvert{3}{0.588,-0.809}{below};
    \labvert{4}{-0.588,-0.809}{below};
    \labvert{5}{-0.951,0.309}{left};
    \labvert{6}{-0.5,2}{right};
    \labvert{7}{0.5,2}{right};
    \labvert{8}{-1.5,2}{left};
    \edge{1}{2};
    \edge{2}{3};
    \edge{3}{4};
    \edge{4}{5};
    \edge{5}{1};
    \edge{1}{6};
    \edge{1}{7};
    \edge{6}{8};
    \end{tikzpicture} &
    \begin{tikzpicture}[scale=0.54]
    \bound;
    \labvert{1}{0,1}{below};
    \labvert{2}{0.951,0.309}{right};
    \labvert{3}{0.588,-0.809}{right};
    \labvert{4}{-0.588,-0.809}{below};
    \labvert{5}{-0.951,0.309}{left};
    \labvert{6}{0,2}{right};
    \labvert{7}{1.176,-1.618}{right};
    \labvert{8}{-1,2}{left};
    \edge{1}{2};
    \edge{2}{3};
    \edge{3}{4};
    \edge{4}{5};
    \edge{5}{1};
    \edge{1}{6};
    \edge{3}{7};
    \edge{6}{8};
    \end{tikzpicture} &
    \begin{tikzpicture}[scale=0.54]
    \bound;
    \labvert{1}{-0.476,0.345}{above left};
    \labvert{2}{0.476,-0.345}{below right};
    \labvert{3}{0.112,-1.464}{below};
    \labvert{4}{-1.063,-1.464}{below};
    \labvert{5}{-1.427,-0.345}{left};
    \labvert{6}{-0.112,1.464}{above};
    \labvert{7}{1.427,0.345}{right};
    \labvert{8}{1.063,1.464}{above};
    \edge{1}{2};
    \edge{2}{3};
    \edge{3}{4};
    \edge{4}{5};
    \edge{5}{1};
    \edge{1}{6};
    \edge{2}{7};
    \edge{6}{8};
    \edge{7}{8};
    \end{tikzpicture} &
    \begin{tikzpicture}[scale=0.54]
    \bound;
    \labvert{1}{0,1}{below};
    \labvert{2}{1,0}{right};
    \labvert{3}{0,-1}{below};
    \labvert{4}{-1,0}{left};
    \labvert{5}{-0.5,2}{right};
    \labvert{6}{0.5,2}{right};
    \labvert{7}{-1.5,2}{left};
    \labvert{8}{0,-2}{left};
    \edge{1}{2};
    \edge{2}{3};
    \edge{3}{4};
    \edge{4}{1};
    \edge{1}{5};
    \edge{1}{6};
    \edge{5}{7};
    \end{tikzpicture} \\
    \begin{tikzpicture}[scale=0.54]
    \bound;
    \labvert{1}{0,1}{below};
    \labvert{2}{1,0}{right};
    \labvert{3}{0,-1}{above};
    \labvert{4}{-1,0}{left};
    \labvert{5}{0,2}{right};
    \labvert{6}{0,-2}{left};
    \labvert{7}{-1,2}{left};
    \labvert{8}{1,-2}{left};
    \edge{1}{2};
    \edge{2}{3};
    \edge{3}{4};
    \edge{4}{1};
    \edge{1}{5};
    \edge{3}{6};
    \edge{5}{7};
    \end{tikzpicture} &
    \begin{tikzpicture}[scale=0.54]
    \bound;
    \labvert{1}{0,1}{left};
    \labvert{2}{0.951,0.309}{right};
    \labvert{3}{0.588,-0.809}{below};
    \labvert{4}{-0.588,-0.809}{below};
    \labvert{5}{-0.951,0.309}{left};
    \labvert{6}{0,0}{left};
    \labvert{7}{0,2}{left};
    \labvert{8}{0,-2}{left};
    \edge{1}{2};
    \edge{2}{3};
    \edge{3}{4};
    \edge{4}{5};
    \edge{5}{1};
    \edge{1}{6};
    \edge{3}{6};
    \edge{1}{7};
    \end{tikzpicture} &
    \begin{tikzpicture}[scale=0.54]
    \bound;
    \labvert{1}{0,1}{above};
    \labvert{2}{0.951,0.309}{below right};
    \labvert{3}{0.588,-0.809}{below};
    \labvert{4}{-0.588,-0.809}{below};
    \labvert{5}{-0.951,0.309}{left};
    \labvert{6}{0,0}{left};
    \labvert{7}{1.902,0.618}{above};
    \labvert{8}{0,-2}{left};
    \edge{1}{2};
    \edge{2}{3};
    \edge{3}{4};
    \edge{4}{5};
    \edge{5}{1};
    \edge{1}{6};
    \edge{4}{6};
    \edge{2}{7};
    \end{tikzpicture} &
    \begin{tikzpicture}[scale=0.54]
    \bound;
    \labvert{1}{0,1}{above};
    \labvert{2}{0.951,0.309}{below right};
    \labvert{3}{0.588,-0.809}{below};
    \labvert{4}{-0.588,-0.809}{below};
    \labvert{5}{-0.951,0.309}{left};
    \labvert{6}{0,0}{left};
    \labvert{7}{1.699,1.063}{above};
    \labvert{8}{2,0.139}{above};
    \edge{1}{2};
    \edge{2}{3};
    \edge{3}{4};
    \edge{4}{5};
    \edge{5}{1};
    \edge{1}{6};
    \edge{4}{6};
    \edge{2}{7};
    \edge{2}{8};
    \end{tikzpicture} &
    \begin{tikzpicture}[scale=0.54]
    \bound;
    \labvert{1}{0,1}{above};
    \labvert{2}{0.951,0.309}{below right};
    \labvert{3}{0.588,-0.809}{right};
    \labvert{4}{-0.588,-0.809}{below};
    \labvert{5}{-0.951,0.309}{left};
    \labvert{6}{0,0}{left};
    \labvert{7}{1.902,0.618}{above};
    \labvert{8}{1.176,-1.618}{right};
    \edge{1}{2};
    \edge{2}{3};
    \edge{3}{4};
    \edge{4}{5};
    \edge{5}{1};
    \edge{1}{6};
    \edge{4}{6};
    \edge{2}{7};
    \edge{3}{8};
    \end{tikzpicture} &
    \begin{tikzpicture}[scale=0.54]
    \bound;
    \labvert{1}{0,1}{left};
    \labvert{2}{0.951,0.309}{right};
    \labvert{3}{0.588,-0.809}{below};
    \labvert{4}{-0.588,-0.809}{below};
    \labvert{5}{-0.951,0.309}{left};
    \labvert{6}{0,0}{left};
    \labvert{7}{0,2}{right};
    \labvert{8}{-1,2}{left};
    \edge{1}{2};
    \edge{2}{3};
    \edge{3}{4};
    \edge{4}{5};
    \edge{5}{1};
    \edge{1}{6};
    \edge{3}{6};
    \edge{1}{7};
    \edge{7}{8};
    \end{tikzpicture} \\
    \begin{tikzpicture}[scale=0.54]
    \bound;
    \labvert{1}{1.25,0}{above};
    \labvert{2}{0.625,1.083}{above};
    \labvert{3}{-0.625,1.083}{above};
    \labvert{4}{-1.25,0}{left};
    \labvert{5}{-0.625,-1.083}{below};
    \labvert{6}{0.625,-1.083}{below};
    \labvert{7}{0,-0.333}{below};
    \labvert{8}{-0.289,0.167}{above};
    \labvert{9}{2,0}{above};
    \edge{1}{2};
    \edge{2}{3};
    \edge{3}{4};
    \edge{4}{5};
    \edge{5}{6};
    \edge{6}{1};
    \edge{1}{7};
    \edge{4}{7};
    \edge{2}{8};
    \edge{5}{8};
    \edge{1}{9};
    \end{tikzpicture} &
    \begin{tikzpicture}[scale=0.54]
    \bound;
    \labvert{1}{1.25,0}{above};
    \labvert{2}{0.625,1.083}{above};
    \labvert{3}{-0.625,1.083}{above};
    \labvert{4}{-1.25,0}{left};
    \labvert{5}{-0.625,-1.083}{below};
    \labvert{6}{0.625,-1.083}{below};
    \labvert{8}{0,-0.333}{below};
    \labvert{7}{0.313,0.541}{below};
    \labvert{9}{2,0}{above};
    \edge{1}{2};
    \edge{2}{3};
    \edge{3}{4};
    \edge{4}{5};
    \edge{5}{6};
    \edge{6}{1};
    \edge{1}{8};
    \edge{4}{8};
    \edge{1}{7};
    \edge{3}{7};
    \edge{1}{9};
    \end{tikzpicture} &
    \begin{tikzpicture}[scale=0.54]
    \bound;
    \labvert{1}{1.75,0}{above};
    \labvert{2}{0.875,1.516}{above};
    \labvert{3}{-0.875,1.516}{above};
    \labvert{4}{-1.75,0}{left};
    \labvert{5}{-0.875,-1.516}{below};
    \labvert{6}{0.875,-1.516}{below};
    \labvert{7}{0,-0.2}{above};
    \labvert{9}{-0.577,0.333}{left};
    \labvert{8}{0,-0.7}{below};
    \edge{1}{2};
    \edge{2}{3};
    \edge{3}{4};
    \edge{4}{5};
    \edge{5}{6};
    \edge{6}{1};
    \edge{1}{7};
    \edge{4}{7};
    \edge{2}{9};
    \edge{5}{9};
    \edge{1}{8};
    \edge{4}{8};
    \end{tikzpicture} & & &
    \end{TAB}
    \caption{The complements of the graphs $H_1', \dots, H_{33}'$, when read from left-to-right, top-to-bottom. We display the complements because these graphs have many edges, so it is almost always easier to recognize them from their complements. The graphs are first ordered by number of vertices, then by common structural elements. The labelings are used to choose edges in a consistent manner for algorithms.}
    \label{fig:main}
\end{figure}

A small number of these cases are easily proved by hand, but most require hundreds of steps and are proved with computer assistance. We describe the relevant algorithms, prove their correctness, and prove some facts about their behavior and limitations in Section~\ref{sec:algorithms}. We then provide the data necessary to verify Theorem~\ref{thm:main} in Section~\ref{sec:data}, and we prove a few of the simple cases by hand for demonstration. We make some final remarks in Section~\ref{sec:conclusion}.

\subsection{Relationship to Past Results}

Theorem~\ref{thm:main} improves dramatically on Theorem~\ref{thm:past}. In fact, the only graphs in the statement of Theorem~\ref{thm:past} that are not an induced subgraph of any graph in the statement of Theorem~\ref{thm:main} are $H_7$ and $K_7$.

By Proposition~\ref{prop:complete}, if HC-$\{\overline{K_3}, K_8\}$ holds then Algorithm~\ref{alg:full} can prove this fact, though it may take an extremely long time. In fact, since the Ramsey number $R(3,8)=28$, the algorithm will most likely have to consider 27-vertex graphs at some point. Each such graph accrues a ``weight'' of $2^{27}=134217728$ (see Section~\ref{sec:conclusion}), which is already nearly 6 times the highest total weight of any of the proof steps in Figure~\ref{fig:data} used to prove Theorem~\ref{thm:main}, and there are 477142 $\{\overline{K_3},K_8\}$-free graphs with 27 vertices \cite{ramsey}, so one should not expect to improve $K_7$ to $K_8$ by running Algorithm~\ref{alg:full} for any reasonable amount of time. Instead, one should just check these 477142 graphs directly for counterexamples to HC-$\{\overline{K_3}\}$, since all graphs with 26 or fewer vertices have large enough clique number to satisfy the hypothesis of Lemma~\ref{lem:fourclique} due to the exact known values of the Ramsey numbers $R(3,k)$ with $k\le 7$. Our initial hope was that Theorem~\ref{thm:main} would make this check very easy, but this could not be further from the truth. In fact, \textit{all} $\{\overline{K_3},K_8\}$-free graphs on 27 vertices contain \textit{all} of the $H_i'$ (and $H_7$), so Theorem~\ref{thm:main} is no help at all. Instead, we perform a check using other means in Appendix~\ref{app:k8}, and get:
\begin{theorem}\label{thm:k8}
HC-$\{\overline{K_3},K_8\}$ holds.
\end{theorem}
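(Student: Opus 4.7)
The plan is to split on $|V(G)|$. Since $G$ is $\{\overline{K_3},K_8\}$-free, $\alpha(G)\le 2$ and $\omega(G)\le 7$, and the Ramsey number $R(3,8)=28$ forces $|G|\le 27$. The case $|G|\le 26$ will be dispatched by Lemma~\ref{lem:fourclique}; the case $|G|=27$ reduces to a finite (but large) computer check.

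For $|G|\le 26$, set $k=\omega(G)$. The exact Ramsey numbers $R(3,3)=6$, $R(3,4)=9$, $R(3,5)=14$, $R(3,6)=18$, and $R(3,7)=23$ yield $|G|\le R(3,k+1)-1$, and a short case analysis over $k\in\{1,\dots,7\}$ shows that this upper bound is small enough in every case to force $\omega(G)\ge\lceil|G|/4\rceil$ (or $\lceil(|G|+3)/4\rceil$ when $|G|$ is odd). Lemma~\ref{lem:fourclique} then delivers $h(G)\ge\chi(G)$.

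For $|G|=27$, $R(3,7)=23$ combined with $K_8$-freeness pins down $\omega(G)=7$ exactly, and $\alpha(G)\le 2$ forces $\chi(G)\ge\lceil 27/2\rceil=14$. Hadwiger's Conjecture for this family thus reduces to verifying $h(G)\ge\chi(G)$ for each of the $477142$ such graphs enumerated in~\cite{ramsey}. A natural starting point is the $K_7$ already present in $G$, giving seven singleton branch sets; the remaining $20$ vertices must then be organised into further connected branch sets that are pairwise adjacent in $G$ and each adjacent to every singleton. Since $\overline{G}$ is triangle-free, $G$ is edge-dense and admits large matchings, furnishing abundant candidates for two-vertex branch sets, and one can alternatively appeal to seagull-packing style arguments in the spirit of Lemma~\ref{lem:fourclique}.

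The main obstacle is efficiency: both $\chi(G)$ and $h(G)$ are NP-hard to compute exactly, so a naive enumeration over all $477142$ graphs is infeasible. The real content of the argument, carried out in Appendix~\ref{app:k8}, is a tailored minor-search that leverages the rigid structural constraints ($\alpha\le 2$, $\omega=7$, $|G|=27$, triangle-free complement) to prune aggressively enough to finish in reasonable time.
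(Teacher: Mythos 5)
Your reduction of the case $\abs{G}\le 26$ via the exact Ramsey numbers $R(3,k)$ and Lemma~\ref{lem:fourclique} is correct and is exactly what the paper does. The problem is the case $\abs{G}=27$, where your proposal has a genuine gap: you correctly observe that directly verifying $h(G)\ge\chi(G)$ for each of the $477142$ graphs is infeasible (both invariants are hard to compute, and $\chi(G)\ge 14$ here), but you then defer to an unspecified ``tailored minor-search'' without supplying the idea that actually makes the check tractable. As written, the 27-vertex case is not proved.

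The paper's key idea, which you are missing, is to avoid computing $h$ and $\chi$ entirely. Since HC-$\{\overline{K_3},K_7\}$ is already known, any counterexample to HC-$\{\overline{K_3}\}$ has at least $27$ vertices, so a $27$-vertex counterexample would necessarily be a \emph{minimal} counterexample. One then invokes Lemma~\ref{lem:matching} (Plummer--Stiebitz--Toft): a minimal counterexample has no connected dominating matching. The computation reduces to the polynomial-time check, per graph, that each of the $477142$ graphs has either a dominating edge ($455344$ of them) or a connected dominating matching of two edges (the remaining $21798$); this is Proposition~\ref{prop:k8specific}, and it immediately rules out all of them as minimal counterexamples. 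Your sketch never uses minimality or the dominating-matching obstruction, and without some such structural lemma the finite check you describe does not go through.
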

and also
\begin{corollary}\label{cor:k8bound}
A minimal counterexample to HC-$\{\overline{K_3}\}$ has either 31 vertices or at least 33 vertices.
\end{corollary}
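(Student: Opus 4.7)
The plan is to combine Theorem~\ref{thm:k8} with Lemma~\ref{lem:fourclique} to produce simultaneous lower and upper bounds on $\omega(G)$ for any counterexample $G$, and then to see which values of $\abs{G}$ are compatible with both bounds. No further structural analysis or computer search should be needed; this is essentially arithmetic on top of the two cited results.

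Concretely, let $G$ be a minimal counterexample to HC-$\{\overline{K_3}\}$. Then $G$ is a counterexample to HC-$\{\overline{K_3}\}$, so by Theorem~\ref{thm:k8} it cannot be $K_8$-free, i.e.\ $\omega(G) \ge 8$. Since $G$ is also $\overline{K_3}$-free and satisfies $h(G) < \chi(G)$, the contrapositive of Lemma~\ref{lem:fourclique} gives
\[
\omega(G) < \ceil*{\tfrac{\abs{G}}{4}} \quad \text{if $\abs{G}$ is even,} \qquad \omega(G) < \ceil*{\tfrac{\abs{G}+3}{4}} \quad \text{if $\abs{G}$ is odd.}
\]

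Combining the two bounds, we need $\ceil*{\abs{G}/4} \ge 9$ in the even case, which forces $\abs{G} > 32$ and hence $\abs{G} \ge 34$; and we need $\ceil*{(\abs{G}+3)/4} \ge 9$ in the odd case, which forces $\abs{G} > 29$ and hence $\abs{G} \ge 31$. The admissible values of $\abs{G}$ are therefore exactly $\{31\} \cup \{33, 34, 35, \ldots\}$, which is the statement of the corollary.

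There is essentially no obstacle here beyond invoking Theorem~\ref{thm:k8}: all the work is in that theorem, and the corollary is a short arithmetic consequence of pairing it with Lemma~\ref{lem:fourclique}. The one thing to double-check is the boundary $\abs{G}=32$, where $\ceil*{32/4}=8$ leaves no room for $\omega(G)\ge 8$, which is exactly why $32$ is excluded while $31$ and $33$ both survive.
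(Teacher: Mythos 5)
Your proof is correct and follows essentially the same route as the paper: apply Theorem~\ref{thm:k8} to get $\omega(G)\ge 8$, then use Lemma~\ref{lem:fourclique} to rule out every order except $31$ and those at least $33$. The only difference is presentational — you phrase the exclusion via the contrapositive and solve the resulting inequalities explicitly, while the paper states it directly — and your arithmetic at the boundary cases $31$, $32$, $33$ checks out.
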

\begin{proof}
By the theorem above, a minimal counterexample must have a $K_8$. If such a graph has fewer than 31 vertices or has 32 vertices exactly, Lemma~\ref{lem:fourclique} implies it is not a counterexample to HC-$\{\overline{K_3}\}$.
\end{proof}
Since $R(3,9)=36$, it seems computationally infeasible to improve $K_8$ to $K_9$ by brute-force.

The fact that $H_7$ was not improved to a larger graph is due primarily to the combination of the facts that the ``$\overline{C_4}$-core'' of $H_7$ is all of $H_7$ (see Section~\ref{sec:limitations}), and $H_7$ has no dominating edges. The only way $H_7$ could be improved by the algorithms in this paper is directly starting from $H_7$ itself, but this is a poor starting place given the lack of dominating edges.

\section{Algorithms}
\label{sec:algorithms}

\subsection{Basic Algorithm: No Dominating Edges}

Suppose all counterexamples to HC-$\{\overline{K_3}\}$ are known to contain induced subgraph $H$, i.e. HC-$\{\overline{K_3}, H\}$ holds. If $H$ has a dominating edge $uv$, then by Lemma~\ref{lem:dominating}, all minimal counterexamples to HC-$\{\overline{K_3}\}$ have an induced $H$ plus a vertex $w$ adjacent to neither $u$ nor $v$. Since all minimal counterexamples have this property, any graph containing any minimal counterexample as an induced subgraph will also have this property, i.e. this holds for all counterexamples, not just minimal ones. Denote by $H_{uv}$ the set of graphs formed by attaching a new vertex $w$ to $H$ in all possible ways while making sure $w$ is not adjacent to $u$ or $v$ (there are only finitely many such attachments). We see that HC-$(\{\overline{K_3}\}\cup H_{uv})$ holds. In fact, we can remove from $H_{uv}$ all graphs containing $\overline{K_3}$ and the result will still hold, since of course no $\overline{K_3}$-free graph could contain a graph that itself contains $\overline{K_3}$. Let $H_{uv}^-$ denote this ``reduced'' set of graphs containing no $\overline{K_3}$.

Suppose that some $H'\in H_{uv}^-$ has a dominating edge $u'v'$. Applying the same logic as before, we see that HC-$(\{\overline{K_3}\}\cup (H_{uv}^-\setminus H')\cup (H')_{u'v'}^-)$ holds. We can repeat this process for as long as we like, at least until we run into graphs with no more dominating edges.

If we impose some additional restriction on counterexamples, say that they satisfy some property $P$, and modify this process to also remove from $H_{uv}$ all graphs satisfying $P$, in some cases we can completely run out of graphs, concluding that all counterexamples HC-$\{\overline{K_3}\}$ satisfy $P$. This is possible when $P$ is \textit{monotone}, that is, if $H$ satisfies $P$ and $G$ has induced subgraph $H$ then $G$ also satisfies $P$. Consider then Algorithm~\ref{alg:dominating}.

\begin{algorithm}[htb!]
    \KwIn{Monotone property $P$, graph $H$ that is known to be an induced subgraph of any counterexample to HC-$\{\overline{K_3}\}$.}
    Let $A\gets\{H\}$\;
    \While{$A$ is nonempty}{
        Let $H'\in A$\;
        Let $A\gets A\setminus\{H'\}$\;
        \If{$H'$ has no dominating edges}{
            \Return{\textsc{failure}}\;
        }\Else{
            Let $uv$ be a dominating edge of $H'$\;
            \For{$N\in 2^{V(H')\setminus \{u,v\}}$}{
                Let $G$ be $H'$ plus a new vertex $w$ such that the neighbors of $w$ are $N$\;
                \If{$G$ has no stable set of size 3, $G$ does not satisfy $P$, and $G$ is $A$-free}{
                    Let $A\gets A\cup\{G\}$\;
                }
            }
        }
    }
    \Return{\textsc{success}}\;
    \caption{Prove that all counterexamples to HC-$\{\overline{K_3}\}$ satisfy monotone property $P$.}
    \label{alg:dominating}
\end{algorithm}

We now prove the correctness of the algorithm.

\begin{lemma}\label{lem:dominatingalg}
If Algorithm~\ref{alg:dominating} returns \textsc{success}, then all counterexamples to HC-$\{\overline{K_3}\}$ satisfy property $P$.
\end{lemma}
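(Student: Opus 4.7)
The plan is to set up a loop invariant and then convert a successful termination into the desired statement via monotonicity. The invariant I would maintain is: at every point in Algorithm~\ref{alg:dominating}'s execution, every minimal counterexample $C$ to HC-$\{\overline{K_3}\}$ that does not satisfy $P$ contains some graph in $A$ as an induced subgraph. If the algorithm returns \textsc{success}, then $A = \emptyset$ at termination, so no minimal counterexample fails $P$; thus every minimal counterexample satisfies $P$, and because every counterexample contains a minimal counterexample as an induced subgraph (greedily delete vertices that keep the chromatic/minor gap) and $P$ is monotone, every counterexample satisfies $P$.

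Initially $A = \{H\}$, and by hypothesis $H$ embeds as an induced subgraph into every counterexample, so the invariant holds. For the inductive step, I fix an iteration that removes $H'$ from $A$ and picks the dominating edge $uv$ of $H'$. Let $C$ be a minimal counterexample to HC-$\{\overline{K_3}\}$ that fails $P$; I need to show $C$ still contains a graph in the updated $A$. If $C$ already contains some element of $A \setminus \{H'\}$, we are done, so assume the only graph of the old $A$ that embeds into $C$ is $H'$, and fix such an embedding. By Lemma~\ref{lem:dominating}, $C$ has no dominating edge, so the image of $uv$ in $C$ fails to dominate $C$, producing a vertex $w \in V(C)$ non-adjacent in $C$ to both $u$ and $v$. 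Since $uv$ dominates $H'$, every vertex of $V(H') \setminus \{u, v\}$ is adjacent to $u$ or $v$ in $H'$, and hence in $C$, so $w \notin V(H')$. The induced subgraph $G$ of $C$ on $V(H') \cup \{w\}$ is therefore precisely one of the graphs $H' + w$ considered in the for-loop, with $N = N_C(w) \cap V(H')$; it is $\overline{K_3}$-free because $C$ is, and it fails $P$ because $C$ does (using monotonicity in the contrapositive). If at the moment this $N$ is processed $G$ passes the $A$-free test, then $G$ is added, so $C$ contains $G \in A$; otherwise $G$ already contains some $G'' \in A$ as an induced subgraph, whence $C$ contains $G''$. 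Either way, the invariant survives the iteration.

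The main obstacle is bookkeeping rather than mathematics: one has to track the invariant across an inner for-loop that mutates $A$ mid-iteration, verify that the witness vertex $w$ really lies outside the embedded copy of $V(H')$ (which uses that $uv$ dominates $H'$), and observe that the conclusion for minimal counterexamples lifts to all counterexamples solely because $P$ is monotone. The single substantive ingredient is Lemma~\ref{lem:dominating}, invoked once per outer iteration to produce $w$; everything else is checking definitions. Note that termination is not part of this statement (SUCCESS is assumed in the hypothesis), so no progress measure is needed here.
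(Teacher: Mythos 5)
Your proof is correct and follows essentially the same route as the paper's: a loop invariant on $A$ maintained across iterations of the \textbf{while} loop, with Lemma~\ref{lem:dominating} supplying the witness vertex $w$ and monotonicity of $P$ handling the filtering and the final lift. The only (cosmetic) difference is that you state the invariant for minimal counterexamples and pass to all counterexamples at the end, whereas the paper phrases the invariant for all counterexamples directly; your version also spells out a couple of details the paper glosses over (that $w\notin V(H')$, and what happens when the $A$-freeness test fails mid-loop), which is fine.
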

\begin{proof}
We claim that, inductively, at the start of each iteration of the main \textbf{while} loop, $A$ is always set of graphs such that all counterexamples to HC-$\{\overline{K_3}\}$ either satisfy $P$ or contains one of the graphs in $A$ as an induced subgraph. The base case, when $A=\{H\}$, is clear.

Suppose $H'\in A$ has a dominating edge $uv$. Then if a counterexample to HC-$\{\overline{K_3}\}$ contained $H'$ as induced subgraph, it must also have a vertex $w$ that is nonadjacent to both $u$ and $v$. Thus if a counterexample contained $H'$, it must actually contain one of the graphs $G$ obtained by attaching to $H$ in all possible ways while being nonadjacent to $u$ and $v$. If $G$ has a stable set of size 3 then it is obviously not a counterexample to HC-$\{\overline{K_3}\}$ (and any graph containing $G$ is also not a counterexample to HC-$\{\overline{K_3}\}$). Additionally, if $G$ satisfies $P$, then since $P$ is monotone, any counterexample containing $G$ will also satisfy $P$. Finally, note that the set of $A$-free graphs is the same as the set of $(A\cup \{G\})$-free graphs if $G$ contains one of the graphs in $A$ as an induced subgraph. Thus if a counterexample to HC-$\{\overline{K_3}\}$ contained $H'$ as induced subgraph, either it satisfies $P$ or it contains as an induced subgraph one of the graphs contained in $A$ at the end of the inner \textbf{for} loop (which is the same as $A$ at the beginning of the next iteration of the \textbf{while} loop). This completes the inductive step.

The condition for the \textbf{while} loop to terminate and the algorithm to return \textsc{success} is that $A$ is empty at the start of the \textbf{while} loop. If this is the case, that means that all counterexamples to HC-$\{\overline{K_3}\}$ satisfy $P$ by the claim just proven.
\end{proof}

Of interest in this paper, the property of containing a particular induced subgraph $H'$ is monotone, so in some cases this can be used to prove that all counterexamples to HC-$\{\overline{K_3}\}$ have an induced $H'$, i.e. HC-$\{\overline{K_3}, H'\}$ holds, so long as the graphs $H$ and $H'$ are chosen carefully and we get lucky. For an example, see the proof of case $I_{21}$ in Section~\ref{sec:data} (and see Figure~\ref{fig:intermediate} for a picture of the complement of this graph). However, it is quite likely that one will run into a graph with no dominating edges, no stable set of size three, and with no induced $H'$, in which case the algorithm will return \textsc{failure}. We can often deal with these cases using Lemmas~\ref{lem:fourclique} and~\ref{lem:conings}, by a process described in the next subsection.

\subsection{Getting Unstuck: Four-Clique Covers}

\label{sec:fourclique}

\subsubsection{Assuming No Vertex Dominates \texorpdfstring{$H$}{H}}

The restriction that a graph has no stable set of size three means that if two vertices have a common nonneighbor, they must be adjacent. Thus if $H$ is an induced subgraph of $G$ and no vertex in $G\setminus H$ is adjacent to all vertices in $H$, then the vertices of $G\setminus H$ partition into cliques based on the intersection of their neighborhood with $H$. Additionally, to avoid stable sets of size 3 and monotone property $P$, many pairs of these cliques must be \textit{complete} to each other, i.e. there must be no nonedges between vertices in those two cliques. In many cases, one can find four cliques $Q_1,Q_2,Q_3,Q_4$ that cover $G$ such that
\[ \sum_{i=1}^4 \abs{Q_i\cap V(H)}\ge \abs{H}+2, \]
so by Pigeonhole Principle, $\omega(G)\ge \ceil{\frac{\abs{G}+2}{4}}$. Note that
\[ \ceil*{\frac{\abs{G}+2}{4}}\ge \begin{cases}
\ceil{\frac{\abs{G}}{4}} & \text{if $\abs{G}$ is even,} \\
\ceil{\frac{\abs{G}+3}{4}} & \text{if $\abs{G}$ is odd,}
\end{cases} \]
so by Lemma~\ref{lem:fourclique}, $G$ is not a counterexample to HC-$\{\overline{K_3}\}$. If this is the case, we conclude that all counterexamples with an induced $H$ either satisfy property $P$ or have a vertex adjacent to every vertex in $H$. The graph obtained by adding to $H$ a vertex adjacent to all vertices in $H$ is known as the \textit{coning} of $H$, and we denote it in this paper by $H\vee K_1$ (for we will generalize this construction in a later subsection). In other words, all counterexamples with an induced $H$ either satisfy property $P$ or have an induced $H\vee K_1$.\footnote{This is the main idea in Bosse's proof of HC-$\{\overline{K_3},W_5\}$, with $H=C_5$ and $P$ omitted \cite{bosse}, although that proof used some additional cleverness beyond the algorithm we describe.}

The usefulness of this is that $H\vee K_1$ has a dominating vertex, and therefore dominating edges, for all $H$. Hence it can be used to ``unstick'' Algorithm~\ref{alg:dominating} when one encounters a graph with no dominating edges, i.e. when the algorithm would otherwise return \textsc{failure}. More importantly, this check can be done completely automatically by Algorithm~\ref{alg:fourclique}.

\begin{algorithm}[htb!]
    \KwIn{Monotone property $P$, graph $H$.}
    Let $A\gets\varnothing$\;
    \For{$N\in 2^{V(H)}\setminus\{V(H)\}$}{
        Let $G$ be $H$ plus a new vertex $w$ such that the neighbors of $w$ are $N$\;
        \If{$G$ has no stable set of size 3 and $G$ does not satisfy $P$}{
            Let $A\gets A\cup\{N\}$\;
        }
    }
    Let $B\gets\varnothing$\;
    \For{$(N_1,N_2)\in\binom{A}{2}$}{
        Let $G$ be $H$ plus new vertices $u$ and $v$ such that the neighbors of $u$ are $N_1$ and neighbors of $v$ are $N_2$\;
        \If{$G$ has a stable set of size 3 or $G$ satisfies $P$}{
            Let $B\gets B\cup\{\{N_1,N_2\}\}$\;
        }
    }
    Let $G$ be $H$ plus vertices $\{u_i\}_{i=1}^{\abs{A}}$ such that the neighbors of each $u_i$ are $N_i$ (where $A=\{N_1,\dots,N_{\abs{A}}\}$), and $u_i$ and $u_j$ are adjacent iff $\{N_i,N_j\}\in B$\;
    \If{there are four cliques $Q_1,Q_2,Q_3,Q_4$ covering $G$ such that $\sum_{i=1}^4 \abs{Q_i\cap V(H)}\ge \abs{H}+2$}{
        \Return{\textsc{success}}\;
    }\Else{
        \Return{\textsc{failure}}\;
    }
    \caption{Prove that all counterexamples to HC-$\{\overline{K_3}\}$ that have an induced $H$ either satisfy monotone property $P$ or have an induced $H\vee K_1$.}
    \label{alg:fourclique}
\end{algorithm}

We now prove the correctness of this algorithm.

\begin{lemma}\label{lem:fourcliquealg}
If Algorithm~\ref{alg:fourclique} returns \textsc{success}, then all counterexamples to HC-$\{\overline{K_3}\}$ with an induced $H$ either satisfy property $P$ or have an induced $H\vee K_1$.
\end{lemma}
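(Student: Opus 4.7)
The plan is to prove the contrapositive: I will fix a counterexample $G^*$ to HC-$\{\overline{K_3}\}$ that contains $H$ as an induced subgraph, does not satisfy $P$, and has no induced $H\vee K_1$, and then use the four cliques $Q_1,Q_2,Q_3,Q_4$ returned by the algorithm, together with the structure of $G^*$, to build a four-clique cover of $G^*$ whose sizes sum to at least $|G^*|+2$, contradicting Lemma~\ref{lem:fourclique}.

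First I would associate to each $w\in V(G^*)\setminus V(H)$ its \emph{type} $N(w):=N_{G^*}(w)\cap V(H)$; since $G^*$ has no induced $H\vee K_1$, $N(w)\ne V(H)$. Because $G^*$ is $\overline{K_3}$-free and, by monotonicity of $P$, no induced subgraph of $G^*$ satisfies $P$, the subgraph $G^*[V(H)\cup\{w\}]$ is $\overline{K_3}$-free and fails $P$, so $N(w)$ equals some $N_i\in A$; this yields a map $\varphi:V(G^*)\setminus V(H)\to\{u_1,\dots,u_{|A|}\}$ sending $w$ to $u_i$ when $N(w)=N_i$. I then want to verify three compatibility facts: (i) if $\varphi(w_1)=\varphi(w_2)=u_i$, then $w_1$ and $w_2$ share a common nonneighbor in $V(H)$ (because $N_i\ne V(H)$), so they must be adjacent in $G^*$ lest a stable 3-set appear; (ii) if $\varphi(w_1)=u_i\ne u_j=\varphi(w_2)$ and $w_1w_2\notin E(G^*)$, then $G^*[V(H)\cup\{w_1,w_2\}]$ is exactly the two-vertex gadget built in the second \textbf{for}-loop, and is $\overline{K_3}$-free and fails $P$, so $\{N_i,N_j\}\notin B$ and therefore $u_iu_j\notin E(G)$ (equivalently, $u_iu_j\in E(G)$ forces $w_1w_2\in E(G^*)$); (iii) if $u_j$ and $h\in V(H)$ lie in a common clique $Q_k$ of the meta-graph, then $u_jh\in E(G)$, i.e.\ $h\in N_j$, so every $w$ with $\varphi(w)=u_j$ is adjacent to $h$ in $G^*$.

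The last step is to lift the cover. Define $Q_k^*:=(Q_k\cap V(H))\cup\varphi^{-1}(Q_k\cap\{u_1,\dots,u_{|A|}\})$. Facts (i)--(iii), together with the observation that $V(H)$ induces the same graph in $G^*$ as in $G$, make each $Q_k^*$ a clique of $G^*$, and the $Q_k^*$ cover $V(G^*)$ because the $Q_k$ cover $V(G)$. Summing, and using that every vertex of $V(G^*)\setminus V(H)$ lies in at least one $Q_k^*$,
\[
\sum_{k=1}^{4}|Q_k^*|\;\ge\;\sum_{k=1}^{4}|Q_k\cap V(H)|+|V(G^*)\setminus V(H)|\;\ge\;(|H|+2)+(|G^*|-|H|)\;=\;|G^*|+2,
\]
so by pigeonhole some $Q_k^*$ has size at least $\lceil(|G^*|+2)/4\rceil$, giving $\omega(G^*)\ge\lceil(|G^*|+2)/4\rceil$ and contradicting Lemma~\ref{lem:fourclique}. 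The main subtlety will be compatibility fact (ii): one must invoke monotonicity of $P$ in the correct direction (``$G^*$ does not satisfy $P$'' implies ``no induced subgraph of $G^*$ satisfies $P$'') and remember that in the algorithm's gadget the two added vertices $u,v$ are implicitly nonadjacent, which is exactly the configuration being ruled out when $w_1w_2\notin E(G^*)$.
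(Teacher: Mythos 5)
Your proposal is correct and follows essentially the same argument as the paper's proof: partition $V(G^*)\setminus V(H)$ into cliques by their neighborhood type in $H$, use membership in $A$ and $B$ to transfer adjacency information from the auxiliary graph $G$ to $G^*$, lift the four-clique cover, and conclude via the Pigeonhole Principle and Lemma~\ref{lem:fourclique}. The compatibility facts (i)--(iii) correspond exactly to the paper's case analysis for verifying that the lifted sets $Q_i'$ are cliques.
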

\begin{proof}
Suppose for contradiction Algorithm~\ref{alg:fourclique} returned \textsc{success} but there is a counterexample $G'$ to HC-$\{\overline{K_3}\}$ with an induced $H$ but no induced $H\vee K_1$ that does not satisfy property $P$.

Since there is no vertex in $G'$ adjacent to all vertices in the induced $H$, the vertices of $G'\setminus H$ partition into at most $2^{\abs{H}}-1$ cliques based on their neighbors in $H$. Specifically, if two vertices $u,v$ have identical neighbors in $H$, then by assumption that $G'$ has no indcued $H\vee K_1$, $u$ and $v$ have a common nonneighbor $w\in H$ and must be therefore be adjacent to avoid $\{u,v,w\}$ being a stable set of size 3. Some of these potential cliques may be forced to be empty; for instance if $u,v$ are not adjacent in $H$, all vertices in $G'\setminus H$ must be adjacent to at least one of $u$ and $v$. It is clear that, after the first \textbf{for} loop, $A$ contains the set of possible neighborhoods in $H$ of vertices in $G'\setminus H$, since the algorithm filters out all neighborhoods such that if there was a vertex with that neighborhood, $G'$ would either have a stable set of size 3 or satisfy property $P$ (here using the fact that $P$ is assumed to be monotone). Note that not all possible neighborhoods are achieved in $G'$, but all vertices in $G'\setminus H$ have one of the neighborhoods in $A$. We identify the cliques in the clique partition of $G'\setminus H$ with the elements of $A$.

Next, suppose $u\in V(G)$ is in clique $N_1$ and $v\in V(G)$ is in clique $N_2$, for some $N_1\ne N_2$. If the graph formed by adding to $H$ the vertices $u$ and $v$ with neighborhoods $N_1$ and $N_2$, respectively, has either a stable set of size 3 or satisfies $P$, then $u$ and $v$ must be adjacent in $G'$. Since this holds for all vertices in $N_1$ and $N_2$, in fact $N_1$ and $N_2$ are complete to each other in $G'$. Then it is clear that, after the second \textbf{for} loop, if $\{N_1,N_2\}\in B$, then $N_1$ and $N_2$ are complete in $G'$ (though it is possible other pairs of cliques in $G'\setminus H$ could be complete to each other but are not captured by this \textbf{for} loop).

Since the algorithm returned \textsc{success}, there is a covering of the graph $G$ formed after the second \textbf{for} loop by four cliques $Q_1,Q_2,Q_3,Q_4$ such that $\sum_{i=1}^4 \abs{Q_i\cap V(H)}\ge \abs{H}+2$. Then there is an analogous clique cover of $G'$ by four cliques $Q_1',Q_2',Q_3',Q_4'$ such that $\sum_{i=1}^4 \abs{Q_i'\cap V(H)}\ge \abs{H}+2$. In particular,
\[ Q_i'=(Q_i\cap V(H))\cup \bigcup_{u_j\in Q_i} (\text{clique in $G'$ corresponding to }N_j) \]
for $i=1,2,3,4$ is such a clique partition. The inequality $\sum_{i=1}^4 \abs{Q_i'\cap V(H)}\ge \abs{H}+2$ is clear, so we just need to check that the $Q_i'$ are indeed cliques. For two $u,v\in Q_i'$ with $u\ne v$, there are four cases to check:
\begin{itemize}
    \item If $u,v\in V(H)$, then since $Q_i$ is a clique in $G$, $uv$ is an edge of $H$, so an edge in $G'$.
    \item If $u\in V(H)$ and $v\in (\text{clique corresponding to }N_j)$ for some $j$ (or symmetrically $v\in V(H)$ and $u\in \text{clique corresponding to }N_j$), then since $Q_i$ is a clique in $G$, $uu_j$ is an edge of $G$, which means $u\in N_j$ and $uv$ is an edge in $G'$ by definition of $N_j$.
    \item If $u,v\in (\text{clique corresponding to }N_j)$ for some $j$, then obviously $uv$ is an edge in $G'$.
    \item If $u\in (\text{clique corresponding to }N_j)$ and $v\in (\text{clique corresponding to }N_k)$ for some $j\ne k$, then since $Q_i$ is a clique in $G$, $u_ju_k$ is an edge of $G$, so $\{N_j,N_k\}\in B$ and the cliques corresponding to $N_j$ and $N_k$ are complete to each other in $G'$, i.e. $uv$ is an edge in $G'$.
\end{itemize}
This implies by Pigeonhole Principle that $\omega(G')\ge \ceil{\frac{\abs{G'}+2}{4}}$, but by Lemma~\ref{lem:fourclique} this contradicts that $G'$ is a counterexample to HC-$\{\overline{K_3}\}$.
\end{proof}

\subsubsection{Weaker or Stronger Assumptions}

Sometimes we can get different results than Algorithm~\ref{alg:fourclique} with different assumptions than there being no vertex adjacent to all vertices in an induced $H$. For instance, if we allow $G$ to have vertices complete to the induced $H$, but assume that they form a clique, then all vertices in $G$ still partition into cliques based on the intersection of their neighborhood with $H$, only now this neighborhood is allowed to be all of $H$. This assumption implies that $c(G, H)$ is $\overline{K_2}$-free, where $c(G, H)$ is the subgraph of $G\setminus H$ consisting of vertices complete to $H$.

We can make an even weaker assumption on $G$ if we enforce only that $c(G, H)$ has clique cover number equal to at most some constant $k$. This is equivalent to the assumption that $\overline{c(G, H)}$ is $k$-colorable, since a cover of $c(G, H)$ by $k$ cliques is a cover of $\overline{c(G, H)}$ by $k$ independent sets, i.e. color classes. By Lemma~\ref{lem:conings}, $c(G, H)$ having clique cover number at most $k$ is implied by $c(G, H)$ being $F_k$-free, where $F_0=\overline{K_1}\cong K_1$, $F_1=\overline{K_2}$, $F_2=\overline{P_4}\cong P_4$, and $F_3=\overline{T_1}$.\footnote{One could also take $F_3=\overline{T_2}$ or $\overline{T_3}$. This leads to slight variations in the algorithm and could potentially lead to different results. We have not investigated these alternatives.} Denote by $G\vee H$ (read ``$G$ join $H$'') the graph formed by adding to $G\cup H$ all edges between $G$ and $H$. In other words, $G\vee H=\overline{\overline{G}\sqcup \overline{H}}$, with $\sqcup$ the disjoint union of graphs. Then consider Algorithm~\ref{alg:fourcliqueadvanced}.

\begin{algorithm}[htb!]
    \KwIn{Monotone property $P$, graph $H$, integer $k\in\{0,1,2,3\}$.}
    Let $A\gets\varnothing$\;
    \For{$N\in 2^{V(H)}\setminus\{V(H)\}$}{
        Let $G$ be $H$ plus a new vertex $w$ such that the neighbors of $w$ are $N$\;
        \If{$G$ has no stable set of size 3 and $G$ does not satisfy $P$}{
            Let $A\gets A\cup\{N\}$\;
        }
    }
    Let $B\gets\varnothing$\;
    \For{$(N_1,N_2)\in\binom{A}{2}$}{
        Let $G$ be $H$ plus new vertices $u$ and $v$ such that the neighbors of $u$ are $N_1$ and neighbors of $v$ are $N_2$\;
        \If{$G$ has a stable set of size 3 or $G$ satisfies $P$}{
            Let $B\gets B\cup\{\{N_1,N_2\}\}$\;
        }
    }
    Let $C\gets\varnothing$\;
    \For{$N\in A$}{
        Let $G$ be $H$ plus new vertices $u$ and $w$ such that the neighbors of $u$ are $N$ and neighbors of $w$ are $V(H)$\;
        \If{$G$ has a stable set of size 3 or $G$ satisfies $P$}{
            Let $C\gets C\cup\{\{N\}\}$\;
        }
    }
    Let $G$ be $H$ plus vertices $\{u_i\}_{i=1}^{\abs{A}}\cup\{w_i\}_{i=1}^k$ such that the neighbors of each $u_i$ are $N_i$ (where $A=\{N_1,\dots,N_{\abs{A}}\}$), $u_i$ and $u_j$ are adjacent iff $\{N_i,N_j\}\in B$, the neighbors of each $w_i$ are $V(H)$, none of the $w_i$ and $w_j$ are adjacent, and $w_i$ and $u_j$ are adjacent iff $N_j\in C$\;
    \If{there are four cliques $Q_1,Q_2,Q_3,Q_4$ covering $G$ such that $\sum_{i=1}^4 \abs{Q_i\cap V(H)}\ge \abs{H}+2$}{
        \Return{\textsc{success}}\;
    }\Else{
        \Return{\textsc{failure}}\;
    }
    \caption{Prove that all counterexamples to HC-$\{\overline{K_3}\}$ that have an induced $H$ either satisfy monotone property $P$ or have an induced $H\vee F_k$, for chosen $k$.}
    \label{alg:fourcliqueadvanced}
\end{algorithm}

The correctness of this algorithm follows from essentially the same logic as Algorithm~\ref{alg:fourclique}, so we skip some details of the proof.

\begin{lemma}\label{lem:fourcliqueadvancedalg}
If Algorithm~\ref{alg:fourcliqueadvanced} returns \textsc{success}, then all counterexamples to HC-$\{\overline{K_3}\}$ with an induced $H$ either satisfy property $P$ or have an induced $H\vee F_k$.
\end{lemma}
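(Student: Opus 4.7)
The plan is to mirror the proof of Lemma~\ref{lem:fourcliquealg}, modified to absorb the vertices of $G'$ that are complete to the induced $H$. Proceed by contradiction: suppose Algorithm~\ref{alg:fourcliqueadvanced} returns \textsc{success} but there exists a counterexample $G'$ to HC-$\{\overline{K_3}\}$ with an induced $H$, no induced $H \vee F_k$, and which does not satisfy $P$. The goal is to build a four-clique cover of $G'$ certifying the hypothesis of Lemma~\ref{lem:fourclique} and thereby contradicting that $G'$ is a counterexample.

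First I would decompose $V(G') \setminus V(H)$ into the non-complete vertices and $c(G', H)$. Since $G'$ has no induced $H \vee F_k$, the subgraph $c(G', H)$ is $F_k$-free; together with the inherited $\overline{K_3}$-freeness, this makes $\overline{c(G',H)}$ a $\{K_3, \overline{F_k}\}$-free graph, so Lemma~\ref{lem:conings} gives a clique cover $C_1, \dots, C_k$ of $c(G',H)$ (padded with empty cliques if needed). For the non-complete vertices, exactly as in the proof of Lemma~\ref{lem:fourcliquealg}, two of them sharing the same neighborhood in $H$ have a common nonneighbor in $H$ and so must be adjacent; hence they partition into cliques indexed by their neighborhood $N$, and every such $N$ lies in the set $A$ produced by the first \textbf{for} loop, since any $N$ yielding a stable set of size $3$ or forcing $P$ is filtered out (using that $P$ is monotone).

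Next I would interpret $B$ and $C$ as forced adjacencies in $G'$. For distinct $N_1, N_2 \in A$, if $\{N_1, N_2\} \in B$ then the $G'$-cliques with these neighborhoods must be complete to each other, else their witnesses would form a stable $\overline{K_3}$ or force $P$. Likewise, for $N \in C$, every vertex of $c(G', H)$ must be adjacent to every non-complete vertex of neighborhood $N$. This shows the graph $G$ assembled at the end of the algorithm is a faithful ``worst-case'' skeleton: identifying $u_i$ with the clique of neighborhood $N_i$ in $G' \setminus H$ and $w_i$ with $C_i$, every edge of $G$ corresponds either to an edge of $H$, to an inclusion $v \in N_i$, or to a bipartite-complete relation in $G'$ certified by $B$ or $C$.

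The heart of the argument is lifting the four-clique cover. Assuming \textsc{success}, fix $Q_1, Q_2, Q_3, Q_4$ covering $G$ with $\sum_i \abs{Q_i \cap V(H)} \ge \abs{H}+2$, and define $Q_i' \subseteq V(G')$ by replacing each $u_j \in Q_i$ with the $G'$-clique of neighborhood $N_j$ and each $w_j \in Q_i$ with $C_j$. Since the $w_j$'s are pairwise nonadjacent in $G$, each $Q_i$ contains at most one $w_j$, so the complete-to-$H$ part of $Q_i'$ is a single $C_j$ and hence a clique. Verifying that each $Q_i'$ is itself a clique in $G'$ is then a case analysis over the five possible pair types: two vertices in $H$; one in $H$ and one in a non-complete clique; one in $H$ and one in $C_j$; two in non-complete cliques (same or different); and one in a non-complete clique with one in $C_j$. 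Each case is discharged by edges in $H$, membership of $v$ in $N_j$, membership $\{N_j, N_m\} \in B$, the definition of a $w_j$'s neighborhood, or membership $N_j \in C$. The $Q_i'$ cover $G'$ and preserve $\sum_i \abs{Q_i' \cap V(H)} \ge \abs{H}+2$, so Pigeonhole gives $\omega(G') \ge \ceil{(\abs{G'}+2)/4}$, contradicting Lemma~\ref{lem:fourclique}. The main obstacle is the lift step: one must handle all five pair types carefully and in particular use the independence of the $w_j$'s in $G$ precisely to match the fact that distinct $C_j$'s of the clique cover of $c(G',H)$ need not have any edges between them in $G'$.
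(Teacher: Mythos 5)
Your proposal is correct and follows essentially the same route as the paper: the paper phrases it as ``if $c(G',H)$ has clique cover number at most $k$ then the lifted four-clique cover shows $G'$ is not a counterexample, so Lemma~\ref{lem:conings} forces an induced $F_k$ in $c(G',H)$,'' whereas you apply Lemma~\ref{lem:conings} in the equivalent forward direction at the outset, but the decomposition, the use of $A$, $B$, $C$, the lift of the cover, and the observation that each $Q_i$ contains at most one $w_j$ all match the paper's argument. No gaps.
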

\begin{proof}
The crux of the argument, as in the previous proof, is that if the $G$ constructed by Algorithm~\ref{alg:fourcliqueadvanced} has a cover by cliques $\{Q_1, Q_2, Q_3, Q_4\}$ with $\sum_{i=1}^4\abs{Q_i\cap V(H)}\ge \abs{H}+2$, then any potential counterexample $G'$ also has a cover by cliques $\{Q_1', Q_2', Q_3', Q_4'\}$ with $\sum_{i=1}^4\abs{Q_i'\cap V(H)}\ge \abs{H}+2$, and therefore has clique number at least $\frac{\abs{G'}+2}{4}$. Now $c(G', H)$ is assumed to have clique cover number at most $k$, so it is covered by $k$ cliques $R_1,\dots, R_k$. The cliques of $G'$ are given by
\[ Q_i'=(Q_i\cap V(H))\cup \bigcup_{u_j\in Q_i} (\text{clique in $G'$ corresponding to }N_j)\cup \bigcup_{w_j\in Q_i} R_j. \]
We must check that these are cliques. For $u, v\in Q_i'$ with $u\ne v$, the new cases not in the previous proof are:
\begin{itemize}
    \item If $u\in V(H)$ and $v\in R_j$ for some $j$, then obviously $uv$ is an edge of $G$.
    \item If $u\in \text{clique corresponding to }N_j$ and $v\in R_\ell$ for some $j,\ell$, then $N_j\in C$, so if $uv$ was a nonedge of $G'$, $G'$ would either have a stable set of size 3 or would satisfy $P$.
    \item If $u,v\in R_j$ for some $j$, then $uv$ is an edge of $G$ by assumption that $R_j$ was a clique.
\end{itemize}
It is not possible for $u\in R_j$ and $v\in R_\ell$ for some $j\ne \ell$ to both be in the same $Q_i'$ since the $w_i\in V(G)$ form a stable set.

Therefore if $c(G', H)$ has clique cover number at most $k$, $G'$ is not a counterexample to HC-$\{\overline{K_3}\}$. So by Lemma~\ref{lem:conings}, $c(G', H)$ has an induced $F_k$, so $G'$ has an induced $H\vee F_k$.
\end{proof}

Recall that the purpose of Algorithms~\ref{alg:fourclique} and~\ref{alg:fourcliqueadvanced} is to ``unstick'' Algorithm~\ref{alg:dominating} by introducing new dominating edges. However, the results of these algorithms in general are much stronger than what is necessary to unstick Algorithm~\ref{alg:dominating}. Define $\dom(H)$ to be the set of graphs formed from $H$ by adding one vertex, such that the resulting graph has at least one dominating edge. Then weakest possible variation of this idea would be Algorithm~\ref{alg:fourclique} but replacing $P$ with the logical-or of $P$ and the negation of being $\dom(H)$-free. Since $H\vee K_1\in \dom(H)$ for any $H$, if this modified algorithm returns \textsc{success} then all counterexamples to HC-$\{\overline{K_3}\}$ that have an induced $H$ either satisfy $P$ or have one of the members of $\dom(H)$ as an induced subgraph.

\subsection{Combined Algorithm}

The algorithms in Section~\ref{sec:fourclique} can be easily incorporated into Algorithm~\ref{alg:dominating} to form Algorithm~\ref{alg:full}.

\begin{algorithm}[htb!]
    \KwIn{Monotone property $P$, graph $H$ that is known to be an induced subgraph of any counterexample to HC-$\{\overline{K_3}\}$, boolean $E$.}
    \If{$H$ satisfies $P$}{
        \Return{\textsc{success}}\;
    }
    Let $A\gets\{H\}$\;
    \While{$A$ is nonempty}{
        Let $H'\in A$\;
        Let $A\gets A\setminus\{H'\}$\;
        \If{$H'$ has a dominating edge}{
            Let $uv$ be a dominating edge of $H'$\;
            \For{$N\in 2^{V(H')\setminus \{u,v\}}$}{
                Let $G$ be $H'$ plus a new vertex $w$ such that the neighbors of $w$ are $N$\;
                \If{$G$ has no stable set of size 3, $G$ does not satisfy $P$, and $G$ is $A$-free}{
                    Let $A\gets A\cup\{G\}$\;
                }
            }
        }\Else{
            \For{$k\in\{3,2,1,0\}$ (in the order written)}{
                \If{Algorithm~\ref{alg:fourcliqueadvanced} returns \textsc{success} when given input $(P, H', k)$}{
                    \If{$H'\vee F_k$ has no stable set of size 3, $H'\vee F_k$ does not satisfy $P$, and $H'\vee F_k$ is $A$-free}{
                        Let $A\gets A\cup\{H'\vee F_k\}$\;
                    }
                    Break out of the inner \textbf{for} loop\;
                }
            }
            \If{the previous \textbf{for} loop was not broken out of}{
                \If{Algorithm~\ref{alg:fourclique} returns \textsc{success} when given input $(P\vee\neg\text{``$\dom(H')$-free''}, H')$}{
                    \For{$H''\in \dom(H')$}{
                        \If{$H''$ has no stable set of size 3, $H''$ does not satisfy $P$, and $H''$ is $A$-free}{
                            Let $A\gets A\cup\{H''\}$\;
                        }
                    }
                }\Else{
                    \If{$H'$ is not a counterexample to HC-$\{\overline{K_3}\}$}{
                        \For{$N\in 2^{V(H')}$}{
                            Let $G$ be $H'$ plus a new vertex $w$ such that the neighbors of $w$ are $N$\;
                            \If{$G$ has no stable set of size 3, $G$ does not satisfy $P$, and $G$ is $A$-free}{
                                Let $A\gets A\cup\{G\}$\;
                            }
                        }
                    }\Else{
                        \Return{\textsc{failure}}\;
                    }
                }
            }
        }
    }
    \Return{\textsc{success}}\;
    \caption{Prove that all counterexamples to HC-$\{\overline{K_3}\}$ satisfy monotone property $P$.}
    \label{alg:full}
\end{algorithm}

The correctness of this algorithm follows almost immediately from Lemmas~\ref{lem:dominatingalg},~\ref{lem:fourcliquealg},~and \ref{lem:fourcliqueadvancedalg}. There are just two parts of the algorithm that have not been explained. First, we add a check at the beginning of the algorithm, and if $H$ already satisfies $P$ then we immediately return success. Second, the final check performed is that if $H'$ is not a counterexample to HC-$\{\overline{K_3}\}$, then any counterexample to HC-$\{\overline{K_3}\}$ that has an induced $H'$ must actually have an $H'$ plus one vertex, though the edges incident to this vertex have no restriction other than that the resulting graph is $\overline{K_3}$-free and does not satisfy $P$.

In actuality, Algorithm~\ref{alg:full} never had to perform the final check on any input we have given it in the process of discovering Theorem~\ref{thm:main}. Thus we never actually implemented the final check. However, it is not difficult to construct examples of $H'$ such that the second-to-last check (Algorithm~\ref{alg:fourclique} run with modified $P$) returns \textsc{failure}; for instance this will happen if it is given the complement of any triangle-free 5-chromatic graph. The smallest such graphs have 22 vertices \cite{trianglefree5chromatic}. Thus if one was to extend our computations substantially farther, one might want to add this additional check.

This algorithm's behavior is dependent on the method of choosing $H'$ and a dominating edge $uv$ in cases that $A$ has more than one element or $H'$ has more than one dominating edge. There are theoretical reasons to prefer some methods over others, which we describe in the next subsection. We describe some miscellaneous implementation details in Appendix~\ref{app:implementation}.

\subsection{Behavior of the Algorithm}
\label{sec:behavior}

\subsubsection{Ordering}

As we mentioned, the intermediate steps of the algorithm depend on the method employed to choose $H'$ from $A$ and one dominating edge from $H'$ in the event there are multiple possibilities. In this paper we are only interested in deterministic ways of doing this. Let $\Ac$ be one particular implementation of Algorithm~\ref{alg:full}, i.e. one possible method of choosing graphs and edges. More precisely, $\Ac$ consists of a collection of (computable, deterministic) functions: one that chooses an element $H'\in A$, one that chooses a dominating edge $uv\in H'$ if $H'$ has at least one such edge, and some that determine the order to consider graphs in the innermost \textbf{for} loops. These functions are allowed to take as input auxiliary memory that may be modified at any point in the algorithm. For instance, $\Ac$ may remember some vertex labeling in graphs, remember the order that graphs were added or removed from $A$, remember the number of iterations of various loops, etc. and use this information to determine which graph or dominating edge to consider next.

Let us write $H\vdash_\Ac P$ (read ``$H$ proves $P$ (under implementation $\Ac$)'') if the $\Ac$ implementation of Algorithm~\ref{alg:full} returns \textsc{success} given input $(H, P)$, and write $H\vdash_\Ac G$ if $H\vdash_\Ac P$ when $P$ is the property of having an induced $G$. There are several highly desirable traits that we would like to be true:
\begin{enumerate}
    \item For any $H$, $H\vdash_\Ac H$, i.e. $\vdash_\Ac$ is reflexive.
    \item If $H\vdash_\Ac G$ and $G\vdash_\Ac P$ then $H\vdash_\Ac P$, i.e. $\vdash_\Ac$ is transitive.
    \item If $H$ is an induced subgraph of $H'$ and $H\vdash_\Ac P$ then $H'\vdash_\Ac P$, i.e. $\vdash_\Ac$ is monotone decreasing on the left with respect to taking induced subgraphs.
    \item If $G$ is an induced subgraph of $G'$ and $H\vdash_\Ac G'$ then $H\vdash_\Ac G$, i.e. $\vdash_\Ac$ is monotone increasing on the right with respect to taking induced subgraphs. More generally, if $P$ is a stricter property than $P'$ in the sense that any graph satisfying $P'$ also satisfies $P$, and $H\vdash_\Ac P'$, then $H\vdash_\Ac P$.
\end{enumerate}
This would make $\vdash_\Ac$, when restricted to the case that $P$ is the property of having an induced $G$, a quasi-order on graphs that respects the induced subgraph partial order. The first property is obviously true for any $\Ac$ due to the check at the very beginning of Algorithm~\ref{alg:full}. Additionally, there is a simple condition on $\Ac$ needed to make property (4) hold:

\begin{theorem}\label{thm:fullnice}
If $\Ac$ has the following properties, then if $P$ is a stricter property than $P'$ and $H\vdash_\Ac P'$, then $H\vdash_\Ac P$:
\begin{itemize}
    \item $\Ac$ always chooses the most recently added $H'$ from $A$,
    \item the order that graphs are added to $A$ depends only on the $H'$ chosen in that iteration of the \textbf{while} loop (not on $A$ or $P$ or anything else), and\footnote{Note that this is only a condition on the order that the graphs are considered. Obviously whether or not they are added to $A$ depends on $P$ and $A$, but they must be considered in the same order no matter what.}
    \item the dominating edge chosen when there are multiple possibilities depends on $H'$ only as well.
\end{itemize}
\end{theorem}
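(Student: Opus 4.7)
The plan is to couple the $P$-run and $P'$-run of Algorithm~\ref{alg:full} and proceed by strong induction on the number of iterations of the $P'$-run's main \textbf{while} loop. For the base case, if the $P'$-run returns \textsc{success} at the initial check (so $H$ satisfies $P'$), then since $P'$ implies $P$, $H$ also satisfies $P$ and the $P$-run returns \textsc{success} immediately. For the inductive step, the three conditions on $\Ac$ -- stack discipline (LIFO, hence DFS), ordering of candidate children depending only on the current graph $H'$, and dominating-edge selection depending only on $H'$ -- jointly force the two runs, whenever they are processing the same $H'$, to make identical structural decisions: same dominating edge (if one exists), same sub-branch of the no-dominating-edge case analysis, and the same ordered list of candidate children. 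The runs can therefore only diverge through the filters.

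\textbf{Invariant.} Throughout the coupled computation I would maintain the invariant that every graph in $A^P$ either lies in $A^{P'}$ or contains some graph in $A^{P'}$ as an induced subgraph. This instantly yields the theorem: when the $P'$-run terminates, $A^{P'}=\varnothing$, which by the invariant forces $A^P=\varnothing$, so the $P$-run terminates with \textsc{success} as well. Verifying the invariant through one \textbf{while}-loop iteration in the dominating-edge branch is direct: the same candidate children are enumerated in the same order; any child $G$ filtered under $P'$ by the ``satisfies $P'$'' check is filtered under $P$ too (since $P'\Rightarrow P$); and any child $G$ added to $A^P$ but not to $A^{P'}$ must have been rejected under $P'$ by the $A^{P'}$-free check via some $H''\in A^{P'}$ that $G$ contains, and this $H''$ serves as the witness making $G$ satisfy the invariant on the $A^P$ side.

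\textbf{Main obstacle.} The delicate case is the Algorithm~\ref{alg:fourcliqueadvanced} branch, where the chosen $k$ -- and hence the graph $H'\vee F_k$ added to $A$ -- can genuinely differ between the two runs. The key observation is that the graph constructed inside Algorithm~\ref{alg:fourcliqueadvanced} under the stricter $P$-filter is a vertex-subgraph of the one under $P'$ (fewer neighborhoods $N$ pass the ``does not satisfy $P$'' test to enter $A$), with a superset of the edges among the corresponding $u_i$ and $w_i$ vertices (more pairs satisfy the ``stable set of size $3$ or satisfies $P$'' condition and thus enter $B$ and $C$). Any four-clique cover of the $P'$-version witnessing success for some $k$ therefore restricts to a cover of the $P$-version with the same $k$, so the largest successful $k$ satisfies $k_P\ge k_{P'}$; using that $F_k$ is an induced subgraph of $F_{k+1}$ for $k\in\{0,1,2\}$ (verified by inspection of $K_1$, $\overline{K_2}$, $P_4$, $\overline{T_1}$), the graph $H'\vee F_{k_P}$ added to $A^P$ contains $H'\vee F_{k_{P'}}$ as an induced subgraph, preserving the invariant. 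The Algorithm~\ref{alg:fourclique}-with-modified-$P$ branch is handled analogously, and the direct counterexample-check branch is unaffected by the choice of $P$. A further subtlety is that the two DFS traversals may briefly desynchronize after an iteration in which the runs add different graphs to their stacks; this is managed by letting the $P$-run complete each subtree it enters before re-syncing with the $P'$-run, which is possible precisely because of the stack discipline demanded of $\Ac$.
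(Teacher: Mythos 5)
Your overall strategy --- couple the two runs, exploit the LIFO/DFS discipline, and induct over iterations --- is the same as the paper's, but your central invariant is too weak to deliver the conclusion, and the step that papers over this (``letting the $P$-run complete each subtree it enters before re-syncing'') assumes exactly what is to be proved. The invariant ``every graph in $A^P$ lies in $A^{P'}$ or contains a member of $A^{P'}$ as an induced subgraph'' is compatible with the $P$-run pushing onto its stack a graph $G$ that the $P'$-run never processes; you explicitly allow this in two places (a child $G\in A^P\setminus A^{P'}$ that was rejected under $P'$ only by the $A^{P'}$-free check, and the graph $H'\vee F_{k_P}$ with $k_P>k_{P'}$). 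The $P$-run must then recurse on $G$, and the finite $P'$-computation says nothing about the subtree rooted at $G$; nothing in your argument rules out that this subtree is infinite (the class of $\overline{K_3}$-free graphs failing $P$ is infinite in general, so there is no external bound). Maintaining the invariant forever is perfectly consistent with $A^P$ never emptying, so ``$A^{P'}=\varnothing$ forces $A^P=\varnothing$'' applies only if you already know the $P$-run reaches the end of the coupling, i.e.\ terminates. The same issue infects the \textsc{failure} branch: a graph visited only by the $P$-run could in principle trigger the final \textsc{failure} return, and containing some $H''\in A^{P'}$ does not preclude that.

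The paper avoids this by proving the stronger statement that \emph{every} graph the $P$-run ever adds to $A$ is also added by the $P'$-run, in the same relative order (an order-preserving injection of snapshots with $A_i\subseteq f(A_i)$), so the $P$-run's computation embeds into the finite $P'$-computation and termination is immediate. The key fact missing from your dominating-edge case is that a candidate added under $P'$ but omitted under $P$ must have been omitted because it \emph{satisfies} $P$: the stable-set check is identical, and the $A^P$-free check follows from the $A^{P'}$-free check once one maintains $A^P\subseteq A^{P'}$. By monotonicity of $P$, any later candidate containing such a graph also satisfies $P$ and is therefore filtered out under $P$ as well, so no candidate enters $A^P$ that does not enter $A^{P'}$; this closes the case you left open. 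Your analysis of the Algorithm~\ref{alg:fourcliqueadvanced} branch ($k_P\ge k_{P'}$ via the vertex-subgraph/edge-superset comparison of the auxiliary graphs, and $F_0\subseteq F_1\subseteq F_2\subseteq F_3$ as induced subgraphs) is accurate, and it flags a genuine divergence that the paper's own proof does not explicitly confront; but identifying the divergence is not the same as repairing it, and your invariant does not repair it.
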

\begin{proof}
Suppose $H\vdash_\Ac P'$, and consider the set of graphs $S'$ that are ever a member of $A$ when the algorithm is run with input $(H, P')$. Since $H\vdash_\Ac P'$, this set is finite. Construct a directed acyclic graph $D'$ with vertex set $S'$ and an edge from $v$ and $u$ if $v$ is an induced subgraph of $u$. Then the order that graphs are removed from $A$ corresponds to a depth-first search on $D'$, by virtue of always choosing the latest-added graph to $A$ in each iteration.

If $P$ is a stricter property than $P'$, then the corresponding set $S$ when the algorithm is run with input $(H, P)$ is a subset of $S'$, and the corresponding directed acyclic graph $D$ is a subgraph of $D'$. To see this, consider the sequences $\Sc_1=\{A_1,\dots,A_k\}$ and $\Sc_2=\{A_1',\dots,A_\ell'\}$ consisting of the set $A$ snapshotted at the beginning of each iteration of the main \textbf{while} loop when the algorithm is run with input $(H, P)$ and $(H, P')$, respectively.

We will show there is a injection $f:\Sc_1\to \Sc_2$ such that $A\subset f(A)$ for all $A\in\Sc_1$. More strongly, the relative order that graphs were added to $A$ is the same as the relative order they were added in $f(A)$. This injection is constructed as follows: $f(A_i)$ is the $A_j'$ such that if $H_i\in A_i$ is the graph chosen by $\Ac$ at the beginning of the \textbf{while} loop at that iteration (i.e. the most recently added graph to $A$ at that point in time), then $H_i$ is also the graph chosen in $A_j'$. To see that this is well-defined and has the desired properties, we use induction. This is obviously true for $A_1=\{H\}=A_1'=f(A_1)$.

To see that $f(A_{i+1})$ exists and has the desired properties given $f(A_i)$ does, either $H_{i+1}$ is in $A_i$ or not. If it is, then $H_{i+1}$ is also in $A_j'=f(A_i)$. On some iteration $j_2>j$ of the algorithm run on $(H, P')$, $H_{i+1}$ must be the most recently added graph in $A_{j_2}$. This is the chosen value of $f(A_{i+1})$. The fact that the rest of $A_{i+1}$ is in $f(A_{i+1})$ with the same relative order follows from inductive hypothesis because the rest of $A_{i+1}$ is older than $H_{i+1}'$ and also appears in $A_i$ in the same relative order. Thus the rest of $A_{i+1}$ appears in $f(A_i)$ and, since the rest is older than $H_{i+1}'$, is untouched until after iteration $j_2$.

If $H_{i+1}$ is not in $A_i$, then it was added to $A$ in iteration $i$ under input $(H, P)$, possibly alongside some other graphs. Therefore if $A_j'=f(A_i)$, $H_{i+1}$ was added to $A$ in iteration $j$ under input $(H, P')$, along with the same other graphs as in input $(H, P)$ in the same order, plus possibly some additional graphs interspersed with these (since $P$ is stricter than $P'$). Thus at some iteration $j_2>j$, $H_{i+1}$ will be the most recently added graph under input $(H, P')$, and the rest of the logic follows similarly to the case in the previous paragraph. This completes the inductive step.

Thus we can conclude $S\subseteq S'$ and, more weakly, $H\vdash_\Ac P$.
\end{proof}

Unfortunately, we are unable to find a version of Algorithm~\ref{alg:full} that makes all of the remaining properties true. However, restricted to only the constituent parts of Algorithm~\ref{alg:full} (namely Algorithms~\ref{alg:dominating} and~\ref{alg:fourcliqueadvanced}), we can come up with results close to the above properties. We write $H\vdash_e P$ if either $H$ satisfies $P$ or Algorithm~\ref{alg:dominating} returns \textsc{success} given input $(H, P)$ and the implementation described in the following two paragraphs. Similarly we write $H\vdash_c P$ if either $H$ satisfies $P$ or Algorithm~\ref{alg:fourclique} returns \textsc{success} given input $(H, P)$. Note that the output of Algorithm~\ref{alg:fourclique} does not depend on implementation details.

There is a method of choosing dominating edges and elements of $A$ that makes $\vdash_e$ have these properties. Namely, we first label the vertices of $H$ with integers 1 through $\abs{V(H)}$. Whenever a vertex is added to a graph, it gets the smallest integer label not yet used in that graph. For $H'\vee F_k$, the labels of the vertices in the $F_k$ part are the next $\abs{V(F_k)}$ integers, and relative ordering of vertices in the $F_k$ part is always the same (though this is not strictly necessary). Edges $uv$ are identified with the tuple $(\max(u,v), \min(u,v))$, where vertices are identified by their label. The chosen dominating edge is the minimum edge under the lexicographical ordering with this identification. In essence, the dominating edge chosen is the one whose younger endpoint is as old as possible, breaking ties by looking at the other endpoint.

Next, whenever multiple graphs are to be added to $A$, notice that they are all just $H'$ plus one vertex, possibly with some restrictions on the neighbors of the new vertex. We add these to $A$ in the order such that if $N_1$ and $N_2$ are two different sets of the labels of the neighbors of the newly-added vertex, the graph corresponding to $N_1$ is added before $N_2$ iff, after sorting $N_1$ and $N_2$ and writing them as tuples, $N_1$ comes lexicographically after $N_2$. If $N_1$ is a prefix of $N_2$, it comes after $N_2$; otherwise the one that comes second is the one with the smallest element in the symmetric difference of $N_1$ and $N_2$.

\begin{theorem}\label{thm:nicedominating}
Under the method of choosing dominating edges and graphs described in the previous paragraphs:
\begin{enumerate}
    \item If $H\vdash_e G$ and $G\vdash_e P$ then $H\vdash_e P$.
    \item If $H$ is an induced subgraph of $H'$ and $H\vdash_e P$ then $H'\vdash_e P$.
    \item If $H\vdash_e P$ then $H\vdash_\Ac P$, where the method for choosing graphs and dominating edges in $\Ac$ is also as in the previous paragraphs.
\end{enumerate}
\end{theorem}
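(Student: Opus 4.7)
The plan is to view Algorithm~\ref{alg:dominating} under the specified ordering as a deterministic depth-first tree traversal: the \emph{execution tree} $T(H, P)$ has $H$ at its root, each internal node is a graph popped from $A$ at some iteration of the \textbf{while} loop, and its children are the graphs enqueued during that iteration. Success of the algorithm is equivalent to finiteness of $T(H, P)$. I would prove the three parts in the order (2), (3), (1), since parts (1) and (3) make use of (2) either directly or implicitly.

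For part (2), I would induct on $|V(H')| - |V(H)|$, reducing to the case where $H' = H + w$ for a single new vertex $w$, which (by the labelling convention) receives the label $|V(H)|+1$. Assuming $H \vdash_e P$, I would couple the execution on $(H', P)$ with that on $(H, P)$ by maintaining, as an invariant, that each graph $G'$ popped from $A$ in the $(H', P)$ execution is an induced supergraph of a graph $G$ popped in the $(H, P)$ execution, with $w$ identified to the vertex of $G'$ that carries label $|V(H)|+1$. The tie-breaker ``youngest endpoint as old as possible'' ensures the $(H', P)$ execution prefers dominating edges inside $V(H)$, and when such an edge remains dominating in $G'$, the lexicographic child-enqueuing rule guarantees that every child enqueued in the $(H', P)$ execution is an induced supergraph of the corresponding child enqueued in the $(H, P)$ execution. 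The main obstacle will be the case in which a dominating edge $uv \subseteq V(H)$ of $G$ is \emph{not} dominating in $G'$ because $w$ is nonadjacent to both $u$ and $v$: here I would argue that $w$ already plays the role of the ``missing vertex'' that the $(H, P)$ execution would have introduced as a child of $G$ under the edge $uv$, so the induction hypothesis can be invoked on a shallower subtree of $T(H, P)$ to continue the coupling. Once the coupling is established, finiteness of $T(H, P)$ transfers to finiteness of $T(H', P)$.

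For part (3), the observation is that success of Algorithm~\ref{alg:dominating} on $(H, P)$ implies every graph ever popped from $A$ that does not satisfy $P$ must possess a dominating edge; otherwise Algorithm~\ref{alg:dominating} would have immediately returned \textsc{failure}. Because the implementation $\Ac$ of Algorithm~\ref{alg:full} is assumed to use the same rules for choosing graphs and dominating edges, and because the ``\textbf{if} $H'$ has a dominating edge'' branch is the first branch taken in Algorithm~\ref{alg:full}, the execution of $\Ac$ on $(H, P)$ follows exactly the same steps as Algorithm~\ref{alg:dominating} on $(H, P)$; the Algorithm~\ref{alg:fourcliqueadvanced} and Algorithm~\ref{alg:fourclique} sub-cases are never reached. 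Hence $H \vdash_e P$ implies $H \vdash_\Ac P$.

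For part (1), I would run Algorithm~\ref{alg:dominating} on $(H, P)$ and compare the resulting tree with $T(H, \text{``contains induced $G$''})$. The two executions agree at every node whose graph does not contain an induced $G$, since the only difference between the two inputs is the property check. Because $H \vdash_e G$, the tree $T(H, \text{``contains induced $G$''})$ is finite, so only finitely many nodes appear in the $(H, P)$ execution that do not contain an induced $G$; each of the remaining ``branch points'' is some graph $G^*$ with $G \subseteq_{\mathrm{ind}} G^*$. By part (2) applied to $G \vdash_e P$, we have $G^* \vdash_e P$, so the subtree of the $(H, P)$ execution rooted at $G^*$ is finite. Splicing these finitely many finite subtrees onto the finite outer tree produces a finite $T(H, P)$, giving $H \vdash_e P$.
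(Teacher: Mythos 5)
Your overall architecture (reduce (2) to single-vertex extensions, get (1) by splicing, (3) immediate) is reasonable, and your part (3) matches the paper's. But your route for (2) and (1) --- a literal coupling of execution trees --- is genuinely different from the paper's, and as written it has two gaps. First, your coupling invariant (``each graph popped in the $(H',P)$ run is an induced supergraph of a graph popped in the $(H,P)$ run'') ignores the ``$G$ is $A$-free'' pruning step: a child may be discarded in one run because it contains a graph already in $A$ while its counterpart in the other run is retained, since the two runs carry different sets $A$. Second, and more seriously, your re-anchoring move in the case where $w$ kills the chosen dominating edge $uv$ identifies $G+w$ with the child of $G$ in $T(H,P)$ having the same attachment --- but these are only \emph{isomorphic}, not identically labelled: in $T(H,P)$ the new vertex receives label $\abs{V(G)}+1$, whereas $w$ carries label $\abs{V(H)}+1$ and the intermediate vertices are shifted. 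Since the edge-selection and child-enqueuing rules are defined on labels, the two subtrees need not evolve identically, so ``invoke the induction hypothesis on a shallower subtree'' does not directly apply. The same labelling/ambient-$A$ mismatch affects your splicing step in (1): $G^*\vdash_e P$ asserts termination of a \emph{fresh} run started from $G^*$, not termination of the subtree rooted at $G^*$ inside the $(H,P)$ execution. Your claim there that the $(H,P)$ and $(H,\text{``contains }G\text{''})$ runs ``agree'' at nodes without an induced $G$ also fails literally, since the two filters discard different children.

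The paper sidesteps all of this by not tracking the execution at all. It defines $S(H)$ as the set of \emph{minimal} extensions of $H$ in which no dominating edge of $H$ remains dominating, observes that the edge-ordering forces the algorithm to exhaust the dominating edges of the base graph before touching newer edges (so that termination is governed by the iterates $S_P^i(H)$ becoming empty), and then proves (2) from the purely structural fact that every member of $S_P^i(I)$ contains a member of $S_P^i(H)$ when $H$ is an induced subgraph of $I$, and (1) from the composition identity $S_P^{a+b}(H)=\bigcup_{I\in S_P^a(H)}S_P^b(I)$. These sets are labelling-independent and pruning-independent, which is exactly what your coupling lacks. Your plan could likely be repaired by replacing the node-by-node coupling with such a structural invariant, but as it stands the two obstacles above are unresolved.
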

\begin{proof}
First, (3) is obvious since Algorithm~\ref{alg:full} performs exactly the same computations as Algorithm~\ref{alg:dominating} given that implementation $\Ac$ and the fact that $H\vdash_e P$.


For a particular graph $H$, consider the set $S(H)$ of minimal graphs $H'$ containing $H$ so that no dominating edge of $H$ is a dominating edge of $H'$. If $H\vdash_e P$, then for each $H'\in S(H)$, either $H'$ satisfies $P$ or $H'$ has a dominating edge, because if $H'$ did not have a dominating edge and did not satisfy $P$, then Algorithm~\ref{alg:dominating} would construct it from $H$ and then return \textsc{failure}. In particular, because the order for choosing dominating edges considers all dominating edges of $H$ before any edges introduced by adding vertices to $H$, all members $H'$ of $S(H)$ will be constructed by the algorithm unless a proper induced subgraph of $H'$ satisfies $P$. Let $S_P(H)$ be the subset of $S(H)$ that does not satisfy property $P$. 

If $H\vdash_e P$, then for each $H'\in S_P(H)$, for each $H''\in S(H')$, either $H''$ satisfies $P$ or $H''$ has a dominating edge. We continue in this manner, but because $H\vdash_e P$, the algorithm terminates at some point, and there must be a $k$ such that $S_P^k(H)$ is empty, where $S_P^0(H)=\{H\}$ (unless $H$ already satisfies $P$, in which case $S_P^0(H)=\varnothing$) and
\[ S_P^i(H) = \bigcup_{H'\in S_P^{i-1}(H)} S_P(H') \]
for $i\ge 1$.

Now suppose $H$ is an induced subgraph of $I$ and $H\vdash_e P$. Then for any graph $I'\in S(I)$, there is a graph $H'\in S(H)$ so that $H'$ is an induced subgraph of $H'$. This is because in $I'$ none of the dominating edges of $I$ are dominating, which means none of the dominating edges of the $H$ contained in the $I$ are dominating in $I'$, so $I'$ must contain one of the graphs in $S(H)$. Similarly, for any $I'\in S_P(I)$, there is an $H'\in S_P(H)$ such that $H'$ is an induced subgraph of $I'$. Inductively one can then see that for any $i$ and $I'\in S_P^i(I)$, there is an $H'\in S_P^i(H)$ such that $H'$ is an induced subgraph of $I'$. Setting $i=k$, since $S_P^k(H)$ is empty, $S_P^k(I)$ must also be empty, which means $I\vdash_e P$. This is (2).

For (1), let $k_G$ be the minimum $k$ so that $S_P^k(G)=\varnothing$ and $k_H$ the minimum $k$ so that $S_G^k(H)=\varnothing$, where the subscript $G$ means to consider the property of having an induced $G$. Any graph $I\in S_P^{k_H}(H)$ has an induced $G$ and therefore (by the reasoning in the previous paragraph), $S_P^{k_G}(I)=\varnothing$. But it is to prove by induction on $b$ that
\[ S_P^{a+b}(H) = \bigcup_{I\in S_P^a(H)} S_P^b(I), \]
which proves (1) by setting $a=k_H$ and $b=k_G$.

\end{proof}

Similarly, for Algorithm~\ref{alg:fourclique}:

\begin{theorem}\label{thm:nicefourclique}
If $H$ is an induced subgraph of $H'$, $H\vee K_1$ satisfies $P$, and $H\vdash_c P$ then $H'\vdash_c P$.
\end{theorem}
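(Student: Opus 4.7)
The plan is to build an explicit four-clique cover of $G_{H'}$ (the graph produced by Algorithm~\ref{alg:fourclique} on input $(P,H')$) by lifting the cover of $G_H$ guaranteed by $H\vdash_c P$. First I handle the trivial cases: if $H$ satisfies $P$, then $H'$ does too by monotonicity and the induced-subgraph containment, so $H'\vdash_c P$; if $H'$ satisfies $P$ directly, the same conclusion holds by definition. Hence I may assume neither $H$ nor $H'$ satisfies $P$, so the algorithm succeeds on $H$ and produces a cover $\{Q_1,Q_2,Q_3,Q_4\}$ of $G_H$ with $\sum_i\abs{Q_i\cap V(H)}\ge\abs{H}+2$; and I may assume (as is standard throughout this paper) that $H'$ is $\overline{K_3}$-free.

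The crux of the hypothesis $H\vee K_1$ satisfies $P$ is to align the vertex set of $G_{H'}$ with that of $G_H$. Since $P$ is monotone, $H'\vee K_1$ satisfies $P$ too. I claim that for every $v\in V(H')\setminus V(H)$, the restriction $N_v:=N_{H'}(v)\cap V(H)$ lies in $A_H$: it cannot equal $V(H)$ (else $H'$ would contain an induced $H\vee K_1$ and hence satisfy $P$), and the induced subgraph of $H'$ on $V(H)\cup\{v\}$ directly certifies the two conditions defining $A_H$. By the identical argument (applied inside $H'+u_{N'}$ in place of $H'$), $N'\cap V(H)\in A_H$ for every $N'\in A_{H'}$.

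With these identifications I define
\[
Q_i' := (Q_i\cap V(H)) \,\cup\, \{v\in V(H')\setminus V(H) : u_{N_v}\in Q_i\} \,\cup\, \{u_{N'}\in V(G_{H'}) : u_{N'\cap V(H)}\in Q_i\}.
\]
Coverage of $G_{H'}$ is automatic since each $v\in V(H')\setminus V(H)$ and each $u_{N'}\in V(G_{H'})$ is placed into whichever $Q_i'$ contains its ``shadow'' in $G_H$, and $\{Q_1,\ldots,Q_4\}$ covers $V(G_H)$. The sum inequality also drops out immediately, because each vertex of $V(H')\setminus V(H)$ is placed in at least one $Q_i'$:
\[
\sum_{i=1}^4\abs{Q_i'\cap V(H')}\;\ge\;\sum_{i=1}^4\abs{Q_i\cap V(H)}+\abs{V(H')\setminus V(H)}\;\ge\;(\abs{H}+2)+(\abs{H'}-\abs{H})\;=\;\abs{H'}+2.
\]

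The main obstacle is verifying that each $Q_i'$ is actually a clique of $G_{H'}$. Edges with at least one endpoint in $Q_i\cap V(H)$ follow for free from the clique property of $Q_i$ in $G_H$, exactly as in the proof of Lemma~\ref{lem:fourcliquealg}. The delicate cases are pairs $a,b$ of non-$H$ vertices that land in the same $Q_i'$; here I reverse the direction of the argument used in Lemma~\ref{lem:fourcliquealg}. Writing $N,M\in A_H$ for their restrictions to $V(H)$, either $N=M$, in which case any $h\in V(H)\setminus N$ together with a hypothetical nonedge $ab$ produces an induced $\overline{K_3}$ in the ambient graph ($H'$, $H'+u_{N'}$, or $H'+\{u_{N'},u_{N''}\}$ depending on whether each of $a,b$ is original or auxiliary), or $N\ne M$, in which case $\{N,M\}\in B_H$ (because $u_N,u_M$ both sit in the clique $Q_i$), so the induced subgraph on $V(H)\cup\{a,b\}$ is precisely a configuration the algorithm certified forces either a stable set of size $3$ or property $P$. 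In every case the conclusion contradicts either $\overline{K_3}$-freeness of $H'$ or the fact that $H'$ (respectively $H'+u_{N'}$, since $N'\in A_{H'}$) does not satisfy $P$, so $a$ and $b$ must be adjacent. Once this case analysis is done cleanly, Algorithm~\ref{alg:fourclique} succeeds on $H'$ via the cover $\{Q_1',Q_2',Q_3',Q_4'\}$, so $H'\vdash_c P$.
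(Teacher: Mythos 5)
Your proof is correct and follows essentially the same route as the paper's: use the hypothesis that $H\vee K_1$ satisfies $P$ to identify each neighborhood in $A_{H'}$ with its restriction to $V(H)$, which lies in $A_H$, and then lift the four-clique cover of $G_H$ to one of $G_{H'}$, verifying adjacency of lifted pairs by running the defining conditions of $A$ and $B$ in reverse. The only differences are that you perform the lift from $H$ to $H'$ in a single step where the paper treats the one-added-vertex case and then inducts on $\abs{V(H')}$, and that you make explicit the (genuinely needed, though unstated in the theorem) standing assumption that $H'$ is $\overline{K_3}$-free.
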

\begin{proof}
Note if $H'$ has $H\vee K_1$ as an induced subgraph, $H'\vdash_c P$ is immediate. If not, then consider first the case that $H'$ is $H$ plus one vertex $v$, and consider the sets $A$ and $B$ constructed for the algorithm given input $(H, P)$ and corresponding sets $A'$ and $B'$ constructed given input $(H', P)$. Then since $H\vee K_1$ satisfies $P$, we find every member of $A'$ is either $N_i$ or $N_i'=N_i\cup \{v\}$ for some $N_i\in A$. Also, if the neighbors of $v$ are $N$, then $N\ne V(H)$, and $N$ is not in $A'$ (only possibly $N\cup \{v\}$) due to the restriction that there are no stable sets of size 3.

Now the following are members of $B'$, assuming the relevant sets are members of $A'$:
\begin{itemize}
    \item $\{N_i, N_i'\}$, since there is some $v\in H$ with $v\not\in N_i$ and $v\not\in N_i'$.
    \item $\{N_i, N_j\}$, $\{N_i, N_j'\}$, and $\{N_i', N_j'\}$ if $\{N_i, N_j\}\in B$, since if $H$ plus two vertices with neighbor sets $N_i$ and $N_j$ has a stable set of size 3 or satisfies $P$, $H'$ plus two vertices with neighbor sets ($N_i$ or $N_i'$) and ($N_j$ or $N_j'$) also has a stable set of size 3 or satisfies $P$, which can be seen just by deleting $v$ from the graph after adding the two new vertices.
\end{itemize}

Thus if $Q_1,Q_2,Q_3,Q_4$ are four cliques covering $G=H\cup\{u_i\}$ with $\sum_{i=1}^4\abs{Q_i\cap V(H)}\ge \abs{H}+2$, then there are $Q_1',Q_2',Q_3',Q_4'$ are four cliques covering $G'=H'\cup\{u_i\}\cup\{u_i'\}$ with $\sum_{i=1}^4\abs{Q_i'\cap V(H')}\ge \abs{H'}+2$. Let the neighbors of $v$ in $H$ be $N$, which we can take without loss of generality to be in $Q_1$. Then set
\[ Q_i'=\{u_i\mid u_i\in Q_i\}\cup \{u_i'\mid u_i\in Q_i\}\cup \begin{cases}\{v, u'\}&\text{if $i=1$} \\ \varnothing & \text{otherwise} \end{cases} \]
and $u_i'$ is the vertex in $G'$ corresponding to set $N_i'\in A'$ and $u'$ the vertex corresponding to $N\cup \{v\}$ where $N$ is the neighbor set of $v$ (unless $N\cup \{v\}\not\in A'$, in which case just drop $u'$ from $Q_1'$). The fact these are cliques follows from $Q_i$ being cliques and each of the pairs of sets noted above being in $B'$, and
\begin{align*}
    \sum_{i=1}^4\abs{Q_i'\cap V(H')} &\ge 1 + \sum_{i=1}^4\abs{Q_i'\cap V(H)} & \text{($Q_1$ contains $v$)} \\
    &\ge 1+\sum_{i=1}^4\abs{Q_i\cap V(H)} \\
    &\ge \abs{V(H)}+3 \\
    &= \abs{V(H')}+2.
\end{align*}

The full theorem statement then follows easily by induction on $\abs{V(H')}$.
\end{proof}

Unfortunately, it does not seem like one can generalize the above theorem to be valid for Algorithm~\ref{alg:fourcliqueadvanced} run with the third input $k>0$. For instance, one can verify that the algorithm returns \textsc{success} on the input $(F_1, F_1\vee F_2, 2)$, but \textsc{failure} on $(F_2, F_2\vee F_1, 1)$. Therefore if one wanted to prove HC-$\{\overline{K_3}, F_1\vee F_2\}$ using Algorithm~\ref{alg:fourcliqueadvanced}, you would succeed if you started from $F_1$ but not $F_2$, despite $F_2$ containing $F_1$. There are additional complications if one wanted to combine Theorems~\ref{thm:nicedominating} and~\ref{thm:nicefourclique} to say something about the behavior of Algorithm~\ref{alg:full}, but this specific failure means that there is probably no implementation or simple modification to Algorithm~\ref{alg:full} that makes all the desired properties hold unless one severely weakened the algorithm by removing the calls to Algorithm~\ref{alg:fourcliqueadvanced} with $k>0$ and the additional checks if Algorithm~\ref{alg:fourcliqueadvanced} fails.

\subsubsection{Limitations}
\label{sec:limitations}

We begin this section with a simple observation:

\begin{proposition}\label{prop:complete}
If HC-$\{\overline{K_3}, K_n\}$ holds for some $n$, the $H\vdash_\Ac K_n$ for all $H$ and $\Ac$.
\end{proposition}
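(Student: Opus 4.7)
The plan is to show that on input $(H, P)$, where $P$ is the monotone property of containing $K_n$ as an induced subgraph, Algorithm~\ref{alg:full} both terminates and never returns \textsc{failure}, and hence returns \textsc{success}; this gives $H\vdash_\Ac K_n$ for every starting graph $H$ and every implementation $\Ac$.

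First, I would establish the invariant that every graph ever added to $A$ after the initial assignment is both $\overline{K_3}$-free and $K_n$-free. This follows from direct inspection of the four branches of the main \textbf{while} loop, each of which adds $G$ only when ``$G$ has no stable set of size 3'' and ``$G$ does not satisfy $P$.'' Since $R(3,n)$ is finite by Ramsey's theorem, every such graph has at most $R(3,n)-1$ vertices; only the initial $H$ itself might violate this bound.

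For termination, I would then observe that in every branch the graphs added while processing $H'$ are obtained by attaching at least one fresh vertex to $H'$, so they strictly contain $H'$ as a proper induced subgraph (in the dominating-edge and exhaustive branches one vertex is attached, in the $H'\vee F_k$ branch at least one vertex is attached, and in the $\dom(H')$ branch each element is $H'$ plus one vertex). Viewing the execution as a tree with root $H$ and with children of $H'$ equal to the graphs added while $H'$ is processed, every non-root node has at most $R(3,n)-1$ vertices and strictly more vertices than its parent, so the tree has finite depth (at most $R(3,n)-|H|$) and finite branching at each node, hence finitely many iterations of the \textbf{while} loop.

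To rule out \textsc{failure}, I would note that the \textsc{failure} branch fires only when some $H'\in A$ has no dominating edge, all calls to Algorithms~\ref{alg:fourclique} and~\ref{alg:fourcliqueadvanced} fail, and $H'$ is itself a counterexample to HC-$\{\overline{K_3}\}$. Either $H'$ is $\{\overline{K_3}, K_n\}$-free by the invariant, in which case the hypothesis HC-$\{\overline{K_3}, K_n\}$ directly gives $h(H')\ge \chi(H')$, so $H'$ is not a counterexample; or $H'=H$ fails to be $\overline{K_3}$-free, in which case $H'$ is vacuously not a counterexample to HC-$\{\overline{K_3}\}$ since that notion is restricted to $\overline{K_3}$-free graphs. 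Either way \textsc{failure} is never triggered, so the algorithm returns \textsc{success}. The only obstacle is the routine bookkeeping needed to verify, for each of the four branches of Algorithm~\ref{alg:full}, that the children are strictly larger than the parent and that the two invariants really are enforced.
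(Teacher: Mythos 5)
Your proof is correct and follows essentially the same route as the paper, which disposes of the proposition in two sentences: Ramsey's theorem bounds the $\{\overline{K_3},K_n\}$-free graphs to force termination, and the hypothesis HC-$\{\overline{K_3},K_n\}$ rules out the \textsc{failure} branch. Your version merely makes explicit the invariant and the bounded-depth tree argument that the paper leaves implicit.
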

\begin{proof}
By Ramsey's Theorem there are only finitely many $\{\overline{K_3}, K_n\}$-free graphs, so Algorithm~\ref{alg:full} must terminate. If HC-$\{\overline{K_3}, K_n\}$ holds then Algorithm~\ref{alg:full} will not return \textsc{failure}, so it must return \textsc{success}.
\end{proof}

For some $H$ and $G$, we can actually prove the reverse: there is no $\Ac$ for which $H\vdash_\Ac G$. Given a graph $I$, define the \textit{$I$-core} of a graph $H$ to be the subgraph of $H$ induced by all vertices that are in an induced $G$ in $H$. Then:

\begin{theorem}\label{thm:corefailure}
If $H\vdash_\Ac G$ for some $\Ac$, then the $I$-core of $G$ is an induced subgraph of the $I$-core of $H$, where $I$ is either:
\begin{enumerate}
    \item $\overline{C_4}$,
    \item any graph with minimum codegree at least 3, i.e. the complement of a graph with minimum degree at least 3.
\end{enumerate}
\end{theorem}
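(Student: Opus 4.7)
The plan is to prove the theorem by induction on the execution tree of Algorithm~\ref{alg:full}, maintaining the invariant that for every graph $H^*$ ever placed in $A$, the $I$-core of $H^*$ admits an induced-subgraph embedding into the $I$-core of $H$. The conclusion then follows immediately: the algorithm returns \textsc{success} only if every branch of the execution tree terminates at a graph $H^*$ satisfying $P$, i.e., containing $G$ as an induced subgraph; hence the $I$-core of $G$ embeds as an induced subgraph of the $I$-core of $H^*$, which by the invariant embeds into the $I$-core of $H$.

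To establish the inductive step I would analyze each move type of the algorithm. For the dominating-edge rule (adding $w$ non-adjacent to both endpoints $u, v$ of a dominating edge of $H^*$), the key local claim is a shadowing statement: any induced $I$-copy in $H^* + w$ that uses $w$ admits a corresponding induced $I$-copy in $H^*$ obtained by replacing $w$ with a vertex already in $H^*$. In case (2), where $I$ has minimum codegree $\geq 3$, the new vertex $w$ has at least three non-neighbors $t_1, t_2, t_3$ in the $I$-copy; applying $\overline{K_3}$-freeness to the triples $\{w, t_i, t_j\}$, $\{w, t_i, u\}$, and $\{w, t_i, v\}$ forces $\{t_1, t_2, t_3\}$ to be a triangle and each $t_i$ to be adjacent to both $u$ and $v$, producing a $K_5$ on $\{u, v, t_1, t_2, t_3\} \subseteq V(H^*)$. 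A suitable swap of $w$ with one of $u, v$, or $t_i$ then preserves the induced $I$-structure, giving the shadow copy. In case (1), where $I = \overline{C_4} = 2K_2$, the parallel analysis uses that in an induced $2K_2$ $\{w, b, c, d\}$ with edges $wb$ and $cd$, the matching partners $c, d$ lie in $N(u) \cap N(v) \cup \{u, v\}$, after which direct casework on where $c$ and $d$ sit produces a shadow copy. The coning moves $H^* \vee F_k$ and the $\dom$-rule moves are handled by similar but shorter arguments, using the small size of $F_k$ (at most six vertices) and the highly constrained adjacency of the new vertices to rule out any genuinely new contribution to the $I$-core.

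The main obstacle is that the naive pointwise version ``no new vertex $w$ is ever in the $I$-core of $H^* + w$'' is false: small explicit examples show that a single dominating-edge extension can introduce a genuinely new induced copy of $I$ containing $w$. This forces the invariant to be stated in terms of the \emph{existence} of an embedding that is allowed to reindex vertices, and the embeddings produced by individual moves must be shown to compose properly so that the ultimate embedding into the $I$-core of $H$ is well-defined and induced. Verifying compatibility of these shadow swaps---particularly that they can be performed simultaneously for all induced $I$-copies along an entire branch of the execution tree---is where the bulk of the technical work lies, and where the precise assumptions on $I$ (namely $I = \overline{C_4}$ or $I$ having minimum codegree $\geq 3$) are exactly what is required to make the swap go through in every case.
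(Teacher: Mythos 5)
Your proposed invariant---that \emph{every} graph ever placed in $A$ has an $I$-core admitting an induced embedding into the $I$-core of $H$---is false, and this is a genuine gap rather than a technical inconvenience to be engineered around. Concretely, take $I=\overline{C_4}=2K_2$ and $H=W_5$ (hub $6$, cycle $1$-$2$-$3$-$4$-$5$), whose $\overline{C_4}$-core is empty ($C_5$ has no induced $2K_2$ and the hub has no nonneighbor). Processing the dominating edge $16$, the algorithm generates and retains the extension $w$ with $N(w)=\{2,3,4\}$ (this is exactly ``case 2'' in the paper's proof that HC-$\{\overline{K_3},I_3\}$ holds); in $W_5+w$ the set $\{w,3,1,5\}$ induces a $2K_2$, so the $\overline{C_4}$-core jumps from empty to nonempty. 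No ``reindexed'' embedding can repair this, and your shadowing lemma cannot hold here because there is no induced $2K_2$ in $W_5$ to shadow to. Your case-(2) swap also fails locally: the $\overline{K_3}$-freeness analysis does produce a clique on $\{u,v,t_1,t_2,t_3\}$, but the $I$-copy requires $w$ to be \emph{non}adjacent to the $t_i$, so none of $u$, $v$, $t_i$ can be substituted for $w$ while preserving the induced structure. Note that none of this contradicts the theorem: the statement compares the cores of $G$ and $H$ only, and intermediate graphs are permitted to have strictly larger cores.

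The paper's proof sidesteps all of this by proving the contrapositive and tracking only a \emph{single} branch. Among the extensions the algorithm is forced to consider is the one with $N=V(H')\setminus\{u,v\}$, for which the new vertex $w$ has codegree exactly $2$ with its two nonneighbors $u,v$ adjacent; such a $w$ lies in no induced $\overline{C_4}$ (its two nonneighbors in the copy would have to be $\{u,v\}$, forcing its matching partner to be nonadjacent to both and contradicting domination) and in no induced copy of a graph of minimum codegree $3$. Hence this particular child has the same $I$-core as its parent, and the invariant maintained is merely that $A$ always contains \emph{at least one} graph whose $I$-core fails to contain the $I$-core of $G$, so $A$ can never empty and the algorithm cannot return \textsc{success}. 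If you rework your argument, you must switch to this ``one witness branch'' formulation; the universal shadowing statement you set out to prove is unprovable.
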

\begin{proof}
We prove the contrapositive: if the $I$-core of $G$ is not an induced subgraph of the $I$-core of $H$, then not $H\vdash_\Ac G$ for any $\Ac$. In particular, we will prove inductively that at each iteration of the \textbf{while} loop, $A$ contains at least one graph whose $I$-core does not contain the $I$-core of $G$. The base case (in either possibility for $I$) is just the hypothesis of the contrapositive. Also, in either possibility for $I$, if the $I$-core of the chosen $H'\in A$ contains the $I$-core of $G$, then by inductive hypothesis there is another $H''\in A$ whose $I$-core does not contain the $I$-core of $G$, and this $H''$ will still be present in $A$ in the next iteration (unless the algorithm returned \textsc{failure}). So we may assume the chosen $H'\in A$ has an $I$-core that does not contain the $I$-core of $G$.
\begin{enumerate}
    \item If $H'$ has a dominating edge, say $\Ac$ chooses edge $uv$, then one possible neighbor set of the newly added vertex $w$ is $V(H')\setminus\{u,v\}$. Let $H''$ be the graph formed by adding $w$ to $H'$. In this case, $w$ cannot possibly be in an induced $\overline{C_4}$ in $H''$. If it was, say $\{a,b,c,w\}$ induces $\overline{C_4}$ with $a$ adjacent to $b$ and $c$ adjacent to $w$. Since $w$ is not adjacent to $a$ or $b$, we must have $\{a,b\}=\{u,v\}$, but then $uv$ was not dominating since $c$ is also nonadjacent to both $a$ and $b$. Thus the $\overline{C_4}$-core of $H''$ is the same as $H'$. Therefore $G$ is not an induced subgraph of $H'$. Also $H''$ is $\overline{K_3}$-free if $H'$ is, since the only nonneighbors of $w$ are $u$ and $v$, which are adjacent. Therefore either $H''$ is added to $A$ or else there is another $H'''\in A$ that is an induced subgraph of $H''$. In either case, there is a graph whose $\overline{C_4}$-core does not contain the $\overline{C_4}$-core of $G$.
    
    If $H'$ has no dominating edge, then if any of the calls to Algorithm~\ref{alg:fourcliqueadvanced} succeed, one can verify the new graph to be added to $A$ has the same $\overline{C_4}$-core as $H'$, so the inductive step follows in this case as well by similar logic to above.
    
    If none of the calls to Algorithm~\ref{alg:fourcliqueadvanced} succeed, but the call to Algorithm~\ref{alg:fourclique} succeeds, then one of the graphs to be added to $A$ is $H\vee K_1$. By similar logic to above, the inductive step holds here.
    
    Finally, if even the call to Algorithm~\ref{alg:fourclique} fails, but the final check that $H'$ is not a counterexample to HC-$\{\overline{K_3}\}$ succeeds, then one of the graphs to be added to $A$ is $H\vee K_1$. By the same logic, the inductive step holds.
    
    If all checks fail then the algorithm returns \text{failure} and it is not the case that $H\vdash_\Ac G$.
    \item If $H'$ has a dominating edge, say $\Ac$ chooses edge $uv$, then one possible neighbor set of the newly added vertex $w$ is $V(H')\setminus\{u,v\}$. Noting that $w$ has codegree 2 in $H''$, we see that the $I$-core of $H''$ is the same as the $I$-core of $H'$. Then the inductive step follows from the same logic as in case (1).
    
    If $H'$ has no dominating edge, then if any of the calls to Algorithm~\ref{alg:fourcliqueadvanced} succeed, one can verify the new graph to be added to $A$ has the same $\overline{C_4}$-core as $H'$, so the inductive step follows in this case as well by similar logic to above. Some care must be taken in the $k=3$ case, since $F_3$ has a vertex $w$ of codegree 3. However, none of the nonneighbors of this vertex have codegree 3 or greater, so it is not part of any induced subgraph of $H'\vee F_3$ that has minimum codegree 3 (either an induced subgraph of $H'\vee F_3$ contains a nonneighbor of $w$ with codegree less than 3 or $H'\vee F_3$ does not contain all nonneighbors of $w$ and $w$ has codegree less than 3 in the induced subgraph).
    
    The last two cases, if the call to Algorithm~\ref{alg:fourclique} succeeds or if it fails, are identical to case (1).
\end{enumerate}
\end{proof}

This means that it is impossible for the techniques in this paper to improve Theorem~\ref{thm:past} to all graphs on six vertices, since the graph $\overline{K_{3,3}}$ (among others) is equal to its $\overline{C_4}$-core. It also helps explain the general patterns in the graphs $H_i'$, such as the relative lack of induced $\overline{C_4}$'s (compared to induced $\overline{C_5}$'s, say) and abundance of low-codegree vertices (which appear as leaves and isolated vertices in the drawings of the complements in Figure~\ref{fig:main}).

\section{Results}
\label{sec:data}

The graphs used in intermediate steps in proving Theorem~\ref{thm:main} and in the example proofs are shown in Figure~\ref{fig:intermediate}.

\subsection{Some Simple Cases}

First we present a proof of HC-$\{\overline{K_3}, I_{21}\}$ that uses only Algorithm~\ref{alg:dominating}, i.e. only using the fact that minimal counterexamples to HC-$\{\overline{K_3}\}$ have no dominating edges. This will involve several steps starting from the graph $W_5$.

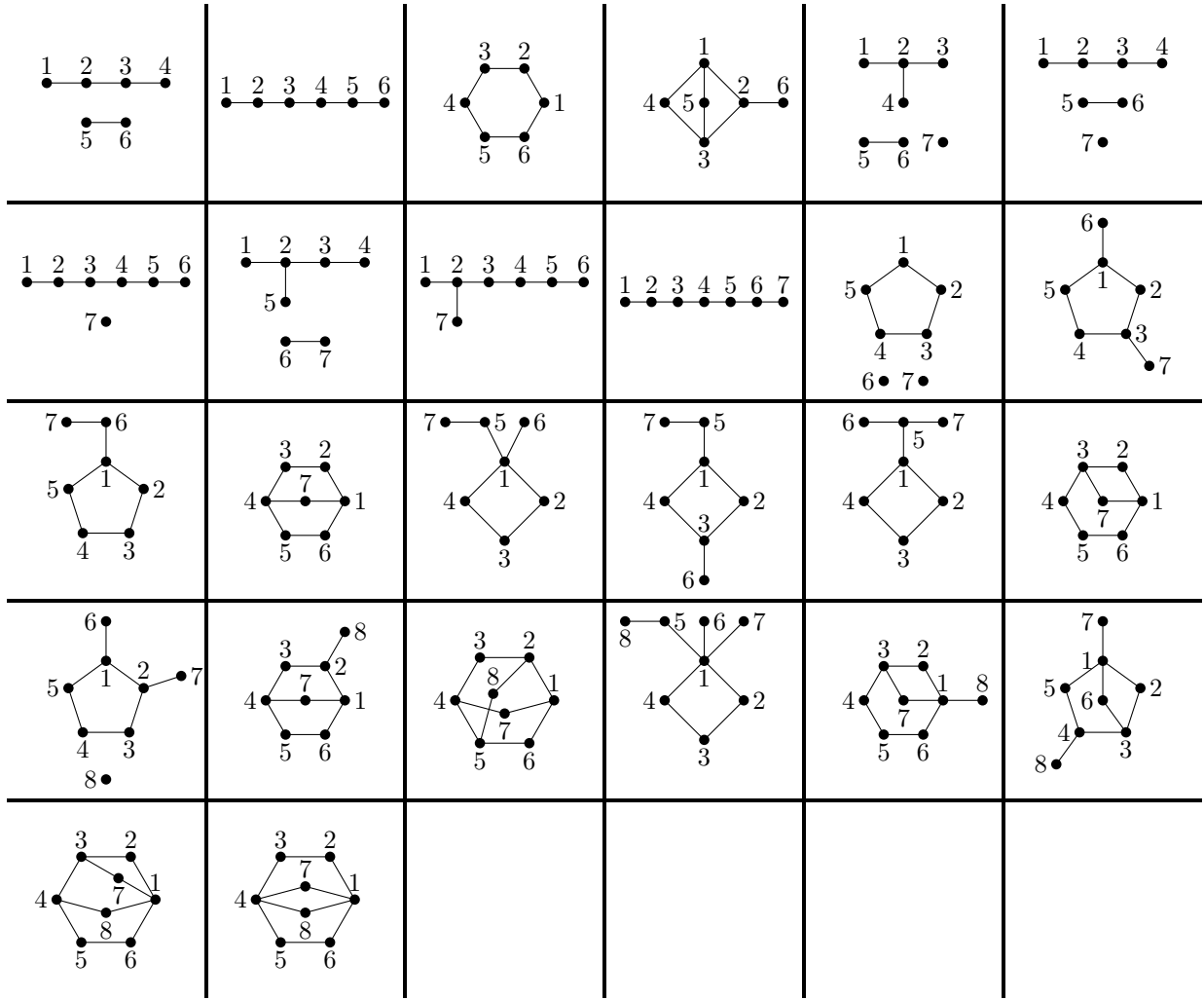
\begin{figure}[htbp!]
    \centering
    \begin{TAB}(e)[0.75em]{c|c|c|c|c|c}{c|c|c|c|c}
    \begin{tikzpicture}[scale=0.54]
    \bound;
    \labvert{1}{-1.5,0.5}{above};
    \labvert{2}{-0.5,0.5}{above};
    \labvert{3}{0.5,0.5}{above};
    \labvert{4}{1.5,0.5}{above};
    \labvert{5}{-0.5,-0.5}{below};
    \labvert{6}{0.5,-0.5}{below};
    \edge{1}{2};
    \edge{2}{3};
    \edge{3}{4};
    \edge{5}{6};
    \end{tikzpicture} &
    \begin{tikzpicture}[scale=0.54]
    \bound;
    \labvert{1}{-2,0}{above};
    \labvert{2}{-1.2,0}{above};
    \labvert{3}{-0.4,0}{above};
    \labvert{4}{0.4,0}{above};
    \labvert{5}{1.2,0}{above};
    \labvert{6}{2,0}{above};
    \edge{1}{2};
    \edge{2}{3};
    \edge{3}{4};
    \edge{4}{5};
    \edge{5}{6};
    \end{tikzpicture} &
    \begin{tikzpicture}[scale=0.54]
    \bound;
    \labvert{1}{1,0}{right};
    \labvert{2}{0.5,0.866}{above};
    \labvert{3}{-0.5,0.866}{above};
    \labvert{4}{-1,0}{left};
    \labvert{5}{-0.5,-0.866}{below};
    \labvert{6}{0.5,-0.866}{below};
    \edge{1}{2};
    \edge{2}{3};
    \edge{3}{4};
    \edge{4}{5};
    \edge{5}{6};
    \edge{6}{1};
    \end{tikzpicture} &
    \begin{tikzpicture}[scale=0.54]
    \bound;
    \labvert{1}{0,1}{above};
    \labvert{2}{1,0}{above};
    \labvert{3}{0,-1}{below};
    \labvert{4}{-1,0}{left};
    \labvert{5}{0,0}{left};
    \labvert{6}{2,0}{above};
    \edge{1}{2};
    \edge{2}{3};
    \edge{3}{4};
    \edge{4}{1};
    \edge{1}{5};
    \edge{3}{5};
    \edge{2}{6};
    \end{tikzpicture} &
    \begin{tikzpicture}[scale=0.54]
    \bound;
    \labvert{1}{-1,1}{above};
    \labvert{2}{0,1}{above};
    \labvert{3}{1,1}{above};
    \labvert{4}{0,0}{left};
    \labvert{5}{-1,-1}{below};
    \labvert{6}{0,-1}{below};
    \labvert{7}{1,-1}{left};
    \edge{1}{2};
    \edge{2}{3};
    \edge{2}{4};
    \edge{5}{6};
    \end{tikzpicture} &
    \begin{tikzpicture}[scale=0.54]
    \bound;
    \labvert{1}{-1.5,1}{above};
    \labvert{2}{-0.5,1}{above};
    \labvert{3}{0.5,1}{above};
    \labvert{4}{1.5,1}{above};
    \labvert{5}{-0.5,0}{left};
    \labvert{6}{0.5,0}{right};
    \labvert{7}{0,-1}{left};
    \edge{1}{2};
    \edge{2}{3};
    \edge{3}{4};
    \edge{5}{6};
    \end{tikzpicture} \\
    \begin{tikzpicture}[scale=0.54]
    \bound;
    \labvert{1}{-2,0.5}{above};
    \labvert{2}{-1.2,0.5}{above};
    \labvert{3}{-0.4,0.5}{above};
    \labvert{4}{0.4,0.5}{above};
    \labvert{5}{1.2,0.5}{above};
    \labvert{6}{2,0.5}{above};
    \labvert{7}{0,-0.5}{left};
    \edge{1}{2};
    \edge{2}{3};
    \edge{3}{4};
    \edge{4}{5};
    \edge{5}{6};
    \end{tikzpicture} &
    \begin{tikzpicture}[scale=0.54]
    \bound;
    \labvert{1}{-1.5,1}{above};
    \labvert{2}{-0.5,1}{above};
    \labvert{3}{0.5,1}{above};
    \labvert{4}{1.5,1}{above};
    \labvert{5}{-0.5,0}{left};
    \labvert{6}{-0.5,-1}{below};
    \labvert{7}{0.5,-1}{below};
    \edge{1}{2};
    \edge{2}{3};
    \edge{3}{4};
    \edge{2}{5};
    \edge{6}{7};
    \end{tikzpicture} &
    \begin{tikzpicture}[scale=0.54]
    \bound;
    \labvert{1}{-2,0.5}{above};
    \labvert{2}{-1.2,0.5}{above};
    \labvert{3}{-0.4,0.5}{above};
    \labvert{4}{0.4,0.5}{above};
    \labvert{5}{1.2,0.5}{above};
    \labvert{6}{2,0.5}{above};
    \labvert{7}{-1.2,-0.5}{left};
    \edge{1}{2};
    \edge{2}{3};
    \edge{3}{4};
    \edge{4}{5};
    \edge{5}{6};
    \edge{2}{7};
    \end{tikzpicture} &
    \begin{tikzpicture}[scale=0.54]
    \bound;
    \labvert{1}{-2,0}{above};
    \labvert{2}{-1.333,0}{above};
    \labvert{3}{-0.667,0}{above};
    \labvert{4}{0,0}{above};
    \labvert{5}{0.667,0}{above};
    \labvert{6}{1.333,0}{above};
    \labvert{7}{2,0}{above};
    \edge{1}{2};
    \edge{2}{3};
    \edge{3}{4};
    \edge{4}{5};
    \edge{5}{6};
    \edge{6}{7};
    \end{tikzpicture} &
    \begin{tikzpicture}[scale=0.54]
    \bound;
    \labvert{1}{0,1}{above};
    \labvert{2}{0.951,0.309}{right};
    \labvert{3}{0.588,-0.809}{below};
    \labvert{4}{-0.588,-0.809}{below};
    \labvert{5}{-0.951,0.309}{left};
    \labvert{6}{-0.5,-2}{left};
    \labvert{7}{0.5,-2}{left};
    \edge{1}{2};
    \edge{2}{3};
    \edge{3}{4};
    \edge{4}{5};
    \edge{5}{1};
    \end{tikzpicture} &
    \begin{tikzpicture}[scale=0.54]
    \bound;
    \labvert{1}{0,1}{below};
    \labvert{2}{0.951,0.309}{right};
    \labvert{3}{0.588,-0.809}{right};
    \labvert{4}{-0.588,-0.809}{below};
    \labvert{5}{-0.951,0.309}{left};
    \labvert{6}{0,2}{left};
    \labvert{7}{1.176,-1.618}{right};
    \edge{1}{2};
    \edge{2}{3};
    \edge{3}{4};
    \edge{4}{5};
    \edge{5}{1};
    \edge{1}{6};
    \edge{3}{7};
    \end{tikzpicture} \\
    \begin{tikzpicture}[scale=0.54]
    \bound;
    \labvert{1}{0,1}{below};
    \labvert{2}{0.951,0.309}{right};
    \labvert{3}{0.588,-0.809}{below};
    \labvert{4}{-0.588,-0.809}{below};
    \labvert{5}{-0.951,0.309}{left};
    \labvert{6}{0,2}{right};
    \labvert{7}{-1,2}{left};
    \edge{1}{2};
    \edge{2}{3};
    \edge{3}{4};
    \edge{4}{5};
    \edge{5}{1};
    \edge{1}{6};
    \edge{6}{7};
    \end{tikzpicture} &
    \begin{tikzpicture}[scale=0.54]
    \bound;
    \labvert{1}{1,0}{right};
    \labvert{2}{0.5,0.866}{above};
    \labvert{3}{-0.5,0.866}{above};
    \labvert{4}{-1,0}{left};
    \labvert{5}{-0.5,-0.866}{below};
    \labvert{6}{0.5,-0.866}{below};
    \labvert{7}{0,0}{above};
    \edge{1}{2};
    \edge{2}{3};
    \edge{3}{4};
    \edge{4}{5};
    \edge{5}{6};
    \edge{6}{1};
    \edge{1}{7};
    \edge{4}{7};
    \end{tikzpicture} &
    \begin{tikzpicture}[scale=0.54]
    \bound;
    \labvert{1}{0,1}{below};
    \labvert{2}{1,0}{right};
    \labvert{3}{0,-1}{below};
    \labvert{4}{-1,0}{left};
    \labvert{5}{-0.5,2}{right};
    \labvert{6}{0.5,2}{right};
    \labvert{7}{-1.5,2}{left};
    \edge{1}{2};
    \edge{2}{3};
    \edge{3}{4};
    \edge{4}{1};
    \edge{1}{5};
    \edge{1}{6};
    \edge{5}{7};
    \end{tikzpicture} &
    \begin{tikzpicture}[scale=0.54]
    \bound;
    \labvert{1}{0,1}{below};
    \labvert{2}{1,0}{right};
    \labvert{3}{0,-1}{above};
    \labvert{4}{-1,0}{left};
    \labvert{5}{0,2}{right};
    \labvert{6}{0,-2}{left};
    \labvert{7}{-1,2}{left};
    \edge{1}{2};
    \edge{2}{3};
    \edge{3}{4};
    \edge{4}{1};
    \edge{1}{5};
    \edge{3}{6};
    \edge{5}{7};
    \end{tikzpicture} &
    \begin{tikzpicture}[scale=0.54]
    \bound;
    \labvert{1}{0,1}{below};
    \labvert{2}{1,0}{right};
    \labvert{3}{0,-1}{below};
    \labvert{4}{-1,0}{left};
    \labvert{5}{0,2}{below right};
    \labvert{6}{-1,2}{left};
    \labvert{7}{1,2}{right};
    \edge{1}{2};
    \edge{2}{3};
    \edge{3}{4};
    \edge{4}{1};
    \edge{1}{5};
    \edge{5}{6};
    \edge{5}{7};
    \end{tikzpicture} &
    \begin{tikzpicture}[scale=0.54]
    \bound;
    \labvert{1}{1,0}{right};
    \labvert{2}{0.5,0.866}{above};
    \labvert{3}{-0.5,0.866}{above};
    \labvert{4}{-1,0}{left};
    \labvert{5}{-0.5,-0.866}{below};
    \labvert{6}{0.5,-0.866}{below};
    \labvert{7}{0,0}{below};
    \edge{1}{2};
    \edge{2}{3};
    \edge{3}{4};
    \edge{4}{5};
    \edge{5}{6};
    \edge{6}{1};
    \edge{1}{7};
    \edge{3}{7};
    \end{tikzpicture} \\
    \begin{tikzpicture}[scale=0.54]
    \bound;
    \labvert{1}{0,1}{below};
    \labvert{2}{0.951,0.309}{above};
    \labvert{3}{0.588,-0.809}{below};
    \labvert{4}{-0.588,-0.809}{below};
    \labvert{5}{-0.951,0.309}{left};
    \labvert{6}{0,2}{left};
    \labvert{7}{1.902,0.618}{right};
    \labvert{8}{0,-2}{left};
    \edge{1}{2};
    \edge{2}{3};
    \edge{3}{4};
    \edge{4}{5};
    \edge{5}{1};
    \edge{1}{6};
    \edge{2}{7};
    \end{tikzpicture} &
    \begin{tikzpicture}[scale=0.54]
    \bound;
    \labvert{1}{1,0}{right};
    \labvert{2}{0.5,0.866}{right};
    \labvert{3}{-0.5,0.866}{above};
    \labvert{4}{-1,0}{left};
    \labvert{5}{-0.5,-0.866}{below};
    \labvert{6}{0.5,-0.866}{below};
    \labvert{7}{0,0}{above};
    \labvert{8}{1,1.732}{right};
    \edge{1}{2};
    \edge{2}{3};
    \edge{3}{4};
    \edge{4}{5};
    \edge{5}{6};
    \edge{6}{1};
    \edge{1}{7};
    \edge{4}{7};
    \edge{2}{8};
    \end{tikzpicture} &
    \begin{tikzpicture}[scale=0.54]
    \bound;
    \labvert{1}{1.25,0}{above};
    \labvert{2}{0.625,1.083}{above};
    \labvert{3}{-0.625,1.083}{above};
    \labvert{4}{-1.25,0}{left};
    \labvert{5}{-0.625,-1.083}{below};
    \labvert{6}{0.625,-1.083}{below};
    \labvert{7}{0,-0.333}{below};
    \labvert{8}{-0.289,0.167}{above};
    \edge{1}{2};
    \edge{2}{3};
    \edge{3}{4};
    \edge{4}{5};
    \edge{5}{6};
    \edge{6}{1};
    \edge{1}{7};
    \edge{4}{7};
    \edge{2}{8};
    \edge{5}{8};
    \end{tikzpicture} &
    \begin{tikzpicture}[scale=0.54]
    \bound;
    \labvert{1}{0,1}{below};
    \labvert{2}{1,0}{right};
    \labvert{3}{0,-1}{below};
    \labvert{4}{-1,0}{left};
    \labvert{5}{-1,2}{right};
    \labvert{6}{0,2}{right};
    \labvert{7}{1,2}{right};
    \labvert{8}{-2,2}{below};
    \edge{1}{2};
    \edge{2}{3};
    \edge{3}{4};
    \edge{4}{1};
    \edge{1}{5};
    \edge{1}{6};
    \edge{1}{7};
    \edge{5}{8};
    \end{tikzpicture} &
    \begin{tikzpicture}[scale=0.54]
    \bound;
    \labvert{1}{1,0}{above};
    \labvert{2}{0.5,0.866}{above};
    \labvert{3}{-0.5,0.866}{above};
    \labvert{4}{-1,0}{left};
    \labvert{5}{-0.5,-0.866}{below};
    \labvert{6}{0.5,-0.866}{below};
    \labvert{7}{0,0}{below};
    \labvert{8}{2,0}{above};
    \edge{1}{2};
    \edge{2}{3};
    \edge{3}{4};
    \edge{4}{5};
    \edge{5}{6};
    \edge{6}{1};
    \edge{1}{7};
    \edge{3}{7};
    \edge{1}{8};
    \end{tikzpicture} &
    \begin{tikzpicture}[scale=0.54]
    \bound;
    \labvert{1}{0,1}{left};
    \labvert{2}{0.951,0.309}{right};
    \labvert{3}{0.588,-0.809}{below};
    \labvert{4}{-0.588,-0.809}{left};
    \labvert{5}{-0.951,0.309}{left};
    \labvert{6}{0,0}{left};
    \labvert{7}{0,2}{left};
    \labvert{8}{-1.176,-1.618}{left};
    \edge{1}{2};
    \edge{2}{3};
    \edge{3}{4};
    \edge{4}{5};
    \edge{5}{1};
    \edge{1}{6};
    \edge{3}{6};
    \edge{1}{7};
    \edge{4}{8};
    \end{tikzpicture} \\
    \begin{tikzpicture}[scale=0.54]
    \bound;
    \labvert{1}{1.25,0}{above};
    \labvert{2}{0.625,1.083}{above};
    \labvert{3}{-0.625,1.083}{above};
    \labvert{4}{-1.25,0}{left};
    \labvert{5}{-0.625,-1.083}{below};
    \labvert{6}{0.625,-1.083}{below};
    \labvert{8}{0,-0.333}{below};
    \labvert{7}{0.313,0.541}{below};
    \edge{1}{2};
    \edge{2}{3};
    \edge{3}{4};
    \edge{4}{5};
    \edge{5}{6};
    \edge{6}{1};
    \edge{1}{8};
    \edge{4}{8};
    \edge{1}{7};
    \edge{3}{7};
    \end{tikzpicture} &
    \begin{tikzpicture}[scale=0.54]
    \bound;
    \labvert{1}{1.25,0}{above};
    \labvert{2}{0.625,1.083}{above};
    \labvert{3}{-0.625,1.083}{above};
    \labvert{4}{-1.25,0}{left};
    \labvert{5}{-0.625,-1.083}{below};
    \labvert{6}{0.625,-1.083}{below};
    \labvert{8}{0,-0.333}{below};
    \labvert{7}{0,0.333}{above};
    \edge{1}{2};
    \edge{2}{3};
    \edge{3}{4};
    \edge{4}{5};
    \edge{5}{6};
    \edge{6}{1};
    \edge{1}{8};
    \edge{4}{8};
    \edge{1}{7};
    \edge{4}{7};
    \end{tikzpicture} & & & &
    \end{TAB}
    \caption{The complements of the graphs $I_1, \dots, I_{26}$ when read left-to-right, top-to-bottom. These graphs are used in the proof of Theorem~\ref{thm:main}. As in Figure~\ref{fig:main}, the labelings are used to choose edges consistently in algorithms.}
    \label{fig:intermediate}
\end{figure}

\begin{lemma}
HC-$\{\overline{K_3}, I_3\}$ holds.
\end{lemma}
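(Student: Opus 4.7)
The plan is to run Algorithm~\ref{alg:dominating} with input graph $H = W_5$ and monotone property $P$ = ``contains an induced $I_3$''. By Theorem~\ref{thm:past} (Bosse's result), HC-$\{\overline{K_3}, W_5\}$ holds, so every minimal counterexample to HC-$\{\overline{K_3}\}$ contains an induced $W_5$; thus proving that the algorithm returns \textsc{success} on this input suffices. Note that $I_3$ is the complement of $C_6$, i.e.\ the triangular prism.

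First, I would handle the initial iteration. Label $W_5$ with hub $h$ adjacent to cycle vertices $c_1,c_2,c_3,c_4,c_5$. Because $h$ dominates the graph, every edge $hc_i$ is dominating, and by the dihedral symmetry of $W_5$ I may take $hc_1$ as the chosen edge. Adding a new vertex $w$ non-adjacent to both $h$ and $c_1$, the $\overline{K_3}$-free constraint forces $w$ to be adjacent to $c_3$ and $c_4$ (otherwise $\{w,c_1,c_3\}$ or $\{w,c_1,c_4\}$ is an independent triple) and to at least one of $c_2, c_5$ (otherwise $\{w,c_2,c_5\}$ is independent). This gives at most three isomorphism classes of 7-vertex attachments. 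A direct check shows none of them contains $I_3$: the complement of each such graph consists of an isolated vertex (coming from $h$) together with a 5-cycle (the complement of $\overline{C_5}$) and one or two pendant edges incident to $w$, and such a graph has no induced 6-cycle.

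Next, I would iterate. Each of these 7-vertex graphs still has dominating edges (namely every $hc_i$ with $c_i$ a neighbor of $w$, as now every vertex is adjacent to $h$ or to itself via such a $c_i$). Picking a dominating edge in each branch and again attaching a new vertex non-adjacent to its endpoints, I would enumerate the $\overline{K_3}$-free extensions and at each step either (a) discard the graph because it already contains $I_3$, (b) recognize it is a proper induced supergraph of a graph already in $A$ (and hence does not need to be re-added), or (c) add it to $A$ for further processing. The symmetries inherited from $W_5$ collapse many apparently distinct cases, and following the ordering conventions of Theorem~\ref{thm:nicedominating} makes the bookkeeping canonical.

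The key claim — where all the real work lies — is that after a bounded number of such steps every branch terminates: either it hits a $\overline{K_3}$, or it completes an induced $I_3$, or it produces a graph already containing one previously disposed of. The potential obstacle is that the algorithm could in principle reach a graph with no dominating edge and no induced $I_3$, in which case it would return \textsc{failure}. To rule this out I would argue structurally: every $\overline{K_3}$-free extension of $W_5$ in which no $I_3$ appears keeps $h$ adjacent to all original cycle vertices, and every added vertex must be adjacent to a sufficiently large symmetric subset of the cycle to avoid new independent triples, which in turn keeps creating new dominating edges or directly forces a $\overline{C_6}$ in the complement. Verifying this for the handful of branches arising is the main, but routine, case analysis, and the expected depth of the search tree is only two or three.
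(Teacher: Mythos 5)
Your setup is exactly the paper's: start from $W_5$ (known to be an induced subgraph of any minimal counterexample by Theorem~\ref{thm:past}), use Lemma~\ref{lem:dominating} on the spoke edge $hc_1$, and correctly deduce that the new vertex $w$ must be adjacent to $c_3$, $c_4$, and at least one of $c_2,c_5$; your check that none of the resulting 7-vertex graphs contains $I_3=\overline{C_6}$ is also right. The problem is that everything after that first iteration is deferred. The entire content of the lemma is the claim that the subsequent branches all terminate in an induced $I_3$, and you do not carry out that case analysis: you assert that added vertices ``must be adjacent to a sufficiently large symmetric subset of the cycle\dots which in turn keeps creating new dominating edges or directly forces a $\overline{C_6}$ in the complement,'' but this is not an argument, it is a restatement of what needs to be proved. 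In particular you give no reason why the process cannot reach a $\overline{K_3}$-free, $I_3$-free graph with no dominating edge (the \textsc{failure} branch of Algorithm~\ref{alg:dominating}), which you yourself identify as the potential obstacle.

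For comparison, the paper's proof does the concrete work in two further rounds. With $w=7$ adjacent to $c_2$, it splits on whether $7$ is adjacent to $c_5$; in each case it applies Lemma~\ref{lem:dominating} to the edge $(c_2,h)$ to obtain a vertex $8$ forced adjacent to $c_4,c_5,7$, and checks: if $8\sim c_1$ then $\{c_1,c_4,c_2,8,h,7\}$ induces $I_3$; otherwise either one deletes $7$ to reduce Case~1 to Case~2, or (in Case~2) one applies the lemma once more to $(c_3,h)$ to get a vertex $9$, and every remaining subcase exhibits an explicit induced $I_3$. That reduction-by-deletion trick and the explicit six-vertex witnesses are precisely the ``routine case analysis'' you postponed; without it the proof is incomplete. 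Note also that your estimate of ``two or three'' levels is right (the paper needs vertices up to $9$), but depth alone does not certify success --- each leaf must be checked to actually contain $I_3$.
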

\begin{proof}
By Theorem~\ref{thm:past}, any minimal counterexample $G$ to HC-$\{\overline{K_3}\}$ has an induced $W_5$, say induced by vertices 1 through 6 with 1-2-3-4-5-1 a 5-cycle and 6 adjacent to the other five vertices. For edge $(1,6)$ to not be dominating, there must be a vertex 7 in $G$ adjacent to neither 1 nor 6. Then 7 is adjacent to 3 and 4 (nonneighbors of 1), and at least one of 2 and 5 since $G$ has no stable set of size 3. WLOG, 7 is adjacent to 2 and either (case 1) adjacent or (case 2) nonadjacent to 5.

(Case 1) For edge $(2,6)$ to not be dominating, $G$ must have a vertex 8 adjacent to neither 2 nor 6. Vertex 8 must be adjacent to 4, 5, and 7. If it is adjacent to 1, then $\{1,4,2,8,6,7\}$ induces $I_3$. If 8 is not adjacent to 1, then delete vertex 7 and witness that the resulting graph is isomorphic to the graph to be considered in case 2, so we reduce to that case.

(Case 2) For edge $(2,6)$ to not be dominating, $G$ must have a vertex 8 adjacent to neither 2 nor 6. Vertex 8 must be adjacent to 4, 5, and 7. If it is adjacent to 1, then $\{1,4,2,8,6,7\}$ induces $I_3$. If not, then it is adjacent to 3. For edge $(3,6)$ to not be dominating, $G$ must have a vertex 9 adjacent to neither 3 nor 6. Vertex 9 must be adjacent to 1, 5, 7, and 8. If 9 is adjacent to 2 then $\{2,5,3,9,6,8\}$ induces $I_3$. If it is not, then it is adjacent to 4 and $\{2,4,1,7,6,9\}$ induces $I_3$.

Therefore any minimal counterexample (and therefore every counterexample) to HC-$\{\overline{K_3}\}$ has an induced $I_3$.
\end{proof}

\begin{lemma}
HC-$\{\overline{K_3}, I_{14}\}$ holds.
\end{lemma}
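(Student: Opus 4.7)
The plan is to bootstrap from the previous lemma via a single iteration of Algorithm~\ref{alg:dominating}. It suffices to show every minimal counterexample to HC-$\{\overline{K_3}\}$ contains an induced $I_{14}$, since any counterexample contains a minimal one. By the previous lemma such a minimal counterexample $G$ has an induced $I_3$; label its vertices $1,\dots,6$ as in Figure~\ref{fig:intermediate}, so that $I_3=\overline{C_6}$ with non-edges forming the $6$-cycle $1\text{-}2\text{-}3\text{-}4\text{-}5\text{-}6\text{-}1$.

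First I would identify the dominating edges of $I_3$. An edge $uv$ of $I_3$ dominates $I_3$ iff $\{u-1,u+1\}\cap\{v-1,v+1\}=\varnothing$ in the complementary $C_6$, which forces $u$ and $v$ to be antipodal; so the dominating edges are exactly $1\text{-}4$, $2\text{-}5$, and $3\text{-}6$, and by symmetry I may focus on $1\text{-}4$. Since a minimal counterexample has no dominating edges (Lemma~\ref{lem:dominating}), there exists a vertex $7\in V(G)$ non-adjacent to both $1$ and $4$.

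Next I would pin down the remaining adjacencies of $7$ using $\overline{K_3}$-freeness: for each non-neighbor $x$ of $1$ in $\{2,3,5,6\}$, the triple $\{1,7,x\}$ would be stable unless $7x$ is an edge, and similarly for non-neighbors of $4$. The non-neighbors of $1$ in $\{2,\dots,6\}$ are $\{2,6\}$ and of $4$ are $\{3,5\}$, so $7$ is forced to be adjacent to each of $2,3,5,6$ and non-adjacent to $1,4$. This completely determines the induced subgraph on $\{1,\dots,7\}$.

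The final step is matching this $7$-vertex graph to $I_{14}$: in the complement, $\{1,\dots,6\}$ induces $C_6$ and $7$ is adjacent only to the antipodal pair $\{1,4\}$, which is exactly the drawing of $I_{14}$ in Figure~\ref{fig:intermediate}. I anticipate no real obstacle; the only care needed is the enumeration of dominating edges of $I_3$ and the verification that the forced adjacencies of $7$ reproduce the complement of $I_{14}$ on the nose rather than some other extension.
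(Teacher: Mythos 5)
Your proof is correct and is essentially identical to the paper's: both take the induced $I_3=\overline{C_6}$ from the previous lemma, use Lemma~\ref{lem:dominating} on the dominating edge $14$ to obtain a vertex $7$ nonadjacent to $1$ and $4$, force $7$ to be adjacent to $2,3,5,6$ by $\overline{K_3}$-freeness, and recognize the result as $I_{14}$. The enumeration of all dominating edges of $I_3$ is harmless but unnecessary, since one dominating edge suffices.
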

\begin{proof}
By the previous lemma, any minimal counterexample $G$ HC-$\{\overline{K_3}\}$ has an induced $I_3$, say induced by vertices 1 through 6 with 1-2-3-4-5-6-1 a 6-cycle in the complement graph. For edge $(1,4)$ to not be dominating, $G$ must have a vertex 7 adjacent to neither 1 nor 4. But then since $G$ has no stable set of size 3, 7 must be adjacent to all four other vertices, and $\{1,2,3,4,5,6,7\}$ induces $I_{14}$.
\end{proof}

\begin{lemma}
HC-$\{\overline{K_3}, I_{21}\}$ holds.
\end{lemma}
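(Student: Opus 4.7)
The plan is to continue the pattern used in the preceding two lemmas: starting from $I_{14}$, which by the previous lemma is an induced subgraph of every counterexample to HC-$\{\overline{K_3}\}$, iteratively apply Lemma~\ref{lem:dominating} and enumerate the ways each new vertex can attach. In effect I am running Algorithm~\ref{alg:dominating} by hand with $P$ being the (monotone) property of containing an induced $I_{21}$, so it suffices to exhaust every branch.

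Since $\overline{I_{14}}$ is $C_6$ on $\{1,\dots,6\}$ together with a vertex $7$ joined to the antipodal pair $\{1,4\}$, the edge $\{2,5\}$ of $I_{14}$ is dominating by a direct check, whereas $\{1,4\}$ is not (vertex $7$ is missed). Applying Lemma~\ref{lem:dominating} produces a new vertex $8$ nonadjacent to $2$ and $5$, and the $\overline{K_3}$-free condition forces the nonneighbors of $8$ in $V(I_{14})$ to form a clique containing $\{2,5\}$; hence every additional nonneighbor must be adjacent in $I_{14}$ to both $2$ and $5$, and the only such vertex is $7$. Thus there are exactly two cases: $N(8)=\{1,3,4,6,7\}$ or $N(8)=\{1,3,4,6\}$. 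In the first, a direct complement computation shows the $8$-vertex graph is exactly $I_{21}$, since $\overline{I_{21}}$ is $C_6$ with extras on the antipodal pairs $\{1,4\}$ and $\{2,5\}$.

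In the remaining case, call the $8$-vertex graph $G_B$. The edge $\{3,6\}$ turns out to be dominating in $G_B$. Applying Lemma~\ref{lem:dominating} again, the new vertex $9$ is nonadjacent to $3$ and $6$, and its further nonneighbors must be common neighbors of $3$ and $6$ in $G_B$, which are $\{7,8\}$; however $7$ and $8$ are nonadjacent in $G_B$ (since in the case under consideration we took $N(8)=\{1,3,4,6\}$, excluding $7$), so at most one of them can lie in the nonneighborhood. This gives three subcases for the nonneighborhood of $9$: $\{3,6\}$, $\{3,6,7\}$, or $\{3,6,8\}$.

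Finally, in each subcase, deleting a single well-chosen vertex ($7$ in the first two, $8$ in the third) yields an $8$-vertex graph whose complement is $C_6$ plus two extra vertices each attached to a distinct antipodal pair of the hexagon. By the dihedral symmetry of $C_6$, every such graph is isomorphic to $\overline{I_{21}}$, so an induced $I_{21}$ is produced in all three subcases. The main obstacle, such as it is, lies in the second iteration: keeping track of the constraints on $G_B$ so as to verify both that $\{3,6\}$ is dominating and that $7,8$ are nonadjacent, which together limit the enumeration to just three subcases each of which terminates with $I_{21}$.
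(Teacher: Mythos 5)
Your proof is correct and follows essentially the same route as the paper's: take the dominating edge $\{2,5\}$ of $I_{14}$ to force a vertex $8$, handle the case $8\sim 7$ directly, and otherwise use the dominating edge $\{3,6\}$ to force a vertex $9$ that completes an induced $I_{21}$ with whichever of $7,8$ it is adjacent to. The only cosmetic differences are your explicit three-subcase split for $9$ and the explicit appeal to the dihedral symmetry of $C_6$ to identify the resulting graphs with $I_{21}$.
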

\begin{proof}
Starting from the endpoint of the proof of the previous lemma, in order for $(2,5)$ to not be dominating, there must be a vertex 8 adjacent to neither 2 nor 5. Then 8 must be adjacent to all other vertices except possibly 7. If it is adjacent to 7 then $\{1,2,3,4,5,6,7,8\}$ induces $I_{21}$. If not, then for $(3,6)$ to not be dominating, there must be a vertex 9 adjacent to neither 3 nor 6; this vertex is adjacent to 1, 2, 3, 4, 5, 6, and at least one of 7 and 8. If 7 then $\{1,2,3,4,5,6,7,9\}$ induces $I_{21}$; if 8 then $\{1,2,3,4,5,6,8,9\}$ induces $I_{21}$ instead.
\end{proof}

This is probably the best result that can be obtained starting from Theorem~\ref{thm:past} using Algorithm~\ref{alg:dominating} alone without involving a tremendous amount of casework.

The graph $H_7$ in Figure~\ref{fig:h6h7} has no dominating edges, but we can using Algorithm~\ref{alg:fourcliqueadvanced} to introduce new vertices and dominating edges into this graph, and then use Algorithm~\ref{alg:dominating} afterwards. Here is an example, proving eventually HC-$\{\overline{K_3}, I_{12}\}$.
\begin{lemma}
HC-$\{\overline{K_3}, I_2\}$ holds.
\end{lemma}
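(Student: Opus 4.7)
The plan is to follow the template suggested by the preceding paragraph: combine a single coning step via Algorithm~\ref{alg:fourcliqueadvanced} with a subsequent run of Algorithm~\ref{alg:dominating}, starting from $H_7$ (which by Theorem~\ref{thm:past} is an induced subgraph of every counterexample to HC-$\{\overline{K_3}\}$). Since $H_7$ itself has no dominating edges, the first move must be to invoke Algorithm~\ref{alg:fourcliqueadvanced} on input $(P, H_7, k)$, where $P$ is the monotone property of containing an induced $I_2$, sweeping $k$ from $3$ down to $0$ as in Algorithm~\ref{alg:full}. For the first $k$ that succeeds, Lemma~\ref{lem:fourcliqueadvancedalg} lets us conclude that every counterexample with an induced $H_7$ but no induced $I_2$ contains an induced $H_7\vee F_k$; one would expect $k=0$ or $k=1$ to work (yielding one or two new vertices complete to $V(H_7)$), since larger $F_k$ contain more structure and are correspondingly harder to embed.

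With $H_7\vee F_k$ in hand, the graph now has dominating edges --- every edge from a coning vertex to a high-degree vertex of $H_7$ is dominating --- so Algorithm~\ref{alg:dominating} can take over. I would then run that algorithm in the same hand-traced style as the three preceding lemmas: at each step select a dominating edge $uv$ of the current graph $H'$, introduce a fresh vertex $w$ non-adjacent to both $u$ and $v$, and branch on the possible neighborhoods $N\subseteq V(H')\setminus\{u,v\}$, discarding those branches that create an $\overline{K_3}$ or that force an induced $I_2$ (terminating the branch as successful), and feeding any remaining branch back into the worklist. The process terminates in \textsc{success} when the worklist empties, which by Lemma~\ref{lem:dominatingalg} proves HC-$\{\overline{K_3}, I_2\}$.

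The main obstacle will be the case explosion after the coning step. Each new vertex $w$ has up to $2^{|V(H')|-2}$ candidate neighborhoods, and after a few iterations the branching becomes hard to manage by hand. Three features keep this within reach: (i) the $\overline{K_3}$-freeness constraint prunes most neighborhoods that intersect an independent pair poorly, (ii) the symmetries of $H_7$ and of the added $F_k$ collapse many branches via WLOG arguments, and (iii) Theorem~\ref{thm:corefailure} is not an obstruction because $I_2=\overline{P_6}$ has minimum codegree $2$, so the $\overline{C_4}$-core and minimum-codegree-$3$ constraints do not forbid reaching $I_2$. Following the canonical-dominating-edge choice of Theorem~\ref{thm:nicedominating} to keep the case analysis as rigid and reusable as possible, I would expect the resulting proof to consist of a coning paragraph followed by a handful of short case-by-case arguments in the style of the HC-$\{\overline{K_3}, I_3\}$, HC-$\{\overline{K_3}, I_{14}\}$, and HC-$\{\overline{K_3}, I_{21}\}$ proofs above, each terminating by exhibiting an induced $\overline{P_6}$ on six of the vertices accumulated so far.
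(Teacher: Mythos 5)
Your proposal follows essentially the same route as the paper: the paper's proof of this lemma is precisely an application of Algorithm~\ref{alg:fourcliqueadvanced} with $k=1$ (enumerating the $27$ admissible neighborhoods in $H_7$, recording which pairs of neighborhood-cliques must be complete, and exhibiting a four-clique cover with $\sum_i\abs{Q_i\cap V(H_7)}\ge\abs{H_7}+2$ to force an induced $H_7\vee\overline{K_2}$), followed by eliminating the single dominating edge $(1,8)$ to produce the induced $I_2$. Your plan matches this exactly, though it stops short of carrying out the neighborhood enumeration and clique-cover computation that constitute the bulk of the paper's argument.
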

\begin{proof}
By Theorem~\ref{thm:past}, any minimal counterexample $G$ to HC-$\{\overline{K_3}\}$ has an induced $H_7$, say induced by vertices 1 through 7 with 1-2-3-4-5-1 a 5-cycle, 6 adjacent to $\{1,2,3,7\}$, and 7 adjacent to $\{2,3,4,6\}$. Suppose for contradiction $G$ had no $I_2$.

Then the vertices $v$ of $G\setminus H_7$ have $N(v)\cap \{1,\dots, 7\}$ equal to one of the following sets:
\begin{center}
\begin{TAB}(c)[0.25em]{lll}{ccccccccc}
$N_{0}=\{3,4,5,7\}$, &
$N_{1}=\{3,4,5,6,7\}$, &
$N_{2}=\{2,3,4,6,7\}$, \\
$N_{3}=\{2,3,4,5,7\}$, &
$N_{4}=\{2,3,4,5,6,7\}$, &
$N_{5}=\{1,4,5,7\}$, \\
$N_{6}=\{1,4,5,6,7\}$, &
$N_{7}=\{1,4,5,6\}$, &
$N_{8}=\{1,4,5\}$, \\
$N_{9}=\{1,3,4,5,7\}$, &
$N_{10}=\{1,3,4,5,6,7\}$, &
$N_{11}=\{1,3,4,5,6\}$, \\
$N_{12}=\{1,3,4,5\}$, &
$N_{13}=\{1,2,5,6,7\}$, &
$N_{14}=\{1,2,5,6\}$, \\
$N_{15}=\{1,2,4,5,7\}$, &
$N_{16}=\{1,2,4,5,6,7\}$, &
$N_{17}=\{1,2,4,5,6\}$, \\
$N_{18}=\{1,2,4,5\}$, &
$N_{19}=\{1,2,3,6,7\}$, &
$N_{20}=\{1,2,3,5,6,7\}$, \\
$N_{21}=\{1,2,3,5,6\}$, &
$N_{22}=\{1,2,3,4,6,7\}$, &
$N_{23}=\{1,2,3,4,5,7\}$, \\
$N_{24}=\{1,2,3,4,5,6,7\}$, &
$N_{25}=\{1,2,3,4,5,6\}$, &
$N_{26}=\{1,2,3,4,5\}$.
\end{TAB}
\end{center}
(This is easily verified by brute-force; any other neighbor set leads either to a stable set of size 3 or an $I_2$.)

The set of vertices $S_i$ for which $N(v)\cap \{1,\dots, 7\}=N_i$ forms a clique in $G$ for all $i$ except possibly $i=24$ since $G$ has no stable set of size 3 and such vertices share a nonneighbor except in the case $i=24$. Suppose, for the moment, that $G$ had no induced $H_7\vee \overline{K_2}$. Then $S_{24}$ is also a clique in $G$.

For each pair $(i,j)$ with marked with an X in Table~\ref{tab:pairs}, $S_i$ and $S_j$ must be complete to each other in $G$, since if they weren't, $G$ would either have a stable set of size 3 or an $I_2$ (again, this can be verified by brute force by adding two vertices to $G$ with the appropriate neighbors and making those vertices nonadjacent, then witnessing that the resulting graph has a $\overline{K_3}$ or $I_2$).

\begin{table}[htb!]
    \centering
    \begin{TAB}(c)[0.25em]{c|cccccccccccccccccccccccccc}{c|cccccccccccccccccccccccccc}
         & 0 & 1 & 2 & 3 & 4 & 5 & 6 & 7 & 8 & 9 & 10 & 11 & 12 & 13 & 14 & 15 & 16 & 17 & 18 & 19 & 20 & 21 & 22 & 23 & 24 & 25 \\
        1  & X &  &  &  &  &  &  &  &  &  &  &  &  &  &  &  &  &  &  &  &  &  &  &  &  & \\
        2  & X & X &  &  &  &  &  &  &  &  &  &  &  &  &  &  &  &  &  &  &  &  &  &  &  & \\
        3  & X & X & X &  &  &  &  &  &  &  &  &  &  &  &  &  &  &  &  &  &  &  &  &  &  & \\
        4  & X & X & X & X &  &  &  &  &  &  &  &  &  &  &  &  &  &  &  &  &  &  &  &  &  & \\
        5  & X & X &  & X &  &  &  &  &  &  &  &  &  &  &  &  &  &  &  &  &  &  &  &  &  & \\
        6  & X & X &  &  & X & X &  &  &  &  &  &  &  &  &  &  &  &  &  &  &  &  &  &  &  & \\
        7  & X & X &  &  & X & X & X &  &  &  &  &  &  &  &  &  &  &  &  &  &  &  &  &  &  & \\
        8  & X & X &  & X &  & X & X & X &  &  &  &  &  &  &  &  &  &  &  &  &  &  &  &  &  & \\
        9  & X & X &  & X &  & X & X & X & X &  &  &  &  &  &  &  &  &  &  &  &  &  &  &  &  & \\
        10 & X & X &  &  & X & X & X & X & X & X &  &  &  &  &  &  &  &  &  &  &  &  &  &  &  & \\
        11 & X & X &  &  & X & X & X & X & X & X & X &  &  &  &  &  &  &  &  &  &  &  &  &  &  & \\
        12 & X & X &  & X &  & X & X & X & X & X & X & X &  &  &  &  &  &  &  &  &  &  &  &  &  & \\
        13 &  & X &  & X & X & X & X & X & X & X & X &  &  &  &  &  &  &  &  &  &  &  &  &  &  & \\
        14 &  &  &  &  &  & X & X & X & X &  &  & X & X & X &  &  &  &  &  &  &  &  &  &  &  & \\
        15 & X &  &  & X & X & X & X & X & X & X & X &  & X & X & X &  &  &  &  &  &  &  &  &  &  & \\
        16 &  & X &  & X & X & X & X & X & X & X & X & X &  & X & X & X &  &  &  &  &  &  &  &  &  & \\
        17 &  & X &  & X & X & X & X & X & X &  & X & X & X & X & X & X & X &  &  &  &  &  &  &  &  & \\
        18 & X &  &  & X & X & X & X & X & X & X &  & X & X & X & X & X & X & X &  &  &  &  &  &  &  & \\
        19 &  &  & X &  &  &  &  &  &  &  &  &  &  & X & X &  &  &  &  &  &  &  &  &  &  & \\
        20 &  & X &  & X & X & X & X &  &  & X & X & X & X & X & X & X & X &  &  & X &  &  &  &  &  & \\
        21 &  & X &  & X & X &  &  & X & X & X & X & X & X & X & X &  &  & X & X & X & X &  &  &  &  & \\
        22 &  &  & X &  &  & X & X & X &  & X & X & X & X &  &  & X & X & X & X & X &  &  &  &  &  & \\
        23 & X &  &  & X & X & X & X &  & X & X & X & X & X & X &  & X & X & X & X &  & X & X & X &  &  & \\
        24 &  & X &  & X & X & X & X & X &  & X & X & X & X & X &  & X & X & X & X &  & X & X & X & X &  & \\
        25 &  & X &  & X & X &  & X & X & X & X & X & X & X &  & X & X & X & X & X &  & X & X & X & X & X & \\
        26 & X &  &  & X & X & X &  & X & X & X & X & X & X &  & X & X & X & X & X &  & X & X & X & X & X & X
    \end{TAB}
    \caption{Pairs of sets $S_i$ and $S_j$ that must be complete to each other. Each such pair with $i>j$ is marked with an X in row $i$, column $j$.}
    \label{tab:pairs}
\end{table}

Then the sets given by
\begin{itemize}
\item $Q_1=\{4,5\} \cup S_{0} \cup S_{1} \cup S_{5} \cup S_{6} \cup S_{7} \cup S_{8} \cup S_{9} \cup S_{10} \cup S_{11} \cup S_{12},$
\item $Q_2=\{2,3\} \cup S_{3} \cup S_{4} \cup S_{20} \cup S_{21} \cup S_{23} \cup S_{24} \cup S_{25} \cup S_{26},$
\item $Q_3=\{2,3,6,7\} \cup S_{2} \cup S_{19} \cup S_{22},$ and
\item $Q_4=\{1,2\} \cup S_{13} \cup S_{14} \cup S_{15} \cup S_{16} \cup S_{17} \cup S_{18}$
\end{itemize}
are a clique cover of $G$, and $\abs{Q_1}+\abs{Q_2}+\abs{Q_3}+\abs{Q_4}\ge \abs{V(G)}+2$, so $\omega(G)\ge \frac{\abs{V(G)}+2}{4}$, so by Lemma~\ref{lem:fourclique}, $G$ is not a counterexample to HC-$\{\overline{K_3}\}$. This means our supposition that $G$ was $(H_7\vee \overline{K_2})$-free was incorrect, so $G$ has a vertex 8 and 9 both adjacent to $\{1,\dots,7\}$ but not adjacent to each other.

Then for $(1,8)$ to not be a dominating edge, $G$ must have a vertex 10 adjacent to neither 1 nor 8. Vertex 10 must be adjacent to 3 and 4. If 10 is adjacent to 2 or 5 then $\{9,8,10,1,4,2\}$ or $\{9,8,10,1,3,5\}$ induces $I_2$, respectively. But 10 cannot be nonadjacent to both 2 and 5, so we have a contradiction and $G$ does have an $I_2$.
\end{proof}

The proof above essentially used Algorithm~\ref{alg:fourcliqueadvanced} with $k=1$, finishing by eliminating a dominating edge. As can be seen, this clique cover technique is computationally intensive and rather impractical to verify by hand, even for this relatively small example.

\begin{lemma}
HC-$\{\overline{K_3}, I_{12}\}$ holds.
\end{lemma}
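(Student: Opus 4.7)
The plan is to prove this by starting from $I_2$ (the complement of $P_6$), which we have just shown is contained in every counterexample to HC-$\{\overline{K_3}\}$, and then running Algorithm~\ref{alg:dominating}. Label the vertices of the induced $I_2$ as $1, \ldots, 6$ following Figure~\ref{fig:intermediate}, so that the complement is the path $1\text{-}2\text{-}3\text{-}4\text{-}5\text{-}6$; then in $I_2$ itself, $1$'s only non-neighbor is $2$ and $6$'s only non-neighbor is $5$, and it is immediate that the edge $(1,6)$ is dominating (every other vertex lies in $N(1)\cup N(6)$).

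By Lemma~\ref{lem:dominating}, a minimal counterexample $G$ therefore contains a vertex $7$ nonadjacent to both $1$ and $6$. Since $G$ has no stable set of size $3$, vertex $7$ must be adjacent to $2$ (otherwise $\{1,2,7\}$ is stable) and to $5$ (otherwise $\{5,6,7\}$ is stable), and since $3$ and $4$ are non-adjacent in $I_2$, vertex $7$ must be adjacent to at least one of $\{3,4\}$. This yields three cases for $N(7)\cap\{1,\ldots,6\}$, namely $\{2,3,4,5\}$, $\{2,3,5\}$, and $\{2,4,5\}$. The $I_2$-automorphism $(1\,6)(2\,5)(3\,4)$ identifies the last two, so we are left with essentially two 7-vertex graphs to handle.

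For each case I would first check by direct inspection whether the resulting $7$-vertex graph already contains $I_{12}$ as an induced subgraph. If it does, that branch is done; if not, one looks for a dominating edge among the new edges (for instance, edges like $(1,7)$ or $(6,7)$, which are promising because $7$ has only two non-neighbors in the existing graph), adds another vertex, and iterates. The process should terminate after only a few rounds because $I_{12}$ has just one more vertex than $I_2$, and the structural constraint $\overline{K_3}$-free is very restrictive: each newly forced vertex has a large prescribed neighborhood in the existing graph, which tends to quickly produce the $\overline{C_5}$-with-two-opposite-pendants complement pattern that characterizes $I_{12}$.

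The main obstacle will be managing the branching of the casework and verifying, in each terminal branch, either an explicit induced $I_{12}$ or a reduction to a previously treated configuration. If at any point Algorithm~\ref{alg:dominating} gets stuck on a graph without dominating edges, I would fall back on Algorithm~\ref{alg:fourcliqueadvanced} to introduce a cone vertex and recover a dominating edge, mirroring the strategy used for the $H_7\vee\overline{K_2}$ step of the previous lemma. Given the relatively small gap between $I_2$ and $I_{12}$, however, I expect the plain dominating-edge argument to suffice, producing a proof comparable in length to that of HC-$\{\overline{K_3},I_{21}\}$.
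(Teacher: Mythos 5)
There is a genuine gap: your choice of dominating edge does not close the argument, and the remaining casework is left undone. With the edge $(1,6)$, the endpoints each have only one nonneighbor in $I_2$ ($2$ and $5$ respectively), so the new vertex $7$ is only forced onto $2$ and $5$ and retains freedom on $\{3,4\}$; this is exactly why you get three branches. Worse, neither of the two essentially distinct $7$-vertex graphs you reach contains an induced $I_{12}$: in the branch $N(7)=\{2,3,4,5\}$ the complement is the $7$-cycle $1\text{-}2\text{-}3\text{-}4\text{-}5\text{-}6\text{-}7\text{-}1$, and in the branch $N(7)=\{2,3,5\}$ the complement is the $6$-path plus the edges $7\text{-}1$, $7\text{-}4$, $7\text{-}6$; since both graphs have $7$ vertices, containing $I_{12}$ would mean being isomorphic to it, and neither is (compare degree sequences of the complements with that of $\overline{I_{12}}$, a $5$-cycle with two pendants). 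So your ``first check by direct inspection'' fails in every branch, and the proof would have to continue through several more rounds of vertex additions with no guarantee of quick termination.

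The fix is to choose the dominating edge $(2,5)$ instead, as the paper does. Its endpoints have nonneighbor sets $\{1,3\}$ and $\{4,6\}$, so a vertex $7$ nonadjacent to both $2$ and $5$ is forced (by $\overline{K_3}$-freeness) to be adjacent to all of $1,3,4,6$; there is a single branch, and the resulting $7$-vertex graph is precisely $I_{12}$ (the complement is the path $1\text{-}2\text{-}3\text{-}4\text{-}5\text{-}6$ plus the edges $7\text{-}2$ and $7\text{-}5$, i.e.\ the $5$-cycle $2\text{-}7\text{-}5\text{-}4\text{-}3\text{-}2$ with pendants $1$ at $2$ and $6$ at $5$). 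The general lesson is that when selecting a dominating edge one wants endpoints with as many nonneighbors as possible, since each nonneighbor forces an adjacency of the new vertex and cuts down the branching.
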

\begin{proof}
From the previous lemma, any minimal counterexample $G$ of HC-$\{\overline{K_3}\}$ has an induced $I_2$, say by vertices 1 through 6 with 1-2-3-4-5-6 a 6-path in the complement graph. For $(2,5)$ to not be dominating, $G$ must have a vertex 7 adjacent to neither 2 nor 5, but this vertex must be adjacent to 1, 3, 4, and 6, so $\{2,7,5,4,3,1,6\}$ induces $I_{12}$.
\end{proof}

\subsection{Data for Computer Verification}

As we discussed in Section~\ref{sec:behavior}, in Algorithm~\ref{alg:dominating} and Algorithm~\ref{alg:full}, one needs to choose a dominating edge whenever a graph has multiple. We describe our method for this in more detail in Section~\ref{sec:behavior} and Appendix~\ref{app:implementation}. The vertex labeling of the initial graph $H$ is given by the labelings in Figures~\ref{fig:main} and~\ref{fig:intermediate}. Whenever a vertex is added to a graph, it gets the smallest positive integer label not yet used in that graph. The chosen dominating edge is the one that minimizes the larger label of its endpoints.

We then present the necessary steps to verify Theorem~\ref{thm:main} as a directed forest. Each edge $G\to H$ means to run Algorithm~\ref{alg:full} starting from graph $G$ to prove HC-$\{\overline{K_3}, H\}$, i.e. it is the assertion $G\vdash_\Ac H$ for our implementation $\Ac$ of Algorithm~\ref{alg:full}.

There are four starting points used: $W_5$, $\overline{K_{2,3}}$, $K_7$, and $H_7$. All four are known to be induced subgraphs of any counterexample to HC-$\{\overline{K_3}\}$ due to Theorem~\ref{thm:past}. The labeling to use for $W_5$ is to label the vertices in the 5-cycle as 1-2-3-4-5 and the hub as 6. The labeling to use for $\overline{K_{2,3}}$ is that the part with three vertices gets labels 1, 3, and 5 and the part with two vertices gets labels 2 and 4. All labelings of $K_7$ are equivalent due to symmetry. The labeling to use for $H_7$ is to label a 5-cycle 1-2-3-4-5 and vertex 6 is adjacent to 1, 2, 3, 7 and vertex 7 is adjacent to 2, 3, 4, 7.

The directed forest is shown in Figure~\ref{fig:data}. There are many other possible connections between the graphs in the diagram; the arrows we display seek to minimize the time it takes for a computer to verify each step. We describe this in more detail in Section~\ref{sec:conclusion}, as well as explain briefly how this directed forest was found starting from Theorem~\ref{thm:past}.

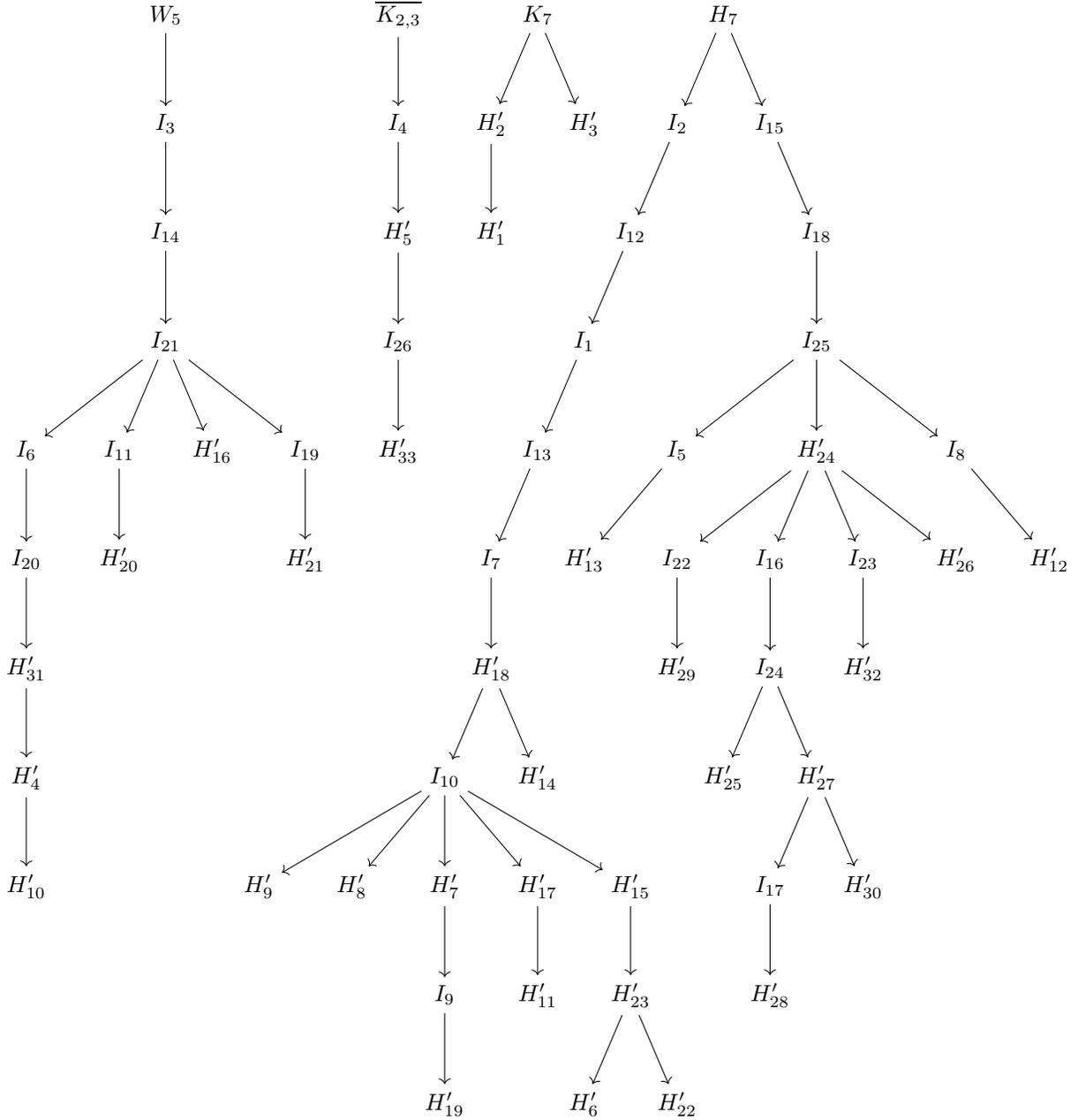
\begin{figure}[htbp!]
    \centering
    \begin{tikzpicture}[yscale=1.64,xscale=0.70]
\node (K23c) at (5, 0) {$\overline{K_{2,3}}$};
\node (I4) at (5, -1) {$I_{4}$};
\node (K7) at (8, 0) {$K_7$};
\node (H2p) at (7, -1) {$H_{2}'$};
\node (W5) at (0, 0) {$W_5$};
\node (I3) at (0, -1) {$I_{3}$};
\node (H7) at (12, 0) {$H_7$};
\node (I15) at (13, -1) {$I_{15}$};
\node (H3p) at (9, -1) {$H_{3}'$};
\node (I2) at (11, -1) {$I_{2}$};
\node (I12) at (10, -2) {$I_{12}$};
\node (I18) at (14, -2) {$I_{18}$};
\node (I14) at (0, -2) {$I_{14}$};
\node (H1p) at (7, -2) {$H_{1}'$};
\node (H5p) at (5, -2) {$H_{5}'$};
\node (I25) at (14, -3) {$I_{25}$};
\node (I26) at (5, -3) {$I_{26}$};
\node (I21) at (0, -3) {$I_{21}$};
\node (I1) at (9, -3) {$I_{1}$};
\node (I13) at (8, -4) {$I_{13}$};
\node (I5) at (11, -4) {$I_{5}$};
\node (H33p) at (5, -4) {$H_{33}'$};
\node (H24p) at (14, -4) {$H_{24}'$};
\node (I6) at (-3, -4) {$I_{6}$};
\node (I8) at (17, -4) {$I_{8}$};
\node (I11) at (-1, -4) {$I_{11}$};
\node (H16p) at (1, -4) {$H_{16}'$};
\node (I19) at (3, -4) {$I_{19}$};
\node (H21p) at (3, -5) {$H_{21}'$};
\node (H13p) at (9, -5) {$H_{13}'$};
\node (I7) at (7, -5) {$I_{7}$};
\node (H20p) at (-1, -5) {$H_{20}'$};
\node (I22) at (11, -5) {$I_{22}$};
\node (I16) at (13, -5) {$I_{16}$};
\node (I23) at (15, -5) {$I_{23}$};
\node (H26p) at (17, -5) {$H_{26}'$};
\node (I20) at (-3, -5) {$I_{20}$};
\node (H12p) at (19, -5) {$H_{12}'$};
\node (H18p) at (7, -6) {$H_{18}'$};
\node (H32p) at (15, -6) {$H_{32}'$};
\node (H31p) at (-3, -6) {$H_{31}'$};
\node (H29p) at (11, -6) {$H_{29}'$};
\node (I24) at (13, -6) {$I_{24}$};
\node (H4p) at (-3, -7) {$H_{4}'$};
\node (H25p) at (12, -7) {$H_{25}'$};
\node (H14p) at (8, -7) {$H_{14}'$};
\node (H27p) at (14, -7) {$H_{27}'$};
\node (I10) at (6, -7) {$I_{10}$};
\node (H7p) at (6, -8) {$H_{7}'$};
\node (H17p) at (8, -8) {$H_{17}'$};
\node (I17) at (13, -8) {$I_{17}$};
\node (H15p) at (10, -8) {$H_{15}'$};
\node (H8p) at (4, -8) {$H_{8}'$};
\node (H30p) at (15, -8) {$H_{30}'$};
\node (H10p) at (-3, -8) {$H_{10}'$};
\node (H9p) at (2, -8) {$H_{9}'$};
\node (H23p) at (10, -9) {$H_{23}'$};
\node (H28p) at (13, -9) {$H_{28}'$};
\node (H11p) at (8, -9) {$H_{11}'$};
\node (I9) at (6, -9) {$I_{9}$};
\node (H6p) at (9, -10) {$H_{6}'$};
\node (H19p) at (6, -10) {$H_{19}'$};
\node (H22p) at (11, -10) {$H_{22}'$};
\draw[->] (K23c) -- (I4);
\draw[->] (K7) -- (H2p);
\draw[->] (W5) -- (I3);
\draw[->] (H7) -- (I15);
\draw[->] (K7) -- (H3p);
\draw[->] (H7) -- (I2);
\draw[->] (I2) -- (I12);
\draw[->] (I15) -- (I18);
\draw[->] (I3) -- (I14);
\draw[->] (H2p) -- (H1p);
\draw[->] (I4) -- (H5p);
\draw[->] (I18) -- (I25);
\draw[->] (H5p) -- (I26);
\draw[->] (I14) -- (I21);
\draw[->] (I12) -- (I1);
\draw[->] (I1) -- (I13);
\draw[->] (I25) -- (I5);
\draw[->] (I26) -- (H33p);
\draw[->] (I25) -- (H24p);
\draw[->] (I21) -- (I6);
\draw[->] (I25) -- (I8);
\draw[->] (I21) -- (I11);
\draw[->] (I21) -- (H16p);
\draw[->] (I21) -- (I19);
\draw[->] (I19) -- (H21p);
\draw[->] (I5) -- (H13p);
\draw[->] (I13) -- (I7);
\draw[->] (I11) -- (H20p);
\draw[->] (H24p) -- (I22);
\draw[->] (H24p) -- (I16);
\draw[->] (H24p) -- (I23);
\draw[->] (H24p) -- (H26p);
\draw[->] (I6) -- (I20);
\draw[->] (I8) -- (H12p);
\draw[->] (I7) -- (H18p);
\draw[->] (I23) -- (H32p);
\draw[->] (I20) -- (H31p);
\draw[->] (I22) -- (H29p);
\draw[->] (I16) -- (I24);
\draw[->] (H31p) -- (H4p);
\draw[->] (I24) -- (H25p);
\draw[->] (H18p) -- (H14p);
\draw[->] (I24) -- (H27p);
\draw[->] (H18p) -- (I10);
\draw[->] (I10) -- (H7p);
\draw[->] (I10) -- (H17p);
\draw[->] (H27p) -- (I17);
\draw[->] (I10) -- (H15p);
\draw[->] (I10) -- (H8p);
\draw[->] (H27p) -- (H30p);
\draw[->] (H4p) -- (H10p);
\draw[->] (I10) -- (H9p);
\draw[->] (H15p) -- (H23p);
\draw[->] (I17) -- (H28p);
\draw[->] (H17p) -- (H11p);
\draw[->] (H7p) -- (I9);
\draw[->] (H23p) -- (H6p);
\draw[->] (I9) -- (H19p);
\draw[->] (H23p) -- (H22p);
    \end{tikzpicture}
    \caption{The directed forest for verifying Theorem~\ref{thm:main}.}
    \label{fig:data}
\end{figure}

\section{Final Remarks}
\label{sec:conclusion}


Call the \textit{weight} (relative to an implementation $\Ac$ of Algorithm~\ref{alg:full}) of input $(H, G)$ the sum over the graphs $H'$ added to $A$ at any point in the algorithm (including $H$) of $2^{\abs{V(H')}}$, assuming $H\vdash_\Ac G$ so that this weight is finite. This measure is designed to give a very rough estimate of the amount of time it takes for Algorithm~\ref{alg:full} to prove HC-$\{\overline{K_3}, G\}$ starting from $H$. This is because there are several parts of the algorithm that scale exponentially (or at least super-polynomially) in the size of the graph $H'$ considered at some point:
\begin{itemize}
    \item When one constructs graphs by adding a vertex to $H'$ while either avoiding or introducing a dominating edge, there may be exponentially many such graphs.
    \item In several places we must solve the subgraph isomorphism problem, which is NP-complete in general. Even though this is only polynomial-time for fixed $G$, recall that in some places we check if graphs are $A$-free with $A$ containing possibly arbitrarily large graphs, so we are still potentially subject to the full NP-completeness of this problem.
    \item Running Algorithm~\ref{alg:fourcliqueadvanced} requires possibly constructing an auxiliary graph with exponentially many vertices. Worse, we then try to find a clique cover of this graph with four cliques, which is equivalent to 4-coloring the complement graph and is thus NP-complete in general. The total running time of Algorithm~\ref{alg:fourcliqueadvanced} is therefore actually potentially doubly-exponential in the number of vertices of the input.
\end{itemize}

Weight the directed edges of Figure~\ref{fig:data} according to this metric. The proof data expressed in Figure~\ref{fig:data} is optimized to minimize the largest weight in any individual edge from a graph $H_i'$ to one of the roots using our specific implementation of Algorithm~\ref{alg:full}. In fact, the way these proofs were found was by running in parallel all inputs $(H, G)$ with $H$ a maximal graph for which HC-$\{\overline{K_3}, H\}$ is known and $G$ a minimal graph that is not an induced subgraph of any graph $H'$ for which HC-$\{\overline{K_3}, H'\}$ is known, where by ``parallel'' we mean at each point in time to step forward whichever instance currently has the lowest running total weight. In this way, when an instance returns \textsc{success}, we can be sure it is the most efficient possible improvement.

This was performed starting from the graphs in Theorem~\ref{thm:past} until no new improvements were found with a weight less than $2.3\cdot 10^{7}$. There are 69 minimal graphs that are not a subgraph of any $H_i'$ or $K_7$ or $H_7$, though several of these (such as $\overline{K_{3,3}}$) could not possibly be proved by the methods in this paper due to Theorem~\ref{thm:corefailure}. We expect if one continues to run this process, one will continue to find new graphs for which HC-$\{\overline{K_3}, H\}$ holds for a long time, though the improvements become less frequent over time.

The weights for each edge in Figure~\ref{fig:data} are given in Appendix~\ref{app:moredata}.

There are several routes that could improve the results obtained in this paper. First, an mentioned in a footnote in the introduction, there is a recent improvement to Lemma~\ref{lem:conings} \cite{trianglefork}, so one can replace the tree $T_1$ with the larger $T_1'$ obtained by adding a second leaf attached to the third vertex of the 5-path of $T_1$, and replace $F_3$ accordingly. We expect perhaps a few more results can be obtained with this substitution.

Second, Lemma~\ref{lem:dominating} has a stronger version:
\begin{lemma}\label{lem:matching}\cite{pst}
Any minimal counterexample to HC-$\{\overline{K_3}\}$ has no connected dominating matching.
\end{lemma}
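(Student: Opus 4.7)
The plan is to generalize the argument behind Lemma~\ref{lem:dominating} by replacing the contraction of a single dominating edge with the contraction of the entire connected dominating matching. Suppose for contradiction that $G$ is a minimal counterexample to HC-$\{\overline{K_3}\}$ admitting a connected dominating matching $M$. Because $G[V(M)]$ is connected, I would contract all of $V(M)$ to a single vertex $w$ (iteratively contracting the edges of a spanning tree of $G[V(M)]$), obtaining a graph $G'$ on $\abs{V(G)} - \abs{V(M)} + 1$ vertices, strictly smaller than $G$.

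The first check is that $G'$ is $\overline{K_3}$-free. Any stable triple of $G'$ avoiding $w$ is already a stable triple of $G$, contradicting that $G$ is $\overline{K_3}$-free. Any stable triple of $G'$ containing $w$ consists of $w$ together with two vertices of $V(G)\setminus V(M)$, each of which has no neighbor in $V(M)$, contradicting that $V(M)$ dominates $G$. By minimality of $G$, HC holds for $G'$, so $\chi(G')\le h(G')$. Because $V(M)$ dominates $G$, the vertex $w$ is universal in $G'$, and peeling it off gives
\[ \chi(G - V(M)) = \chi(G') - 1 \le h(G') - 1 = h(G - V(M)). \]
Moreover, since $G[V(M)]$ is connected and each vertex outside $V(M)$ has a neighbor in $V(M)$, the set $V(M)$ can serve as an additional branch set alongside a $K_{h(G-V(M))}$-minor of $G - V(M)$, yielding $h(G) \ge h(G - V(M)) + 1$.

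The main obstacle is converting these bounds on $G - V(M)$ into the desired contradiction $h(G) \ge \chi(G)$. The naive bound $\chi(G) \le \chi(G - V(M)) + \chi(G[V(M)])$ is too weak, because $\chi(G[V(M)])$ need not be small. The strategy I would pursue is to sharpen the minor construction: partition $V(M) = A \sqcup B$ into two connected pieces joined by an edge (obtained by removing a suitable edge from a spanning tree of $G[V(M)]$ that contains $M$), and argue that in $G$ both $A$ and $B$ have an edge to every branch set of a fixed $K_{h(G-V(M))}$-minor of $G - V(M)$, promoting $h(G)$ to at least $h(G - V(M)) + 2$. Combined with vertex-criticality of $G$---which, by minimality, gives $\chi(G) = h(G) + 1$ and forces $\chi(G - V(M)) \ge \chi(G) - 2$---this would close the chain of inequalities to a contradiction.

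The technically hardest step is guaranteeing that, after the split of $V(M)$, both $A$ and $B$ remain adjacent to every branch set $B_i$. Here both hypotheses of the lemma are essential: the ``matching'' hypothesis provides the disjoint edges of $M$ that let one shift individual vertices between $A$ and $B$ without destroying connectedness, and the ``connected'' hypothesis ensures that $A$ and $B$ can be chosen with an $A$-$B$ edge. The $\overline{K_3}$-free hypothesis then supplies the common neighbors needed to force completeness to each $B_i$; making this final bookkeeping work uniformly across all branch sets is the crux of the argument.
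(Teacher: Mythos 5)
First, a point of comparison: the paper does not prove this lemma at all --- it is quoted from \cite{pst} --- so there is no internal proof to measure your argument against; it must stand on its own. It does not, for several concrete reasons.

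The appeal to minimality for the contracted graph $G'$ is not justified under the paper's definition of ``minimal counterexample'' (no proper \emph{induced subgraph} is a counterexample): $G'$ is a contraction minor of $G$, not an induced subgraph. This particular slip is harmless only because the fact you actually use, $\chi(G-V(M))\le h(G-V(M))$, follows from minimality applied to the induced subgraph $G-V(M)$ directly. Much more seriously, the endgame arithmetic cannot close except when $M$ is a single edge: for a matching with $k$ edges one has $\abs{V(M)}=2k$, and criticality only gives $\chi(G-V(M))\ge \chi(G)-2k$, not $\ge\chi(G)-2$ as you assert; a gain of $+2$ in the Hadwiger number is then hopeless for $k\ge 2$, so your plan at best re-proves Lemma~\ref{lem:dominating}, which is already available. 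Even in the $k=1$ case, the step you explicitly defer --- that both halves $A=\{u\}$ and $B=\{v\}$ of a dominating edge attach to every branch set $B_i$ --- is exactly where the content of the lemma lies, and it is false as stated: by $\overline{K_3}$-freeness the non-neighbours of $u$ form a clique, and a branch set of the minor of $G-u-v$ may lie entirely inside that clique, in which case $u$ has no edge to it. No amount of ``bookkeeping'' fixes this without a genuinely new idea about how to re-choose the branch sets.

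Finally, contracting all of $V(M)$ to a single vertex throws away the strength of the hypothesis. ``Dominating'' for a matching means every outside vertex is adjacent to \emph{each} edge of $M$, so using the $k$ edges themselves as $k$ separate branch sets (pairwise adjacent because the matching is connected, and each adjacent to every branch set of a largest clique minor of $G-V(M)$ because of per-edge domination) already yields $h(G)\ge h(G-V(M))+k$ with no work. The real difficulty, which your sketch never addresses, is on the colouring side: one must show the $2k$ vertices of $V(M)$ can be absorbed using only $k$ additional colours (equivalently, that $\chi(G)\le\chi(G-V(M))+k$), even though each matched pair $u_iv_i$ is an edge and so cannot share a colour class. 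That is the step that requires the argument of \cite{pst}.
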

Here, a \textit{connected matching} is a set of vertex-disjoint edges that are pairwise adjacent. It is \textit{dominating} if additionally every vertex outside the matching is adjacent to every edge in the matching. A dominating edge is just a connected dominating matching with one edge. This substitution theoretically allows you to sidestep Theorem~\ref{thm:corefailure}, especially the restriction on $\overline{C_4}$-cores; for instance it was the main ingredient used to prove that HC-$\{\overline{K_3},H_7\}$ holds in \cite{pst}. It is clear how to extend Algorithm~\ref{alg:dominating} to incorporate this stronger result. However, it is not immediately clear how to reconcile it with Algorithm~\ref{alg:fourclique}/\ref{alg:fourcliqueadvanced}, since it is conjectured:

\begin{conjecture}\label{conj:ssh}
All $\overline{K_3}$-free graphs on $n$ vertices have a set of $\ceil{n/2}$ pairwise adjacent vertices or edges.
\end{conjecture}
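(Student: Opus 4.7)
The plan is to attack Conjecture~\ref{conj:ssh} by strong induction on $n$, pivoting on a maximum clique $K$ of $G$. The base cases (say $n\le 5$) can be verified directly, and the case $\omega(G)\ge \lceil n/2\rceil$ is trivial, with $K$ itself serving as the required collection of singletons. So in the inductive step one may assume $|K|<\lceil n/2\rceil$, in which case the remainder $R=V(G)\setminus K$ satisfies $|R|>\lceil n/2\rceil$ and must contribute edges (not just singletons) to the collection.

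The key structural input is that $\alpha(G)\le 2$ forces the following: for any non-adjacent pair $u,v\in R$, every vertex $w\in K$ is adjacent to at least one of $u,v$, else $\{u,v,w\}$ is a stable triple. In outline I would: (a) apply the inductive hypothesis to $G[R]$ to obtain a collection $\mathcal{S}$ of $\lceil |R|/2\rceil$ pairwise adjacent vertices and edges inside $R$; (b) select a subclique $K'\subseteq K$ of size $\lceil n/2\rceil - |\mathcal{S}|$ such that every element of $\mathcal{S}$ is adjacent (via at least one endpoint, for edges) to every vertex of $K'$; and (c) combine $\mathcal{S}\cup K'$, observing that the required pairwise adjacency holds by the choice of $K'$ together with the fact that $K$ is a clique.

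The main obstacle is step (b). An edge $uv\in\mathcal{S}$ with $u\not\sim w$ and $v\not\sim w'$ for some $w,w'\in K$ can forbid both $w$ and $w'$ from $K'$, and summed over all edges in $\mathcal{S}$ the forbidden vertices may exhaust $K$. The inductive hypothesis applied to $G[R]$ is blind to how $\mathcal{S}$ interacts with $K$, so one likely needs a strengthened induction hypothesis that also tracks $K$-adjacency of the chosen elements, at which point the problem begins to resemble HC-$\{\overline{K_3}\}$ itself in difficulty. A plausible refinement is to interleave the inductive construction with the clique-cover machinery of Lemma~\ref{lem:fourclique} and Algorithm~\ref{alg:fourcliqueadvanced}: any failure of step (b) exposes a dense pattern of non-edges between $\mathcal{S}$ and $K$ which can be exploited by a pigeonhole or averaging argument, either producing the desired $K'$ or giving a four-clique cover that implies the conclusion through a different route. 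I expect that a complete proof along these lines, if possible, will resemble a ``Hadwiger-lite'' structural theorem for $\overline{K_3}$-free graphs, and partial progress should already follow from an algorithmic variant of Algorithm~\ref{alg:full} tailored to certifying the existence of a size-$\lceil n/2\rceil$ pairwise adjacent collection rather than an induced subgraph.
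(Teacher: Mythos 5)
This statement is a \emph{conjecture}, not a theorem: the paper states it (citing \cite{blasiak} and \cite{packingseagulls}) precisely because it is open, offers no proof, and only uses it to explain why a na\"ive strengthening of Algorithm~\ref{alg:dominating} would neutralize Algorithms~\ref{alg:fourclique} and~\ref{alg:fourcliqueadvanced}. Note also what you would be proving: a set of $\ceil{n/2}$ pairwise adjacent vertices and edges yields a $K_{\ceil{n/2}}$ minor, and for $\alpha(G)\le 2$ it is classical that $h(G)\ge\ceil{n/2}$ for all such $G$ is equivalent to HC-$\{\overline{K_3}\}$ itself. So Conjecture~\ref{conj:ssh} implies the full $\alpha=2$ case of Hadwiger's Conjecture, the open problem motivating the entire paper; any complete elementary induction would be a major breakthrough, which should already raise suspicion about the plan.

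The concrete gap is the one you name yourself: step (b) fails. After applying induction to $G[R]$ you need roughly $\floor{\abs{K}/2}$ vertices of $K$ adjacent to every element of $\mathcal{S}$, but nothing controls the non-edges between $\mathcal{S}$ and $K$. The $\alpha\le 2$ hypothesis only forces $w\in K$ to hit one of a \emph{non-adjacent} pair $u,v$; for an edge $uv\in\mathcal{S}$ the triple $\{u,v,w\}$ is never stable, so $w$ may miss both endpoints, and a single vertex of $\mathcal{S}$ may have arbitrarily many non-neighbours in $K$ (its non-neighbourhood is merely a clique, with no size bound). Summed over $\mathcal{S}$ the forbidden set can exhaust $K$, and the inductive hypothesis on $G[R]$ gives you no leverage to choose $\mathcal{S}$ compatibly with $K$. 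The proposed repairs (a strengthened hypothesis tracking $K$-adjacency, or falling back on Lemma~\ref{lem:fourclique}) are not carried out and, as you observe, reconstitute the original difficulty. The proposal is an honest research sketch, but it is not a proof, and no proof of this statement exists in the paper or, to date, in the literature.
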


This conjecture is called \textit{Seymour's Strengthening} of HC-$\{\overline{K_3}\}$ in \cite{blasiak} or the \textit{matching version} of HC-$\{\overline{K_3}\}$ in \cite{packingseagulls}. It is very close to conjecturing that all $\overline{K_3}$-free graphs have a connected dominating matching (which is a slightly stronger statement), so at the very least one should expect ``most'' $\overline{K_3}$-free graphs to have a connected dominating matching. Thus, na\"ively replacing the check for dominating edges with a check for connected dominating matchings likely completely removes the impact of Algorithms~\ref{alg:fourclique} and~\ref{alg:fourcliqueadvanced}.

Third, if one marks vertices in the graph and only checks for subgraphs containing the set of marked vertices, rather than subgraphs in the whole graph, one can prove theorems of the form ``In any minimal counterexample to HC-$\{\overline{K_3}\}$, every induced $G$ must be part of an induced $G'$'' where $G'$ contains $G$ as an induced subgraph. For example:

\begin{theorem}
In any minimal counterexample to HC-$\{\overline{K_3}\}$, every vertex must be found in some induced $C_5$.
\end{theorem}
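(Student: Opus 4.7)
The approach is to apply a marked-vertex variant of Algorithm~\ref{alg:full}, marking a vertex $v$ of the minimal counterexample $G$ and taking the monotone property $P$ to be ``the marked vertex lies in an induced $C_5$''. Two preliminary observations pin down the local structure at $v$. First, $v$ cannot be a universal vertex of $G$: if it were, then $h(G) \ge h(G-v)+1$ (add $\{v\}$ as a new branch set to any $K_t$-minor of $G-v$) while $\chi(G) = \chi(G-v)+1$, so $G$ is a counterexample to HC-$\{\overline{K_3}\}$ iff $G-v$ is, contradicting minimality. Hence $\overline{N}(v)$ is non-empty, and since $\alpha(G)\le 2$ it is a clique (two non-adjacent non-neighbors of $v$ would form an $\overline{K_3}$ with $v$); moreover $N(v)$ has independence number at most $2$.

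By Theorem~\ref{thm:past}, $G$ contains some induced $C_5$. If $v$ lies on this $C_5$, we are done. Otherwise, because $\alpha(G)\le 2$, the non-neighbors of $v$ on the $C_5$ form an independent set in that $C_5$, hence contain at most $2$ vertices, and if there are $2$ they sit at $C_5$-distance $2$. Up to the symmetry of $C_5$, this gives three starting configurations: $v$ has $5$, $4$, or $3$ neighbors on the reference $C_5$.

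For each of these three starting marked graphs, run the marked-vertex form of Algorithm~\ref{alg:full} with the property $P$ above. Lemma~\ref{lem:dominating} applies unchanged because it only concerns minimal counterexamples, while Lemmas~\ref{lem:fourclique} and~\ref{lem:conings} may be used with the constraint that all witnessing subgraphs contain the marked $v$. Starting from the configuration with $v$ adjacent to all five $C_5$-vertices (a $W_5$ with $v$ as hub), we extract a non-neighbor of $v$ by applying Lemma~\ref{lem:dominating} to an edge $vu$ on the wheel, then use $\alpha(G)\le 2$ together with $\overline{N}(v)$ being a clique to locate an induced $C_5$ running $v\text{--}a\text{--}b\text{--}c\text{--}d\text{--}v$ with $a,d \in N(v)$ non-adjacent and $b,c \in \overline{N}(v)$. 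The other two starting configurations are handled similarly, with the iterative growth of the graph around $v$ controlled by the no-dominating-edges condition.

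The main obstacle is the case in which $v$ has only $3$ neighbors on the reference $C_5$, since then no induced $C_5$ through $v$ is immediately visible and new vertices must be produced via Lemma~\ref{lem:dominating} applied to edges incident to $v$. The marked-vertex restriction is a genuine weakening of the algorithm: $v$ cannot be discarded when testing $A$-freeness, so intermediate graphs tend to be larger and the search tree wider. Some case analysis is also needed to ensure that the four-clique cover step of Lemma~\ref{lem:fourclique}, applied to the full graph rather than only the neighborhood of $v$, remains strong enough to close off branches without forcing the desired induced $C_5$ through $v$ to be lost. The expectation is that the combined constraints that $\overline{N}(v)$ is a clique, $\alpha(N(v))\le 2$, and no edge of $G$ is dominating, are strong enough to force termination of the marked algorithm in each of the three cases.
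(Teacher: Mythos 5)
There is a genuine gap: your argument is a plan, not a proof. It ends by asserting that ``the expectation is that the combined constraints \dots\ are strong enough to force termination of the marked algorithm in each of the three cases,'' but none of the three marked-algorithm runs is actually carried out, and even the one case you sketch (the $W_5$ configuration) only asserts that an induced $C_5$ through $v$ can be ``located'' without exhibiting the required adjacencies and non-adjacencies among $a,b,c,d$. A proof by (marked) Algorithm~\ref{alg:full} is only a proof once the algorithm is exhibited to terminate with \textsc{success}; without that, nothing has been established. There is also a concrete error in your case setup: since $\alpha(G)\le 2$, any two non-neighbors of $v$ must be \emph{adjacent} (else they form a $\overline{K_3}$ with $v$), so the non-neighbors of $v$ on the reference $C_5$ form a clique, i.e.\ at most two vertices at $C_5$-distance $1$ --- not an independent set at distance $2$ as you claim. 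This changes the structure of your hardest case.

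The larger issue is that all of this machinery is unnecessary. The statement follows from Lemma~\ref{lem:dominating} alone, applied twice, with no reference to Theorem~\ref{thm:past}, no case analysis on how $v$ meets a pre-existing $C_5$, and no clique covers: given the vertex $a$, take a non-neighbor $c$ adjacent to some neighbor $b$ of $a$ (such exist by connectivity and non-universality); since $ab$ is not dominating there is $d$ non-adjacent to both $a$ and $b$, forced adjacent to $c$; since $bc$ is not dominating there is $e$ non-adjacent to both $b$ and $c$, forced adjacent to $a$ and $d$; then $a$-$b$-$c$-$d$-$e$-$a$ is an induced $C_5$ through $a$. You should look for this kind of direct extraction before reaching for the full algorithmic framework.
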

\begin{proof}
Suppose $G$ is a minimal counterexample to HC-$\{\overline{K_3}\}$. Let $a$ be an arbitrary vertex to $G$. Obviously $a$ must have a nonneighbor $c$, and $G$ must be connected so we may take $c$ to be adjacent to a neighbor of $a$, say $b$. The edge $ab$ is not a dominating edge, so there is a vertex $d$ adjacent to neither $a$ nor $b$; this vertex must be adjacent to $c$ to avoid $\overline{K_3}$. Then for $bc$ to not be a dominating edge, there is a vertex $e$ adjacent to neither $b$ nor $c$; this vertex must be adjacent to $a$ and $d$. Then $\{a,b,c,d,e\}$ is an induced $C_5$ containing $a$.
\end{proof}

In a future paper, we will modify the algorithms in this paper to obtain many more results like the one above and also incorporate the stronger versions of Lemmas~\ref{lem:dominating} and~\ref{lem:conings}.

\section{Acknowledgements}

This work was supported by funding from Princeton University for undergraduate independent projects. Thank you to Maria Chudnovsky, Paul Seymour, Zach Hunter, and Jan Goedgebeur for helpful comments on the paper and work.

\bibliographystyle{alpha}
\bibliography{biblio}

\newpage
\appendix

\section{Further Implementation Details and Code}
\label{app:implementation}

Graph computations were done using the igraph Python package (\cite{igraph}). In many places one must check if a graph has a particular induced subgraph (either $\overline{K_3}$ for the independence number 3 checks, or some given graph $H$ for the the property $P$ checks whenever $P$ is the property of containing an induced $H$). This problem is NP-complete, but it can be solved quickly enough for small graphs such as ours. In our case, we used igraph's implementation of the LAD algorithm.

To perform the check for a four-clique cover of $G$ at the end of Algorithm~\ref{alg:fourclique}, we model it as a \textsc{sat} instance and use a \textsc{sat} solver. Specifically, there are four variables $v_1$, $v_2$, $v_3$, $v_4$ for each vertex $v$ in $G$ corresponding to the four cliques that $v$ can be included in. The boolean formula is the conjunction of the following clauses:
\begin{itemize}
    \item $(v_1\vee v_2\vee v_3\vee v_4)$ for each $v\in V(G)$, ensuring that $Q_1,Q_2,Q_3,Q_4$ cover $G$;
    \item $(\neg v_i\vee \neg u_i)$ for each $uv\in E(G)$ and $i=1,2,3,4$, ensuring that $Q_1,Q_2,Q_3,Q_4$ are cliques;
    \item $(\sum_{i=1}^4\sum_{v\in V(H)}v_i\ge \abs{H}+2)$, ensuring the additional condition $\sum_{i=1}^4 \abs{Q_i\cap V(H)}\ge \abs{H}+2$.
\end{itemize}
We used PySAT (\cite{pysat}) to solve these \textsc{sat} instances.

Also, often one needs to generate the set of graphs formed by adding one vertex $v$ to a graph $H$ with some restrictions on the result. Instead of trying to add $v$ to $H$, we instead start with $v$ and then add the vertices of $H$ one at a time, at each step checking the imposed restrictions. This allows one to ``bail out early,'' so to speak, if a potential attachment does not satisfy the restrictions. Some care must be taken afterwards to fix the vertex labelings so that $H$ gets labels 1 through $\abs{V(H)}$ and $v$ gets label $\abs{V(H)}+1$.

Finally, much computation can be reused when running Algorithm~\ref{alg:fourcliqueadvanced} with the same input aside from $k$. In fact, the sets $A$, $B$, and $C$ are the same regardless of $k$, so we temporarily cache them whenever Algorithm~\ref{alg:full} reaches the part where Algorithm~\ref{alg:fourcliqueadvanced} should be run.

We provide code at \href{https://github.com/dcartermath/hc-forbidden-subgraphs}{https://github.com/dcartermath/hc-forbidden-subgraphs} which can be used to verify Theorem~\ref{thm:main}. We used Python version 3.9.6, igraph version 0.9.11, and PySAT version 0.1.7.dev19, though the code most likely works fine with newer versions. The program reads the TeX source of Figures~\ref{fig:main},~\ref{fig:intermediate}, and~\ref{fig:data}, respectively copied into files \verb|hp_figure|, \verb|i_figure|, and \verb|edges_figure| put in the same directory as this program, to determine the graphs, labelings, and steps to verify Theorem~\ref{thm:main}, thus making absolutely sure that the data provided in the main body of this paper is correct. Each line of output is one arrow of Figure~\ref{fig:data} consisting of the name of the input graph, the name of the output graph, the total number of graphs ever added to the set $A$, and the weight.

In the process of discovering Theorem~\ref{thm:main}, we did not immediately use the optimizations discussed above. Initially, we simply brute-force checked for four-clique covers with the desired properties, and computed the set of graphs formed by adding a single vertex to a given graph $H$ by simply trying all $2^{\abs{V(H)}}$ possibilities for the neighbor set of the new vertex. Whenever we made an optimization, we verified the newer version gave an identical state to the older version run for one hour. Thus, we are confident in the correctness of our implementation.

\section{More Data}
\label{app:moredata}

Running the code in Appendix~\ref{app:implementation}, after some minutes or hours of computation, produces the following output:
\begin{verbatim}
K23c I4 2 544
K7 H2p 121 317568
W5 I3 5 832
H7 I15 2 640
K7 H3p 600 6222976
H7 I2 2 640
I2 I12 4 576
I15 I18 27 194688
I3 I14 1 64
H2p H1p 532 2454144
I4 H5p 26 423488
I18 I25 1 128
H5p I26 65 324736
I14 I21 2 384
I12 I1 7 5248
I1 I13 2 192
I25 I5 336 3440896
I26 H33p 348 20723968
I25 H24p 134 715008
I21 I6 20 42752
I25 I8 76 527616
I21 I11 33 102144
I21 H16p 139 768768
I21 I19 35 84736
I19 H21p 121 336128
I5 H13p 524 7547520
I13 I7 17 19584
I11 H20p 73 407168
H24p I22 522 3486976
H24p I16 95 604928
H24p I23 989 22616320
H24p H26p 136 611072
I6 I20 32 28800
I8 H12p 131 411776
I7 H18p 112 345216
I23 H32p 1 256
I20 H31p 45 1067776
I22 H29p 333 6226688
I16 I24 1 128
H31p H4p 338 5561856
I24 H25p 1348 15871232
H18p H14p 566 14731520
I24 H27p 94 621824
H18p I10 265 1639680
I10 H7p 256 1474432
I10 H17p 218 1423232
H27p I17 354 8121088
I10 H15p 265 11495296
I10 H8p 756 10430336
H27p H30p 327 9972480
H4p H10p 899 10312832
I10 H9p 305 1976192
H15p H23p 479 15760640
I17 H28p 1 128
H17p H11p 341 1031424
H7p I9 369 3561216
H23p H6p 723 13390592
I9 H19p 19 8320
H23p H22p 74 17303296
\end{verbatim}
The four columns here separated by spaces are the in-vertex $G$ and out-vertex $H$ of an arrow of Figure~\ref{fig:data}, then the total number of graphs added to $A$ when Algorithm~\ref{alg:full} is run with input $(G, H)$, and finally the weight of the arrow, which is the sum over $H'$ ever added to $A$ of $2^{\abs{V(H')}}$.

\section{Proving HC-\texorpdfstring{$\{\overline{K_3},K_8\}$}{\{K3, K8\}}}
\label{app:k8}

As we mentioned in the introduction, Theorem~\ref{thm:main} is no help for proving Theorem~\ref{thm:k8} because all 477142 $\{\overline{K_3},K_8\}$-free graph on 27 vertices contains all $H_i'$, and $K_7$ and $H_7$. However, the vast majority of these graphs have a dominating edge, and it turns out that the ones that don't have a connected dominating matching with two edges. Specifically:
\begin{proposition}\label{prop:k8specific}
Of the 477142 $\{\overline{K_3},K_8\}$-free graph on 27 vertices, 455344 have a dominating edge and the remaining 21798 have a connected dominating matching of two edges.
\end{proposition}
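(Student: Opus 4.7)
The proof will be a direct computational verification, since the statement concerns an explicit finite list of graphs and easily-checkable local properties. The plan is as follows.

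First I would obtain the list of 477142 $\{\overline{K_3},K_8\}$-free graphs on 27 vertices; these are available in the standard Ramsey-graph databases associated with $R(3,8)=28$, e.g.\ from McKay's online repository referenced by \cite{ramsey}, encoded in graph6 format. I would load them into memory one at a time (as 27-vertex graphs this is trivial both in storage and in I/O cost) and pass each to two predicate tests.

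Second, I would implement the dominating-edge test: iterate over all $\binom{27}{2}$ pairs $uv$ that are edges of $G$, and for each such edge check whether $N(u)\cup N(v)\cup\{u,v\}=V(G)$. This runs in $O(|V|\cdot|E|)$ time per graph, which is entirely negligible for $|V|=27$. Each graph is classified as dominating-edge-bearing or not, and I would accumulate the count of the former; the claim to be checked is that this count is exactly $455344$.

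Third, for each of the $477142-455344$ remaining graphs I would test for a connected dominating matching of size two. Concretely, I would iterate over ordered pairs of edges $(u_1v_1,u_2v_2)$ with $\{u_1,v_1\}\cap\{u_2,v_2\}=\varnothing$, check that at least one of the four cross-pairs is an edge of $G$ (the connectedness condition), and then check that every vertex $w\notin\{u_1,v_1,u_2,v_2\}$ satisfies $w\in N(u_1)\cup N(v_1)$ and $w\in N(u_2)\cup N(v_2)$ (the domination condition). The cost per graph is $O(|E|^2\cdot|V|)$, again trivial. If every one of the 21798 remaining graphs passes this test, the proposition is established.

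The only real obstacle is engineering correctness and reproducibility, not mathematics: the two predicates are straightforward, but care is needed to ensure the graph6 decoding agrees with the source database's indexing so that no graph is double-counted or skipped, and to confirm that the total $455344+21798=477142$ matches the published enumeration. I would cross-validate by re-running the same computation using two independent implementations (for instance one in Python via igraph, analogous to the code described in Appendix~\ref{app:implementation}, and one in C for speed), and verifying that the two classifications agree on every graph. Since the counts are exact integer quantities, any discrepancy between runs would pinpoint a bug; agreement between independent implementations is strong evidence of correctness. The resulting verified code, together with the source of the 477142 graphs, would be included in the repository accompanying the paper.
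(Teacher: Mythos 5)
Your proposal is correct and is essentially identical to the paper's verification: the paper's \verb|k8.py| performs exactly this exhaustive check over the 477142 graphs, using the same dominating-edge predicate and the same three hard-coded conditions for a connected dominating matching of two edges. The only practical detail worth noting is that the source database at \cite{ramseygraphs} stores the $\{K_3,\overline{K_8}\}$-free graphs, so one must complement each graph before testing, which your indexing/decoding caveat implicitly covers.
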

Code to verify this is available at \href{https://github.com/dcartermath/hc-forbidden-subgraphs}{https://github.com/dcartermath/hc-forbidden-subgraphs}. This code, \verb|k8.py|, simply checks each graph in turn, using the file containing such graphs available at \cite{ramseygraphs} (though note that the relevant file contains all $\{K_3,\overline{K_8}\}$-free graph on 27 vertices, so we must take the complement of them). The graph data is parsed using NetworkX version 2.8.3 (\cite{networkx}), but the graphs are immediately converted to igraph. The program outputs several lines of intermediate computation before eventually the line \verb|477142 / 477142 [455344, 21798]|, indicating that all 477142 graphs were read, 455344 have a dominating edge, and the remaining 21798 have a connected dominating matching of two edges.

The check for connected dominating matchings of two edges is simply hard-coded in; a pair of edges $uv,xy$ is a connected dominating matching if all of the following conditions are met:
\begin{itemize}
    \item $\{u,v\}$ and $\{x,y\}$ are disjoint.
    \item $\{u,v\}\cap N(x)\ne \varnothing$ or $\{u,v\}\cap N(y)\ne \varnothing$.
    \item For each $z\in V(G)\setminus \{u,v,x,y\}$, $z\in N(u)$ or $z\in N(v)$, and $z\in N(x)$ or $z\in N(y)$.
\end{itemize}

Theorem~\ref{thm:k8} follows from Proposition~\ref{prop:k8specific} due to Lemma~\ref{lem:matching} and the fact that HC-$\{\overline{K_3},K_7\}$ holds. The latter fact implies (similarly to the reasoning in Corollary~\ref{cor:k8bound}) that any counterexample to HC-$\{\overline{K_3}\}$ has at least 27 vertices. Thus the graphs considered in Proposition~\ref{prop:k8specific} either satisfy Hadwiger's Conjecture or are minimal counterexamples.

\end{document}